\pgfplotsset{compat = newest}
\definecolor{trueblue}{rgb}{0.0, 0.45, 0.81}
\definecolor{forestgreen}{rgb}{0.13, 0.55, 0.13}
\newcommand{\NNN}{\color{black}} 
\newcommand{\MMM}{\color{black}} 
\newcommand{\GGG}{\color{black}} 
\newcommand{\EEE}{\color{black}}
\newcommand{\AAA}{\color{black}}
\newcommand{\eps}{\varepsilon}
\theoremstyle{plain}
\newtheorem{theorem}{Theorem}[section]
\newtheorem{lemma}[theorem]{Lemma}
\newtheorem{example}[theorem]{Example}
\newtheorem{remark}[theorem]{Remark}
\newtheorem{proposition}[theorem]{Proposition}
\newtheorem{corollary}[theorem]{Corollary}
\theoremstyle{definition}
\newtheorem{definition}[theorem]{Definition}
\renewcommand{\tilde}{\widetilde}
\DeclareMathOperator{\dist}{dist}
\numberwithin{equation}{section}
\newcommand{\N}{\mathbb{N}}
\newcommand{\F}{\mathcal{F}}
\newcommand{\E}{\mathcal{E}}
\newcommand{\R}{\mathbb{R}}
\renewcommand{\S}{\mathbb{S}}
\renewcommand{\L}{\mathcal{L}}
\renewcommand{\H}{\mathcal{H}}
\newcommand{\x}{{\times}}
\newcommand{\Q}{\mathcal{Q}}
\begin{document}

\title[Derivation of Effective theories for rods with voids]{Derivation of effective theories for thin\\
3D nonlinearly elastic rods with voids}

\author[Manuel Friedrich]{Manuel Friedrich} 
\address[Manuel Friedrich]{Department of Mathematics, FAU Erlangen-N\"urnberg. Cauerstr.~11,
	D-91058 Erlangen, Germany, \& Mathematics M\"{u}nster,  
	University of M\"{u}nster, Einsteinstr.~62, D-48149 M\"{u}nster, Germany}
\email{manuel.friedrich@fau.de}
%\urladdr{https://www.math.fau.de/angewandte-mathematik-1/mitarbeiter/prof-dr-manuel-friedrich/}

\author{Leonard Kreutz}
\address[Leonard Kreutz]{Department of Mathematics, School of Computation, Information and Technology, 
Technical University of Munich,
Boltzmannstr. 3, 85748 Garching}
\email{leonard.kreutz@tum.de}

\author{Konstantinos Zemas}
\address[Konstantinos Zemas]{Institute for Analysis and Numerics, University of M\"unster\\
	Einsteinstrasse 62, D-48149 M\"unster, Germany}
\email{konstantinos.zemas@uni-muenster.de}

\begin{abstract}
We derive a dimension-reduction limit for a three-dimensional rod with material voids by means of $\Gamma$-convergence. Hereby, we generalize the results of the purely elastic setting \cite{Mora} to a framework of free discontinuity problems. The effective one-dimensional model features a classical elastic bending-torsion energy, but also accounts for the possibility that the limiting rod can be broken apart into several pieces or folded. The latter phenomenon can occur because of the persistence of voids in the limit, or due to their collapsing into a {discontinuity} of the limiting deformation or its derivative. The main ingredient in the proof is a novel rigidity estimate in varying domains under vanishing curvature regularization, obtained in \cite{KFZ:2021}.

\end{abstract}

\maketitle

\section{Introduction}\label{introduction}
A fundamental question in continuum mechanics is the rigorous derivation of lower dimensional theories for plates, shells, and rods in various energy scaling regimes, starting from three-dimensional models of nonlinear elasticity. Although this question has received considerable attention \cite{Antman1, Antman2}, early derivations were typically based on some a priori \textit{ansatzes}, often leading to theories which were not consistent with each other. The last decades, however, have witnessed a remarkable progress in the rigorous derivation of effective energies for thin elastic objects via variational methods, based on a fundamental cornerstone: the celebrated rigidity estimate by {\sc G.~Friesecke, R.D.~James}, and {\sc S.~M\"uller}   \cite{friesecke2002theorem}. Ever since its appearance, this rigidity result has had numerous applications in dimension-reduction problems providing  a thorough understanding of thin elastic  materials. We refer the reader to the by far nonexhaustive list \cite{Abels-Mora-Muller, Braun-Schmidt, ContiDolzmann, DelgadoSchmidt, MFMK2, lennart1, lennart2,  Mora3, friesecke2002theorem, hierarchy, HornungNeukammVelcic, LewickaLucic, LewickaMahadevanPakzad, Mora4, Mora, Mora2, MoraMullerSchultz, MullerPakzad, NeukammVelcic, schmidt_atomistic_dimension_reduction, schmidt_multilayers} \GGG for references\EEE.  

 On the contrary,  beyond the purely elastic regime, when one is interested in the behavior of materials which might have defects and impurities such as \emph{plastic slips}, \textit{cracks}, or \textit{stress-induced voids}, the   situation is far less-well understood. The goal of this article is to advance the mathematical understanding of thin materials with voids. This  corresponds to the investigation of  energies that are driven by the competition between elastic and surface energies of  perimeter type. Models of this form are gathered under the term  \textit{stress driven rearrangement instabilities} (SDRI), see \cite{BonCha02, BraChaSol07, CrismaleFriedrich, FonFusLeoMil11, GaoNix99, Grin86, Grin93, KhoPio19,  KrePio19, SantiliSchmidt-old,  SieMikVoo04, Spe99} for some mathematical and physical literature on the subject.

We start with a short overview of the literature on dimension reduction in settings beyond elasticity.   Concerning plasticity, we refer the reader, e.g., to \cite{elisa2, elisa1, elisa3, liero-mielke, maggiani}. For models in brittle  fracture   \cite{francfort}, there are several results on brittle plates and shells in a linear setting  \cite{Almi-etal, almitasso,  Baba-dim2, ginsglad}.  In the nonlinear framework, instead, the theory is  mainly restricted to static and evolutionary models in the membrane regime \cite{solo-mem,  Baba-dim, Braides-fonseca}. The only result in a smaller energy regime appears to be   \cite{schmidt2017griffith} for the case of a two-dimensional thin brittle beam. In the limit of vanishing thickness,  the author obtains an effective \textit{Griffith-Euler-Bernoulli} energy defined on the midline of the possibly fractured beam, accounting also for jump discontinuities of the limiting deformation and its derivative. At the core of the arguments in \cite{schmidt2017griffith} lies a suitable generalization of \cite{friesecke2002theorem}, namely a \textit{quantitative piecewise geometric rigidity theorem for SBD functions}   \cite{friedrich_rigidity}. As to date this result is available only in two dimensions, the generalization of dimension-reduction results  %problems
to  three-dimensional fracture is still impeded. Let us however mention that analogous rigidity results in higher dimensions  are available in models for nonsimple materials \cite{friedrich_nonsimple}, where the elastic energy depends additionally on the second gradient of the deformation.
 
In the setting of material voids, a recent result \cite{SantiliSchmidt2022} deals with the derivation of a plate theory in the  bending energy regime. There  the analysis  is limited to voids with restrictive assumptions on their geometry, still allowing to resort to the classical rigidity theorem of \cite{friesecke2002theorem}. Our goal is to derive a related result for thin rods without  restriction on the void geometry. The cornerstone of our approach is a novel rigidity result in the realm of SDRI-models \cite{KFZ:2021}, based on a curvature regularization of the surface term. We now describe our setting in more detail.

We consider a three-dimensional thin rod with reference configuration $\Omega_h  = (0,L)\times h S\subset \R^3$ of thickness $0<h\ll 1$, for a cross section $S \subset \R^2$. For simplicity of the exposition we focus on the case $S= (-\frac{1}{2},\frac{1}{2})^2$, but mention that adaptations to more general geometries are possible. From a variational viewpoint, models describing the formation of material voids in  thin rods fall into the framework of \textit{free discontinuity problems} \cite{Ambrosio-Fusco-Pallara:2000}, and typical energies take the form
\begin{equation}\label{typical_energies_1}
\mathcal F^h_{\rm{el, per}}(v,E):=\int_{\Omega_h\setminus \overline{E}} W(\nabla v)\,\mathrm{d}x+\beta_h\int_{\partial E\cap \Omega_h}\varphi(\nu_E)\,\mathrm{d}\H^2\,.
\end{equation}
Here, $E \subset \Omega_h$ represents the (sufficiently regular) void set within an elastic rod with reference configuration $ \Omega_h \subset \R^3$, and $v$ is the corresponding elastic deformation. The first part of \eqref{typical_energies_1} represents the nonlinear elastic energy with density $W$ (see Section \ref{model_main_result} for details), whereas the second one depends on a parameter $\beta_h>0$ and on
a possibly anisotropic density $\varphi$ evaluated at the outer unit normal $\nu_E$ to $ \partial E\cap\Omega_h$. For purely expository reasons, we will restrict ourselves to the isotropic case, i.e., $\varphi(\cdot)=|\cdot|_{2}$. 

Regarding the energy scaling, at a heuristic level, it is well known that elastic energies of the order $h^4$ correspond to bending and torsion, keeping the midline unstretched, cf. \cite{Mora}. At the same time, the surface area of voids completely separating the rod is %are
of order $h^2$. Now, depending on the choice of $\beta_h$, different limiting models can be expected: the case $\beta_h \gg h^2$ will result in a purely elastic rod model, whereas the case $\beta_h \ll h^2$ will result in a model of purely brittle fracture.   The critical regime $\beta_h \sim h^2$ is the most interesting and mathematically most challenging case, for the elastic and surface contributions are of the same order.

Therefore, we set $\beta_h:=h^2$ from now on. Rescaling the energy in \eqref{typical_energies_1} by $h^{-4}$, the natural attempt would be to rigorously derive a corresponding effective one-dimensional theory by means of $\Gamma$-convergence   \cite{Braides:02, DalMaso:93}. However, the presence of a priori unprescribed voids in the model hinders the use of the classical rigidity result of \cite{friesecke2002theorem}. Indeed, the voids might possibly exhibit extremely complicated geometries  such as densely packed thin spikes or microscopically small components  with small surface measure on different length scales,  see Figure~\ref{fig:spikes}.

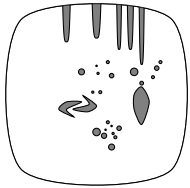
\begin{figure}[htp]
\begin{tikzpicture}
		
\draw[clip] plot [smooth cycle] coordinates {(0,0)(2,0)(2,2)(0,2)};
		
\begin{scope}[shift={(1,1.75)},scale=.25]
\draw[fill=gray, domain=-.5:.5, smooth, variable=\x, xscale=.5,yscale=1] plot ({\x}, {40*\x*\x*\x*\x});
			
\end{scope}
		
\begin{scope}[shift={(.6,1.7)},scale=.2]
\draw[fill=gray, domain=-.8:.8, smooth, variable=\x, xscale=.5,yscale=1] plot ({\x}, {40*\x*\x*\x*\x});
			
\end{scope}
		
\begin{scope}[shift={(1.6,1.4)},scale=.1]
\draw[fill=gray, domain=-1.8:1.8, smooth, variable=\x, xscale=.5,yscale=1] plot ({\x}, {40*\x*\x*\x*\x});
			
\end{scope}
		
\begin{scope}[shift={(1.3,1.6)},scale=.1]
\draw[fill=gray, domain=-1.8:1.8, smooth, variable=\x, xscale=.5,yscale=1] plot ({\x}, {40*\x*\x*\x*\x});
			
\end{scope}

\begin{scope}[shift={(1,.85)},scale=.1,rotate=170]
			
\draw [fill=gray] plot [smooth cycle] coordinates {(0,0) (1,1) (3,1) (1,0) (2,-1)};
			
\end{scope}
		
\begin{scope}[shift={(.5,.8)},scale=.1]
			
\draw [fill=gray] plot [smooth cycle] coordinates {(0,0) (1,1) (3,1) (1,0) (2,-1)};

\draw [fill=gray] plot [smooth cycle] coordinates {(10,2) (11,3) (12,1) (11,-2) (10,0)};

\end{scope}
		
\begin{scope}[shift={(-.2,.8)}]
			
\draw[fill=gray](1.2,.58) circle(.01);
\draw[fill=gray](1.22,.48) circle(.015);
\draw[fill=gray](1,.5) circle(.04);
\draw[fill=gray](1.4,.45) circle(.03);
\draw[fill=gray](1.25,.23) circle(.02);
			
\draw[fill=gray](1.8,.35) circle(.025);
\draw[fill=gray](1.15,.23) circle(.015);

\draw[fill=gray](1.35,.63) circle(.02);
			
\end{scope}

\begin{scope}[shift={(.5,.8)}]

\draw[fill=gray](1,.5) circle(.05);
\draw[fill=gray](1.3,.55) circle(.03);
\draw[fill=gray](1.25,.43) circle(.02);
\draw[fill=gray](1.35,.63) circle(.02);
			
\end{scope}

\begin{scope}[shift={(1.45,1.6)},scale=.14]
\draw[fill=gray, domain=-1.8:1.8, smooth, variable=\x, xscale=.5,yscale=1] plot ({\x}, {40*\x*\x*\x*\x});
			
\end{scope}
		
\draw plot [smooth cycle] coordinates {(0,0)(2,0)(2,2)(0,2)};

\draw[fill=gray](1,.5) circle(.05);
\draw[fill=gray](1.2,.3) circle(.04);
\draw[fill=gray](1.1,.45) circle(.035);
\draw[fill=gray](1.3,.55) circle(.03);
\draw[fill=gray](1.25,.43) circle(.02);
\draw[fill=gray](1.15,.63) circle(.02);
\draw[fill=gray](1.12,.53) circle(.01);
\draw[fill=gray](1.2,.58) circle(.01);
\draw[fill=gray](1.22,.48) circle(.015);

\end{tikzpicture}
\caption{Densely packed thin spikes and microscopically small components leading to loss of rigidity. For simplicity, the figure illustrates and example in dimension two.}
\label{fig:spikes}
\end{figure}

As a remedy, motivated by our work in \cite{KFZ:2021}, we introduce a \textit{curvature regularization}  of the form
\begin{equation}\label{vanishing_curvature_term}
\F^h_{\rm{curv}}(E):= h^2\kappa_h \int_{\partial E\cap \Omega_h}|\bm{A}|^{2}\,\mathrm{d}\H^2\,,
\end{equation}
where  $\bm{A}$ denotes the second fundamental form of $\partial E\cap \Omega_h$ and   $\kappa_h$ satisfies \eqref{rate_1_gamma_h}, which allows in particular for $ \kappa_h\to 0^{+}$ as $h\to 0^{+}$ at a sufficiently slow rate. The presence of such an extra Willmore-type energy  penalization allows  to employ the \textit{piecewise rigidity estimate} \cite[Theorem 2.1]{KFZ:2021} in the analysis.  It   is a \textit{singular perturbation} for the void set $E$ and not for the deformation $v$, i.e., no higher-order gradient of $v$ is involved in the model.  We   refer the reader to our recent work \cite{KFZ:2022}, where a related discrete model is studied and an additional explanation for the presence of a microscopic analogue of the term in \eqref{vanishing_curvature_term} is given, see \cite[Subsection 2.5]{KFZ:2022}. We also mention that  curvature regularizations are widely used in the mathematical and physical literature of  SDRI models, including the description of  elastically stressed thin films or  material voids, see \cite{AnGurt,  FonFusLeoMor14, FonFusLeoMor15, GurtJabb, Herr,   voigt,  SieMikVoo04}. \NNN In spite of possible modeling relevance, we emphasize that we include the curvature contribution in our model only for mathematical reasons as a regularization term. In particular, it does not affect the effective limiting problem. \EEE

The total energy of a pair $(v,E)$ is then given by the sum of the two terms in \eqref{typical_energies_1} and \eqref{vanishing_curvature_term}, i.e.,  
\begin{equation}\label{typical_full_energy}
\mathcal F^h(v,E)=\mathcal F^h_{\rm{el, per}}(v,E)+\mathcal F^h_{\rm{curv}}(E)\,.
\end{equation}
(We set  $\beta_h = h^2$ and  $\varphi(\nu)\equiv 1$ for all $\nu\in \S^2$.) The main result of this contribution is then Theorem \ref{main_gamma_convergence_thm}, where we show that the rescaled energies $(h^{-4}\F^h(\cdot,\cdot))_{h>0}$ $\Gamma$-converge (in an appropriate topology) to an effective one-dimensional functional that takes the form 
\begin{equation}\label{limiting_energy_intro}
\frac{1}{2}\int_{(0,L)\setminus I}\Q_2(R^TR')\,\mathrm{d}x_1+\H^0(\partial I \cap (0,L))+2\H^0\big((J_y\cup J_R)\setminus \partial I\big)\,. 
\end{equation}
Here, $I\subset (0,L)$ denotes a union of finitely many intervals in $(0,L)$   and represents the void part in the limiting one-dimensional rod.  The deformation $y\colon(0,L) \to  \R^3$ is an isometric piecewise $W^{2,2}$-regular curve that represents the deformed rod. The rotation field $R\colon(0,L) \to SO(3)$ whose first column is the velocity $y'$ represents the \textit{Frenet frame} with respect to $y$. The  elastic part of the limiting energy corresponds to the one identified in the purely elastic setting \cite{Mora}: it is quadratic in terms of the skew-symmetric tensor $R^TR'$ which encodes the information for the \textit{curvature and torsion} of the curve $y$. The associated quadratic form $\Q_2$ is defined through the quadratic form $D^2W(I)$ of linearized elasticity via a suitable minimization problem, see \eqref{def_Q_2} for details. The second term in \eqref{limiting_energy_intro} accounts for the presence of voids by counting their endpoints ($\H^0$ stands for the counting measure in $\R$). The last term therein takes into account the fact that, in the limit, voids might collapse exactly into discontinuity points of the limiting $y$ or its Frenet frame $R$, corresponding to cracks or kinks of the limiting rod, respectively. Accordingly, these discontinuity points should be counted twice in the energy.

Let us highlight the relation to the result in \cite{SantiliSchmidt2022}, where a similar model of \textit{Blake-Zisserman type} (cf. \cite{Blake-Zisserman, Carriero-Leaci-BZ}) for elastic plates with voids in  the Kirchhoff bending energy regime is obtained. First, in \cite{SantiliSchmidt2022} plates are considered, whereas we treat the case of rods.  We decided to present our approach based on the model \eqref{typical_full_energy}  first  for a dimension reduction from 3D-to-1D to avoid some technicalities arising in the 3D-to-2D analysis. The latter, however, can be performed as well, and is the subject of  a forthcoming work,  both in the  Kirchhoff \cite{friesecke2002theorem} and the {von K{\'a}rm{\'a}n} \cite{hierarchy} regime. The fundamental  difference between our work and \cite{SantiliSchmidt2022} concerns the assumptions on the void set. Whereas we allow for voids with general geometry employing a mild curvature regularization, \cite{SantiliSchmidt2022} is based on  specific restrictive assumptions on the void geometry, namely the  so-called \textit{$\psi$-minimal droplet assumption}, cf.~\cite[Equation (6)]{SantiliSchmidt2022}. This can be interpreted as an $L^{\infty}$-diverging bound on the curvature of the boundary of the voids. In our setting, the curvature regularization term in \eqref{typical_full_energy} can be thought of as imposing an $L^2$-diverging bound on the curvature: firstly, this allows the void set to concentrate at arbitrarily small scales (independently  of $h$) and, secondly,  allows the boundary of the void set to consist of a diverging (with $h$) number of connected components, see Example~\ref{example:scaleandcardinality}. Our more general model comes at the expense of the necessity of more sophisticated geometric rigidity results \cite{KFZ:2021} compared to \cite{friesecke2002theorem}.  

\subsection{Organization of the paper and proof strategy}\label{plan_of_paper} 
The paper is organized as follows. In Section \ref{model_main_result} we introduce  our model  and state the main compactness and $\Gamma$-convergence results, i.e., Theorems \ref{compactness_thm} and \ref{main_gamma_convergence_thm}, respectively.  \NNN We also include some comments on the limiting model and discuss possible boundary value problems. \EEE Section \ref{preparatory_modifications} contains the core of our paper by deriving a \NNN blockwise \EEE Sobolev approximation of sequences $(v_h,E_h)_{h>0}$ with 
$$\sup_{h>0}h^{-4}\mathcal{F}^h(v_h,E_h)<+\infty\,.$$
We perform a careful  enlargement of the voids $E_h$ according to \cite[Proposition 2.8]{KFZ:2021}, as well as an appropriate modification of the deformations. This is the content of Propositions~\ref{Lipschitz_replacement} and \ref{prop: 2nd main} stated  \GGG at the beginning of Section \ref{preparatory_modifications}\EEE, where we modify the deformations $v_h$ and their gradients $\nabla v_h$ on a small part of the rod, such that  the new deformations  are actually Sobolev in \NNN big blocks \EEE of the rod $\Omega_h$ with a good control on their elastic energy. Moreover, the modification is done in such a way that the jump height of the new sequence along the entire rod is suitably controlled \GGG as well as \EEE producing the correct jump points of the limiting deformation and its curvature-torsion tensor. 

The main technical tools to obtain these modifications are the \textit{piecewise rigidity estimate}  \cite[Theorem 2.1]{KFZ:2021} and a \textit{Korn inequality for functions with small jump set} \cite{Cagnetti-Chambolle-Scardia}, applied on long cuboids that partition $\Omega_h$.   More precisely, splitting the rod $\Omega_h$ into $\sim h^{-1}$ many long cuboids of length~$\sim h$, we focus on those cuboids where the perimeter of the enlarged void is locally not large enough to produce macroscopic fracture, see \eqref{good_cuboids}--\eqref{eq: ugly}. In these cuboids, by means of \textit{isoperimetric arguments}  and our piecewise rigidity estimate, we obtain large in volume sets in which slight modifications of $v_h$ are approximately $W^{1,2}$-rigid in terms of the local elastic energy. As we believe that the isoperimetric inequalities may be of independent interest, we state and prove them in arbitrary space dimension, see  Subsection~\ref{sec: isoperimetric}.

Although \cite[Theorem 2.1]{KFZ:2021} provides an optimal estimate only in terms of the symmetrized gradient, a use of the Korn-Poincar{\' e} inequality for functions with small jump set \cite{Cagnetti-Chambolle-Scardia} allows us to upgrade our estimate to the full gradient in all but finitely many cuboids. This leads to an optimal estimate for the difference between the rigid motions in terms of the local elastic energy, again in all but finitely many adjacent cuboids, see Proposition \ref{summary_estimates_proposition} and Corollary \ref{difference_rigid-motions} in Subsection \ref{sec: locest}. In Subsection \ref{sec: global_constructions_proofs}, we eventually construct the global \NNN blockwise \EEE Sobolev modifications and give the proofs of Propositions \ref{Lipschitz_replacement} and \ref{prop: 2nd main}.
  
Based on these preparations, the rest of the paper is more standard and the results  of the elastic case \cite{Mora}  can be employed directly.  Section \ref{compactness} is devoted to the proof of  compactness (Theorem~\ref{compactness_thm}) and Section \ref{gamma_liminf} to the proof of the $\Gamma$-liminf inequality of Theorem \ref{main_gamma_convergence_thm}. The proof of the $\Gamma$-limsup inequality is given in Section \ref{gamma_limsup}  by  exhibiting a recovery sequence $(v_h,E_h)_{h>0}$. Here, we use  the corresponding recovery sequence from \cite{Mora} for the deformations, and we construct the voids $E_h$ with planar interfaces in order to approximate the one-dimensional limiting void sets and the jump points.

We also remark that, from a technical viewpoint, our proof strategy provides --  to our view -- a simplified alternative to obtain the $\Gamma$-liminf inequality compared to the methods used in \cite{schmidt2017griffith}, which were based on delicate interpolation and difference quotients estimates. The latter were dictated by the fact that, in the same fashion  as the result \cite{KFZ:2021}, the two-dimensional piecewise rigidity estimate in $SBD$ \cite[Theorem 2.1]{friedrich_rigidity}   used in \cite{schmidt2017griffith}  provides  an optimal estimate in terms of the elastic energy \emph{only} for symmetrized gradients. Therefore, the standard difference quotients method used in \cite{Mora} was not directly applicable. Our method instead leads to a \NNN blockwise \EEE Sobolev replacement with the aid of the \textit{Korn inequality  for functions with small jump set} \cite{Cagnetti-Chambolle-Scardia}. This  actually enables us to use directly the results of \cite{Mora}. We emphasize that in this regard our approach is  general: given any kind of geometric rigidity result delivering a sharp control for symmetrized gradients, e.g.\ also the result in \cite{friedrich_nonsimple}, our techniques carry directly over and allow to work  with \NNN blockwise \EEE Sobolev replacements. \EEE  

\subsection{Notation}\label{Notation}
We close the introduction with some basic notation. Given $U\subset \R^3$ open, we denote by $\mathcal{P}(U)$ the collection of subsets of finite perimeter in $U$. Given $E\in \mathcal{P}(U)$, for any $s\in[0,1]$ we denote by $E^s$ the set of points with 3-dimensional density $s$ with respect to $E$, and by $\partial^*E$ its essential boundary, see \cite[Definition~3.60]{Ambrosio-Fusco-Pallara:2000}. The family of sets of finite perimeter on a one-dimensional interval $(0,L)$ will be simply denoted by $\mathcal{P}(0,L)$. We also denote by $\mathcal{A}_{\rm reg}(U)$ the collection of all open sets $E \subset U$ such that $\partial E \cap U $ is a two-dimensional $C^2$-surface in $\mathbb{R}^3$. Surfaces and functions of $C^2$-regularity will be called \NNN $C^2$-regular \EEE in the following. For $E \in \mathcal{A}_{\rm reg}(U)$ we denote by $\bm{A}$ the second fundamental form of $\partial E\cap U$, i.e., $|\bm{A}| = \sqrt{\kappa_1^2 + \kappa_2^2}$, where $\kappa_1$ and $\kappa_2$ are the corresponding principal curvatures.  By $\nu_{E}$ we indicate the outer unit normal to  $\partial E\cap U$.  For every $a,b\in \R$ we denote   $a\wedge b:=\min\{a,b\}$ and $a\vee b:=\max\{a,b\}\,.$

For $p\in [1,\infty]$ and $d, k \in \N$ we denote by $L^p(U;\R^d)$ and $W^{k,p}(U;\R^d)$ the standard Lebesgue and Sobolev spaces, respectively.  Partial derivatives of a function $f\colon U \to \R^3$ will be denoted by $(f_{,i})_{i=1,2,3}$. Given measurable sets $A, B$, we write $\chi_A$ for the characteristic function of $A$, $A \subset \subset B$ if $\overline{A} \subset B$, and $\mathrm{dist}_{\H}(A,B)$ for the Hausdorff distance between $A$ and $B$. For $d,k\in \N$, we denote by $\mathcal{L}^d$ and $\mathcal{H}^{k}$ the $d$-dimensional Lebesgue measure and the $k$-dimensional Hausdorff measure, respectively. %\AAA The half-open cube of side-length $\rho>0$ centered at $x\in \R^3$ will be denoted by 
%$$Q_{\rho}(x):=x+\rho[-\tfrac{1}{2},\tfrac{1}{2})^{3}\,.$$ \EEE

We set $\R_+ := [0,+\infty)$. By ${\rm id}$ we   denote the identity mapping on $\R^3$ and by ${\rm Id} \in \R^{3\times 3}$  the identity matrix.  For each $F \in \R^{3 \times 3}$ we let $${\rm sym}(F) := \frac{1}{2}\left(F+F^T\right)\,,$$
and we also define 
 $$SO(3) := \lbrace F\in\R^{3 \times 3}\colon F^TF = {\rm Id}, \, \det F = 1\rbrace\,.$$ 
Moreover, we denote by $\R^{3\times 3}_{\rm sym}$ and $\R^{3 \times 3}_{\rm skew}$ the space of symmetric and skew-symmetric matrices, respectively. We further write $\mathbb{S}^2:= \lbrace \nu \in \R^3\colon \, |\nu|=1\rbrace$. For $\sigma>0$, we denote by $T_\sigma$ the linear transformation in $\R^3$ with matrix representation being given by 
\begin{equation}\label{anisotropic_dilation}
T_\sigma:=\mathrm{diag}(1,\sigma,\sigma)
\end{equation} 
with respect to the canonical basis $\{e_1,e_2,e_3\}$. 
 
We use standard notation for $SBV$-functions, cf. \cite[Chapter~4]{Ambrosio-Fusco-Pallara:2000} for the definition and a detailed presentation of the properties of this space. In particular, for a function $u\in SBV(U;\R^d)$,  we write $\nabla u$ for the approximate gradient, $J_u$ for  the jump set, and $u^{\pm}$ for the one-sided traces on $J_u$. We also use the notation $[u]:=u^+-u^-$ for the corresponding jump height. We consider the space
\begin{equation*}
%\label{SBV2}
SBV^2(U;\R^d):=\Big\{u\in SBV(U;\R^d)\colon \int_{U}|\nabla u|^2\,\mathrm{d}x+\H^{d-1}(J_u\cap U)<+\infty\Big\}\,.
\end{equation*}
In dimension one, given $a<b\in\R$ and $d\in \N$, the space $SBV^2((a,b);\R^d)$ coincides with the space \NNN $P$-$W^{1,2}((a,b);\R^d)$ of piecewise $W^{1,2}$\EEE-Sobolev functions, which  consists of those $Y\in L^1((a,b); \R^d)$ for which there exists a partition  
$${a =: t_0<t_1<\dots<t_m<t_{m+1} := b \  \ \ \text{  such that } \ \ \ \NNN Y\in W^{1,2}\EEE((t_{i-1},t_i);\R^d) \ \forall i=1,\dots,m+1\,.}$$ 
The jump set of $Y$ is precisely the minimal set $J_Y = \{t_1, \dots,t_m\}$ with the above property. By taking an appropriate representative, we may then assume that $Y$ is uniformly continuous on $\{(t_{i-1}, t_i)\}_{i=1,\dots,m+1}$ and $Y(t_i^{\pm})$ are the limits of $Y(t)$ as $t\to t_i^{\pm}$. 

Analogously, for $k\in \N$, we define $P$-$\NNN W^{k,2}\EEE((a,b);\R^d)$ as the space of $Y\in L^1((a,b);\R^d)$ for which there exists $\{a =:t_0 < t_1<\dots< t_{m+1}:= b\}$ such that $Y\in \NNN W^{k,2}\EEE((t_{i-1}, t_i); \R^d)\ \forall i=1,\dots,m+1$. For $Y$ in this space, the
minimal set $\{t_1,\dots,t_m\}$ with the above property is $\bigcup_{l=0}^{k-1} J_{Y^{(l)}}$\NNN, where $Y^{(l)}$ denotes the $l$-th derivate of $Y$.\EEE

\section{The model and the main results}\label{model_main_result}

\textbf{Model in the reference domain:} 
We denote the reference configuration of the thin rod by  
\begin{align}\label{eq: Omegah}
\Omega_{h}:=(0,L)\times (-\tfrac{h}{2},\tfrac{h}{2})^2\subset \R^3\,,
\end{align}
where $L>0$ is a macroscopic parameter describing the length of its midline, and $0<h\ll L$ denotes its infinitesimal thickness.   For a fixed large constant $M\gg 1$, the set of \emph{admissible pairs} of function and set is given by 
\begin{equation}\label{initial_admissible_configurations}
\mathcal{A}_h:=\left\{(v,E)\colon\ \ E\in \mathcal{A}_{\rm reg}(\Omega_h),\ v\in \NNN W^{1,2}\EEE(\Omega_h\setminus \overline{E};\R^3)\,,\ v|_E\equiv\mathrm{id}\,,\ \|v\|_{L^\infty(\Omega_h)} \le   M\right\}\,.
\end{equation}
The third condition in \eqref{initial_admissible_configurations} is for definiteness only. The last one is merely of technical nature to ensure compactness. At the same time, it is also justified from a physical point of view, for it corresponds to the assumption that the material under investigation is confined in a bounded region. For each pair $(v,E) \in \mathcal{A}_h$, \EEE   we consider the energy
\begin{align}\label{initial_energy}
\F^{h}(v,E):= 
\int_{\Omega_h\setminus \overline{E}} W(\nabla v)\,\mathrm{d}x+h^2  \H^2(\partial E\cap \Omega_h)+  h^2\kappa_h  \int_{\partial E\cap \Omega_h}|\bm{A}|^2\,\mathrm{d}\mathcal{H}^2\,.
\end{align}
Here, the first and second term correspond to the \emph{elastic} and the \emph{surface energy} of perimeter type, while the third term is a \emph{curvature regularization} of Willmore-type, where $\bm A$ denotes the second fundamental form of $\partial  E\cap \Omega_h$ and $\kappa_h$ is a suitable parameter. The factor $h^2$ in front of the surface terms ensures that the elastic and the surface energy are of same order for our choice of  the bending regime, where the elastic energy per unit volume is of order $h^2$. We refer to the introduction for more details.

The function $W \colon \mathbb{R}^{3\times 3}\to\R_+$ in \eqref{initial_energy} represents the \emph{stored elastic energy density}, satisfying the usual assumptions of nonlinear elasticity. Altogether, we suppose that $W\in C^0(\R^{3\times 3};  \R_+ )$ satisfies 
\begin{align}\label{eq: nonlinear energy}
\begin{split}
\rm{(i)} & \ \  \text{Frame indifference: $W(RF) = W(F)$ for all $R \in SO(3)$ and $F\in \mathbb{R}^{3\times 3}$}\,,\\ 
\rm{(ii)} & \ \ \text{Single energy-well structure:}\ \{W=0\} \equiv SO(3)\,,\\
\rm{(iii)} & \ \ \text{Regularity:}  \ \ \text{$W$ is $ C^2 $  in a neighborhood of $SO(3)$}\,,\\
\rm{(iv)} & \ \ \text{Coercivity:}  \ \  \text{There exists $c>0$ such that for all $F \in \mathbb{R}^{3\times 3}$ it holds that} \\ 
& \quad   \quad \quad  \quad \quad \quad \, W(F) \geq c\, \mathrm{dist}^2(F,SO(3))\,. 
\end{split}
\end{align} 
Our choice of an isotropic surface energy is for simplicity only and can be generalized, as we briefly explain in Remark \ref{choice_of_model} below. As for the parameter $\kappa_h>0$ in the curvature regularization, we require  
\begin{equation}\label{rate_1_gamma_h}
\kappa_h  h^{\AAA-52/25} \to +\infty \quad \text{ as $h\to 0$}\,. 
\end{equation}
\NNN We point out that \eqref{rate_1_gamma_h} is a technical assumption and chosen for simplicity rather than optimality. \EEE Its role is connected to the application of suitable rigidity results  \cite{KFZ:2021, Cagnetti-Chambolle-Scardia} and will become apparent along the proof, see in particular \eqref{parameters_for_uniform_bounds}. \EEE

\textbf{Rescaling of the model:} As it is customary in dimension-reduction problems, we perform a change of variables to a fixed reference domain: recalling \eqref{anisotropic_dilation}, we rescale our variables and set 
\begin{equation}\label{order_1_domain}
\Omega:=\Omega_1\,, \quad V:=\{x\in \Omega: (x_1,hx_2,hx_3)\in E\}=T_{1/h}(E)\,.
\end{equation}
We also rescale the deformations accordingly, by defining  $y\colon\Omega\to \R^3$ via
\begin{equation}\label{from_v_to_y}
y(x):=y(x_1,x_2,x_3):=v(x_1,hx_2,hx_3)\,.
\end{equation}
We rescale  the energy by the factor $h^4$ and set 
\begin{equation}\label{rescaled_energy}
\mathcal{E}^{h}(y,V):=h^{-4}\F^{h}(v,E)\,,
\end{equation}
where the pair $(y,V)$ is related to $(v, E)$ via \eqref{order_1_domain}--\eqref{from_v_to_y}. Here,  one  factor $h^2$  corresponds to the change of volume  and the other factor $h^2$  corresponds to the average elastic energy per unit volume, reflecting our choice of the bending energy regime.

For the corresponding rescaled gradients, we will use the notation
\begin{equation}\label{rescaled_deformation_gradient}
\nabla_hy(x):=\Big(\partial_1 y,\frac{1}{h}\partial_2 y, \frac{1}{h}\partial_3y\Big)(x)=\nabla v(x_1,hx_2,hx_3)\,. 
\end{equation}
Therefore,  by a change of variables  we find  
\begin{align}\label{eq: newenergy}
\mathcal{E}^{h}(y,V) = h^{-2}\int_{\Omega\setminus \overline{V}} W(\nabla_hy(x))\,\mathrm{d}x+\int_{\partial V\cap \Omega}\big|\big(\nu^1_{V}(z), h^{-1}\nu^2_{V}(z), h^{-1}\nu^3_{V}(z)\big)\big|\, \mathrm{d}\H^2(z) + \mathcal{E}^h_{\rm  curv}(V) \,,
\end{align}
where $\nu_{V}(z):=\big(\nu^1_{V}(z),\nu^2_{V}(z), \nu^3_{V}(z)\big)$ denotes the outer unit normal to $\partial V\cap \Omega$ at the point $z$. (For the rescaling of the perimeter %surface
part, one can test with smooth functions and use the divergence theorem.) \EEE Here, the term $\mathcal{E}^h_{\rm  curv}(V)$ denotes the curvature contribution for the rescaled set $V$, for which we refrain from performing the change of variables explicitly.

In view of \eqref{anisotropic_dilation} and  \eqref{initial_admissible_configurations}, the space of rescaled admissible pairs (deformations-voids) is given by
\begin{equation}\label{admissible_configurations_h_level}
\hat{\mathcal{A}}_h:=\{(y,V)\colon V\in \mathcal{A}_{\mathrm{reg}}(\Omega)\,,\ y\in \NNN W^{1,2}\EEE(\Omega\setminus \overline{V}; \R^3)\,, \ y|_{V}\equiv T_h(\mathrm{id})\,, \ \|y\|_{L^\infty(\Omega)}\leq M\}\,.
\end{equation}

\textbf{Limiting model:}  The  limiting energy will be defined on the space
\begin{equation}\label{limiting_admissible_pairs}
\begin{split}
\hspace{-0.5em}\mathcal{A}&:=\big\{\big((y\vert \, d_2\vert \, d_3),I\big) \colon  (y\vert \, d_2 \vert \,  d_3)\in SBV^2_{\mathrm{isom}}(0,L)\,,\,     y|_I(x_1) \equiv x_1 %\mathrm{id}
\,,\  (y_{,1}\vert \, d_2\vert \, d_3)|_{I} \equiv \mathrm{Id}\,, \\
&\quad  \quad \quad  \quad \quad  \quad \quad  \quad \quad  \quad \quad  \quad \quad  \quad \quad  \quad \quad \quad  \quad \quad  \ \ \ \ \ \  \|y\|_{L^\infty(\Omega)}\leq M\,, \,  I\in \mathcal{P}(0,L)
\big\}\,,
\end{split}
\end{equation}
where, recalling the definition of \NNN $P$-$W^{k,2}$ \EEE in Subsection \ref{Notation}, we define 
\begin{equation}\label{SBV_2_isom}
SBV^2_{\mathrm{isom}}(0,L):=\begin{array}{lr} \Big\{
 ({y}\vert \,  {d_2}\vert \, {d_3})\in \NNN\big(P\text{-}W^{2,2}\times P\text{-}W^{1,2}\times P\text{-}W^{1,2}\big)\EEE\big((0,L);\R^{3\times 3}\big)\, \text{with }\\[5pt]
\ \   R:=({y}_{,1}\vert \, {d_2}\vert \, {d_3})\in SO(3) \text{ a.e. in } (0,L)\Big\}\,.
\end{array}
\end{equation}
By a slight abuse of notation, for triplets $(\bar{y}\vert\, \bar{d}_2\vert\,\bar{d}_3)\colon \Omega  \to \R^{3\times 3} $  we will also use the notation $(\bar{y}\vert\,  \bar{d}_2\vert\, \bar{d}_3) \in SBV^2_{\mathrm{isom}}(0,L)$ if and only if
\begin{align}\label{eq: convention1}
(\bar{y}\vert\, \bar{d}_2\vert\,\bar{d}_3)(x)=(y\vert \, d_2\vert \, d_3)(x_1) \ \text{for all $x \in \Omega$, for some $(y\vert \, d_2\vert \, d_3) \in SBV^2_{\mathrm{isom}}(0,L)$}\,. 
\end{align} 
In a similar fashion, we will write
\begin{align}\label{eq: convention2}
\bar R(x) := ( y_{,1}\vert \, d_2\vert \, d_3)( x_1) \  \text{for all $x \in \Omega$}\,. 
\end{align}
With these definitions, for each $((y\vert \, d_2\vert \, d_3),I)\in \mathcal{A}$, the limiting one-dimensional energy of Blake-Zisserman type (cf. \cite{Blake-Zisserman, Carriero-Leaci-BZ, SantiliSchmidt2022} for analogous models in different settings) is defined as
\begin{equation}\label{limiting_one_dimensional_energy}
\mathcal{E}^{0}\big((y|\, d_2|\, d_3),I\big):=
\frac{1}{2}\int_{(0,L)\setminus I}\Q_2({R}^T{R}_{,1})\,\mathrm{d}x_1+\H^0\big(\partial^*I\cap (0,L)\big)+2\H^0\big((J_{{y}}\cup J_{{R}})\setminus \partial^*I\big)\,. 
\end{equation}
Here, $R$ is defined as in \eqref{SBV_2_isom}, and the quadratic form $\Q_2\colon\R^{3\times 3}_{\mathrm{skew}}\to \R_+$  is defined through a minimization problem as  
\begin{equation}\label{def_Q_2}
\Q_2(A):= \min_{a\in W^{1,2}\left(\left(-\frac{1}{2},\frac{1}{2}\right)^2;\R^3\right)} \int_{\left(-\frac{1}{2},\frac{1}{2}\right)^2}\Q_3\left( A \begin{pmatrix}
0 \\
x_{2} \\
x_{3}
\end{pmatrix}\Bigg\vert \, \alpha_{,2}\Bigg \vert \, \alpha_{,3}\right)\,\mathrm{d}x_2\, \mathrm{d}x_3
\end{equation}
 for all $A \in \R^{3 \times 3}_{\rm skew}$, where, for every $G\in \R^{3\times 3}$,
\begin{equation}\label{linear_elasticity_quadratic_form}
\Q_3(G):= D^2 W( {\rm Id}) [G,G] 
\end{equation}
is the corresponding quadratic form of linearized elasticity. Note that, as $R$ belongs to $SO(3)$, $R^TR_{,1}$ is skew symmetric, and thus the elastic energy in \eqref{limiting_one_dimensional_energy} is well defined.  Moreover, due to \eqref{eq: nonlinear energy}, $\mathcal{Q}_3$ vanishes on $\R^{3\times 3}_{\mathrm{skew}}$ and is strictly positive definite on $\R^{3\times 3}_{\mathrm{sym}}$.

As mentioned also in the introduction, the limiting one-dimensional model features the classical bending-torsion term derived in \cite{Mora} and two surface terms related to the presence of voids. The first part corresponds to the energy contribution of the limiting void $I$, whereas the second part  is associated to discontinuities  or kinks of the deformation, represented by $J_y$ and $J_R$, respectively. This term is due to the fact that voids may collapse to single points and hence enters the  energy with a factor $2$, see Figure~\ref{fig:dimred}.

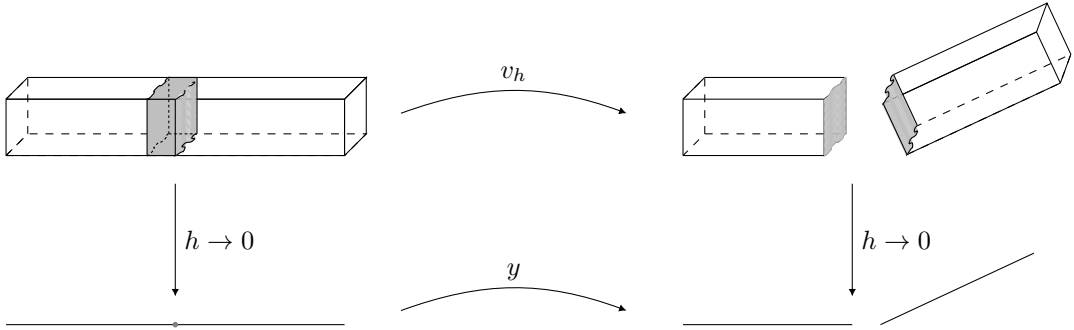
\begin{figure}[htp]
\begin{tikzpicture}[scale=.75]

\tikzset{>={Latex[width=1mm,length=1mm]}};

\draw[->] (7,.75) to[out=20,in=160] (11,.75);
 
\draw(9,1.75) node[anchor=north]{$v_h$};

\draw[->] (7,-2.75) to[out=20,in=160] (11,-2.75);
  
\draw(9,-1.75) node[anchor=north]{$y$};

\draw[dashed](0,1,0)--++(0,0,-1)--++(0,-1,0)--++(0,0,1);

\draw[dashed](0,0,0)--++(6,0,0)--++(0,0,-1)--++(-6,0,0)--++(0,0,1);

\draw(6,0,0)--++(0,1,0)--++(0,0,-1)--++(0,-1,0)--++(0,0,1);

\draw(0,1,0)--++(6,0,0)--++(0,0,-1)--++(-6,0,0)--++(0,0,1);

\draw(0,0,0)--++(0,1,0)--++(6,0,0)--++(0,-1,0)--++(-6,0,0);

\pgfmathsetmacro{\thick}{50};

\foreach \j in {0,...,\thick}{
\draw[gray!50!white,opacity=.2](3,0,0) plot[ smooth, tension=2] coordinates { (3,\j/\thick,0) (3.03,\j/\thick,-.1) (3.05,\j/\thick,-.3) (2.95,\j/\thick,-.5)(3.05,\j/\thick,-.7)(2.97,\j/\thick,-.85)(3,\j/\thick,-1)};

\draw[gray!50!white,opacity=.11](3,0,0) plot[ smooth, tension=2] coordinates {(2.5,\j/\thick,0)(2.52,\j/\thick,-.2)(2.47,\j/\thick,-.5)(2.52,\j/\thick,-.7)(2.49,\j/\thick,-.9)(2.5,\j/\thick,-1)};
}

\draw[black,opacity=.5,fill opacity=.2,fill=gray!50!white](3,0,0)--++(0,1,0)--++(-.5,0,0)--++(0,-1,0)--++(.5,0,0);
\draw[black,opacity=.5](3,0,-1)--++(0,1,0);

\foreach \j in {0,\thick}{
\draw[black,opacity=.5](3,0,0) plot[ smooth, tension=2] coordinates { (3,\j/\thick,0) (3.03,\j/\thick,-.1) (3.05,\j/\thick,-.3) (2.95,\j/\thick,-.5)(3.05,\j/\thick,-.7)(2.97,\j/\thick,-.85)(3,\j/\thick,-1)};
}

\draw[black,opacity=.5,fill opacity=.2,fill=gray!50!white](3,1,0)--++(-.5,0,0)-- plot[ smooth, tension=2] coordinates {(2.5,1,0)(2.52,1,-.2)(2.47,1,-.5)(2.52,1,-.7)(2.49,1,-.9)(2.5,1,-1)}--++(.5,0,0);

\draw[black,opacity=.5,dash pattern = on 1 pt off 1 pt]  plot[ smooth, tension=2] coordinates {(2.5,0,0)(2.52,0,-.2)(2.47,0,-.5)(2.52,0,-.7)(2.49,0,-.9)(2.5,0,-1)};

\draw[black,opacity=.5,dash pattern = on 1 pt off 1 pt](2.5,0,-1)--++(0,1,0);

\draw[black,opacity=.5,dash pattern = on 1 pt off 1 pt](2.5,0,-1)--++(.5,0,0);

\draw[->](3,-.5)--++(0,-2);

\draw(3,-1.5) node[anchor=west]{$h\to 0$};

\draw(0,-3)--++(6,0);
\draw[fill=gray,gray] (3,-3)circle(.04);

\begin{scope}[shift={(12,0)}]

\draw(0,-3)--++(3,0);

\draw(0,-3)++(3.5,0)--++(25:3);
%\draw[fill=gray,gray] (3,-3)circle(.04);

\draw[->](3,-.5)--++(0,-2);
\draw(3,-1.5) node[anchor=west]{$h\to 0$};

\draw[dashed](0,1,0)--++(0,0,-1)--++(0,-1,0)--++(0,0,1);

\draw[dashed](0,0,-1)--++(2.5,0,0);

%\draw(6,0,0)--++(0,1,0)--++(0,0,-1)--++(0,-1,0)--++(0,0,1);

\draw(0,1,0)--++(2.5,0,0);
\draw(0,1,0)--++(0,0,-1);
\draw(0,1,-1)--++(2.5,0,0);

\draw(0,0,0)--++(2.5,0,0);
\draw(0,0,0)--++(0,1,0);

\draw[black,opacity=.5] (2.5,0,0)--++(0,1,0);

\draw[black,opacity=.5] (2.5,0,-1)--++(0,1,0);

\draw[black,opacity=.5] plot coordinates {(2.5,1,0)(2.52,1,-.2)(2.47,1,-.5)(2.52,1,-.7)(2.49,1,-.9)(2.5,1,-1)};

\draw[black,opacity=.5]  plot[ smooth, tension=2] coordinates {(2.5,0,0)(2.52,0,-.2)(2.47,0,-.5)(2.52,0,-.7)(2.49,0,-.9)(2.5,0,-1)};

\foreach \j in {0,...,\thick}{

\draw[gray!50!white,opacity=.2]  plot[ smooth, tension=2] coordinates {(2.5,\j/\thick,0)(2.52,\j/\thick,-.2)(2.47,\j/\thick,-.5)(2.52,\j/\thick,-.7)(2.49,\j/\thick,-.9)(2.5,\j/\thick,-1)};

}

\end{scope}

\begin{scope}[shift={(13.25,-1.25)},rotate=25]

%\draw[dashed](0,1,0)--++(0,0,-1)--++(0,-1,0)--++(0,0,1);

\draw[dashed](3,0,0)--++(3,0,0)--++(0,0,-1)--++(-3,0,0);

\draw(6,0,0)--++(0,1,0)--++(0,0,-1)--++(0,-1,0)--++(0,0,1);

\draw(3,1,0)--++(3,0,0)++(0,0,-1)--++(-3,0,0)++(0,0,1);

\draw(3,0,0)++(0,1,0)--++(3,0,0)++(0,-1,0)--++(-3,0,0);

\foreach \j in {0,...,\thick}{
\draw[gray!50!white,opacity=.2](3,0,0) plot[ smooth, tension=2] coordinates { (3,\j/\thick,0) (3.03,\j/\thick,-.1) (3.05,\j/\thick,-.3) (2.95,\j/\thick,-.5)(3.05,\j/\thick,-.7)(2.97,\j/\thick,-.85)(3,\j/\thick,-1)};

%\draw[gray!50!white,opacity=.11](3,0,0) plot[ smooth, tension=2] coordinates {(2.5,\j/\thick,0)(2.52,\j/\thick,-.2)(2.47,\j/\thick,-.5)(2.52,\j/\thick,-.7)(2.49,\j/\thick,-.9)(2.5,\j/\thick,-1)};
}

\draw[black,opacity=.5,fill opacity=.2,fill=gray!50!white](3,0,0)--++(0,1,0);
\draw[black,opacity=.5](3,0,-1)--++(0,1,0);

\foreach \j in {0,\thick}{
\draw[black,opacity=.5](3,0,0) plot[ smooth, tension=2] coordinates { (3,\j/\thick,0) (3.03,\j/\thick,-.1) (3.05,\j/\thick,-.3) (2.95,\j/\thick,-.5)(3.05,\j/\thick,-.7)(2.97,\j/\thick,-.85)(3,\j/\thick,-1)};
}

\end{scope}

\end{tikzpicture}
\caption{A collapsing void leading to a discontinuity for $J_y$ and $J_R$.}
\label{fig:dimred}
\end{figure}

\EEE

\textbf{Main results:}
Keeping in mind \eqref{order_1_domain}, \eqref{admissible_configurations_h_level}, \eqref{limiting_admissible_pairs}, and setting
\begin{equation*}
%\label{identify_1_d_to_3_d}
V_I:=I\times (-1/2,1/2)^2\in \mathcal{P}(\Omega)\ \ \text{for } I\in \mathcal{P}(0,L)\,,
\end{equation*}
our main results in this paper are summarized as follows.

\begin{theorem}\label{compactness_thm}
$\text{(\underline {Compactness})}$ Let $(h_j)_{j\in \N}\subset (0,\infty)$ with $h_j\searrow 0$  and $(y_{h_j}, V_{h_j}) \in \hat{\mathcal{A}}_{h_j}$ be  such that 
\begin{equation}\label{uniform_rescaled_energy_bound}
\sup_{j\in \mathbb{N}}\E^{h_j}(y_{h_j},V_{h_j})<+\infty\,. 
\end{equation}
Then, there exists $\big((y\vert \, d_2\vert \, d_3),I\big)\in \mathcal{A}$ such that up to a non-relabeled subsequence,
\begin{align}\label{compactness_properties}
\begin{split}
\rm{(i)}\quad&\chi_{V_{h_j}}\longrightarrow \chi_{V_I}\ \text{ in } L^1(\Omega)\,,\\
\rm{(ii)}\quad& y_{h_j}\longrightarrow \bar{y}  \text{ in  } L^1(\Omega;\R^3)  \,,\\
\rm{(iii)}\quad& \chi_{\Omega\setminus V_{h_j}}\nabla_{h_j}y_{h_j}\rightharpoonup \chi_{\Omega\setminus V_I} \bar{R}  \ \text{ weakly in } L^2(\Omega;\R^{3\times 3})\,,
\end{split}
\end{align}
where $ \bar{y} $ and $ \bar{R} $ are  meant here with the conventions made in \eqref{eq: convention1}--\eqref{eq: convention2}.
\end{theorem}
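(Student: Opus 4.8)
The plan is to establish the three convergences in \eqref{compactness_properties} separately. The void set is controlled by the compactness theory for sets of finite perimeter, while for the deformations I reduce, via the piecewise Sobolev replacement of Section~\ref{preparatory_modifications}, to the purely elastic situation treated in \cite{Mora}. For the voids: by \eqref{uniform_rescaled_energy_bound} and \eqref{eq: newenergy}, and since the integrand in the rescaled perimeter term satisfies $\big|\big(\nu^1_{V_{h_j}},\, h_j^{-1}\nu^2_{V_{h_j}},\, h_j^{-1}\nu^3_{V_{h_j}}\big)\big|\ge 1$ for $h_j\le 1$, the perimeters $\H^2(\partial V_{h_j}\cap\Omega)$ are uniformly bounded, so along a subsequence $\chi_{V_{h_j}}\to\chi_V$ in $L^1(\Omega)$ for some $V\in\mathcal{P}(\Omega)$. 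The same bound gives $\int_{\partial V_{h_j}\cap\Omega}\sqrt{(\nu^2_{V_{h_j}})^2+(\nu^3_{V_{h_j}})^2}\,\d\H^2\le C h_j\to 0$, hence by lower semicontinuity of this degenerate anisotropic perimeter functional the reduced boundary $\partial^* V$ has outer normal $\pm e_1$ at $\H^2$-a.e.\ point; therefore $V=V_I$ for some $I\in\mathcal{P}(0,L)$, which is (i).

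For the deformations, the core step is the application of Propositions~\ref{Lipschitz_replacement} and~\ref{prop: 2nd main}: partitioning $\Omega$ into $\sim h_j^{-1}$ long cuboids, distinguishing the ``good'' ones -- where the enlarged void has locally small perimeter -- from the finitely many ``ugly'' ones (only $O(1)$ of them occur, since each carries surface energy of order $h_j^2$), and combining the piecewise rigidity estimate of \cite{KFZ:2021} with the Korn inequality for functions with small jump set of \cite{Cagnetti-Chambolle-Scardia}, one obtains modified deformations $\hat y_{h_j}$ that agree with $y_{h_j}$ outside a set of vanishing measure, are piecewise $W^{1,2}$ on $\Omega$, satisfy $\int_{G_{h_j}}\dist^2(\nabla_{h_j}\hat y_{h_j},SO(3))\,\d x\le C h_j^2$ on the union $G_{h_j}$ of good cuboids, and whose jump set is controlled and localizes, in the limit, to finitely many points. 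On $G_{h_j}$ one then runs the scheme of \cite{Mora}: the local rigidity estimates (Proposition~\ref{summary_estimates_proposition} and Corollary~\ref{difference_rigid-motions}) provide maps $R_{h_j}\colon(0,L)\to SO(3)$, piecewise constant on the cuboids, with $\|\chi_{G_{h_j}}(\nabla_{h_j}\hat y_{h_j}-R_{h_j})\|_{L^2(\Omega)}\to 0$, together with a uniform bound for a mollification of $R_{h_j}$ in $P$-$H^1((0,L);\R^{3\times 3})$ coming from the $h_j^4$ scaling of the bending term. Hence, along a further subsequence, $R_{h_j}\rightharpoonup R$ weakly in $L^2$ with $R(x_1)\in SO(3)$ a.e.\ and $R\in P$-$H^1$; combined with the first step this yields $\chi_{\Omega\setminus V_{h_j}}\nabla_{h_j}y_{h_j}\rightharpoonup\chi_{\Omega\setminus V_I}\bar R$, i.e.\ (iii).

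It remains to derive (ii) and identify the limit in $\mathcal{A}$. Since the first column of $\nabla_{h_j}\hat y_{h_j}$ is $\partial_1\hat y_{h_j}$, the previous step gives $\partial_1\hat y_{h_j}\to R e_1$ in $L^2(G_{h_j};\R^3)$; together with $\|\hat y_{h_j}\|_{L^\infty(\Omega)}\le M$ and a Poincar\'e--Wirtinger argument on each connected piece of $\Omega$ between the finitely many limiting jump points, $\hat y_{h_j}\to\bar y$ in $L^1(\Omega;\R^3)$ with $\bar y(x)=y(x_1)$, $y\in P$-$H^1$ and $y_{,1}=R e_1$; as $\hat y_{h_j}=y_{h_j}$ off a negligible set, also $y_{h_j}\to\bar y$, which is (ii). Finally, the constraint $y_{h_j}=T_{h_j}(\mathrm{id})$ on $V_{h_j}$ passes to the limit: on $V_I$ one gets $\bar y(x)=x_1 e_1$, hence $y|_I(x_1)\equiv x_1$ and $y_{,1}|_I\equiv e_1$; moreover on cuboids eventually filled by the void the corresponding local rotations equal $\mathrm{Id}$, so $R|_I\equiv\mathrm{Id}$, i.e.\ $(y_{,1}\vert\, d_2\vert\, d_3)|_I\equiv\mathrm{Id}$, where $d_2,d_3$ are the second and third columns of $R$ (equivalently, the weak $L^2$-limits of $h_j^{-1}\partial_2\hat y_{h_j}$ and $h_j^{-1}\partial_3\hat y_{h_j}$ on $G_{h_j}$). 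Since also $\|\bar y\|_{L^\infty}\le M$ and $R\in SO(3)$ a.e., we conclude $((y\vert\, d_2\vert\, d_3),I)\in\mathcal{A}$.

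I expect the main obstacle to be entirely contained in the second step: making the good/ugly dichotomy quantitative, upgrading the symmetrized-gradient rigidity of \cite{KFZ:2021} to a full-gradient estimate by means of \cite{Cagnetti-Chambolle-Scardia}, controlling the relative rotations of adjacent cuboids in terms of the local elastic energy, and ensuring that the jump set of $R_{h_j}$ localizes to the correct finitely many points (which is also decisive for the subsequent $\Gamma$-liminf inequality). This is precisely what Propositions~\ref{Lipschitz_replacement} and~\ref{prop: 2nd main} and their corollaries are designed to provide; granting them, the passage to the limit carried out above is a routine adaptation of the arguments in \cite{Mora}.
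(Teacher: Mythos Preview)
Your proposal is essentially correct and follows the same route as the paper: BV compactness plus the anisotropic perimeter argument for (i), then the piecewise Sobolev replacement of Proposition~\ref{Lipschitz_replacement} to reduce to the Sobolev setting of \cite{Mora} for (ii)--(iii). Two minor remarks: Proposition~\ref{prop: 2nd main} is not needed for compactness (it enters only in the $\Gamma$-liminf to produce the factor~$2$), and the paper packages the elastic step as a black-box application of \cite[Theorem~2.1]{Mora} on each connected component of a fixed domain $\Omega_\rho^\delta$ (obtained by excising $\delta$-neighborhoods of the limiting jump points), rather than re-running the cuboid rotation construction by hand; your description of that construction is really the content of the proof of Proposition~\ref{Lipschitz_replacement}, not something one invokes again here.
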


\begin{definition}\label{type_of_convergence}
 We say that $(y_{h_j},V_{h_j})\overset{\tau}{\longrightarrow}\big((y\vert \, d_2\vert \, d_3),I\big)$ if and only if  \eqref{compactness_properties} holds. 
\end{definition}

Since \eqref{admissible_configurations_h_level} implies that $\sup_{j\in \N}\|y_{h_j}\|_{L^\infty(\Omega)}\leq M$, the convergence in \eqref{compactness_properties}(ii) actually holds in $L^p(\Omega;\R^3)$ for every $p\in [1,+\infty)$. %\RRR(give/refer to a remark on how to remove the apriori boundedness assumption) \EEE
We are now ready to state the main $\Gamma$-convergence result.

\begin{theorem}\label{main_gamma_convergence_thm}$(\underline{ {\Gamma}\text{-convergence}})$ 
Let $(h_j)_{j\in \N}\subset (0,\infty)$ with $h_j\searrow 0$. The sequence of functionals $(\E^{h_j})_{j\in \N}$ $\Gamma(\tau)$-converges to the functional $\E^{0}$, i.e., the following two inequalities hold true.\\[-7pt]

\noindent $\rm{(i)}$ $(\underline{ {\Gamma}\text{-liminf inequality}})$ Whenever $(y_{h_j},V_{h_j})\overset{\tau}{\longrightarrow}\big((y\vert \, d_2\vert \, d_3),I\big)$, then
\begin{equation}\label{gamma_liminf_inequality}
\E^0\big((y\vert \, d_2\vert \, d_3), I\big)\leq \liminf_{j\to +\infty}\E^{h_j}(y_{h_j},V_{h_j})\,.
\end{equation} 
$\rm{(ii)}$ $(\underline{ {\Gamma}\text{-limsup inequality}})$ For every $\big((y\vert \, d_2\vert \, d_3), I\big)\in \mathcal{A}$ there exists a sequence $(y_{h_j},V_{h_j})_{j\in \N}$ with  $ (y_{h_j},V_{h_j})\in \hat{\mathcal{A}}_{h_j}$ for each $j \in \mathbb{N}$, such that $(y_{h_j},V_{h_j})\overset{\tau}{\longrightarrow}\big((y\vert \, d_2\vert \, d_3),I\big)$, and 
\begin{equation}\label{gamma_limsup_inequality}
\limsup_{j\to +\infty} \E^{h_j}(y_{h_j},V_{h_j})\leq \E^0\big((y\vert \, d_2\vert \, d_3), I\big)\,. 
\end{equation} 
\end{theorem}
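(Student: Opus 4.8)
\textbf{Proof strategy for Theorem \ref{main_gamma_convergence_thm}.}

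The plan is to reduce both inequalities, via the piecewise Sobolev approximation machinery announced in Section \ref{preparatory_modifications}, to the corresponding statements in the purely elastic case \cite{Mora}, adding the bookkeeping of the surface terms separately. For the $\Gamma$-liminf inequality, I would first use Theorem \ref{compactness_thm} to fix the limit $((y\vert d_2\vert d_3),I)\in\mathcal A$ along a subsequence realizing the liminf, and assume $\sup_j h_j^{-4}\F^{h_j}(v_{h_j},E_{h_j})<+\infty$ (otherwise nothing to prove). The first step is to apply Propositions \ref{Lipschitz_replacement} and \ref{prop: 2nd main}: enlarging the voids $V_{h_j}$ according to \cite[Proposition 2.8]{KFZ:2021} and modifying the deformations on a small part of the rod, one obtains, on the remaining cuboids, new deformations that are genuinely $W^{1,2}$ with controlled elastic energy, and whose jump set (in the limiting one-dimensional sense) lies in a controlled finite set producing the correct jump points of $y$ and $R$. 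The second step is to decompose the rod $\Omega$ into $\sim h_j^{-1}$ long cuboids and, on the ``good'' cuboids, invoke the locally-rigid replacement to run verbatim the bending-torsion argument of \cite{Mora}: a second-order Taylor expansion of $W$ around $SO(3)$, extraction of the limiting rotation field $R$ and the first-order corrector, and lower semicontinuity of the quadratic form $\Q_3$, leading to $\tfrac12\int_{(0,L)\setminus I}\Q_2(R^TR')\,\mathrm dx_1$ as a lower bound for the rescaled elastic energy. The third step handles the surface part: the anisotropically-rescaled perimeter in \eqref{eq: newenergy} controls, by lower semicontinuity of perimeter under $L^1$-convergence together with the fact that the $h^{-1}$-weighting forces the limiting interface to be a union of cross-sectional planes, the term $\H^0(\partial^* I\cap(0,L))$; the extra factor $2$ on $(J_y\cup J_R)\setminus\partial^*I$ comes from the observation — quantified in Proposition \ref{prop: 2nd main} — that a void collapsing onto a jump point of $y$ or $R$ must, before collapsing, carry interface on ``both sides'', hence its rescaled perimeter contributes at least $2$ there. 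Adding the two lower bounds and using that the ``bad'' cuboids are finitely many (their total contribution being negligible by the curvature bound \eqref{rate_1_gamma_h}) yields \eqref{gamma_liminf_inequality}.

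For the $\Gamma$-limsup inequality, I would argue by a density/diagonalization reduction: it suffices to construct recovery sequences for $((y\vert d_2\vert d_3),I)$ in a dense subclass — e.g.\ $y$ piecewise $C^\infty$ up to the (finite) jump set $\partial^* I\cup J_y\cup J_R$, with $I$ a finite union of open intervals — and then pass to the general case by the lower semicontinuity just proved together with continuity of $\E^0$ along such approximations. For such a datum, I would take the deformations $v_{h_j}$ to be exactly the Mora–M\"uller recovery sequence of \cite{Mora} on each sub-rod between consecutive singular points (which by construction satisfies $h_j^{-2}\int W(\nabla_{h_j} y_{h_j})\to\tfrac12\int\Q_2(R^TR')$, is $L^\infty$-bounded, and converges as required), glued by affine isometries across the singular points. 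For the voids $E_{h_j}$, over each interval of $I$ I place a thin cylindrical void $ (a,b)\times (-h_j/2,h_j/2)^2$ (slightly smoothed near the two bounding planes so that $E_{h_j}\in\mathcal A_{\rm reg}(\Omega_{h_j})$), whose rescaled perimeter tends to $\H^0(\partial^* I\cap(0,L))$ and whose rescaled curvature energy $h_j^2\kappa_{h_j}\int|\bm A|^2$ is supported on caps of area $\sim h_j^2$ with curvature $\sim h_j^{-1}$, hence is $O(\kappa_{h_j})\to 0$; over each point of $(J_y\cup J_R)\setminus\partial^* I$ I insert a flat ``disk-like'' void of thickness $\delta_j\downarrow 0$ occupying the full cross-section, whose two nearly-planar faces contribute $2$ to the rescaled perimeter and, again, negligible curvature energy, while cutting the rod at that section and allowing the jump/kink to be realized with vanishing extra elastic cost. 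A careful choice of the smoothing scales (subordinate to $\kappa_{h_j}$ via \eqref{rate_1_gamma_h}) makes all error terms vanish, giving \eqref{gamma_limsup_inequality}.

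The main obstacle, as flagged in Subsection \ref{plan_of_paper}, is the $\Gamma$-liminf inequality — specifically, upgrading the symmetrized-gradient rigidity of \cite[Theorem 2.1]{KFZ:2021} to a full-gradient piecewise $W^{1,2}$ control on all but finitely many cuboids so that the classical difference-quotient argument of \cite{Mora} becomes applicable. This is precisely where the Korn inequality for functions with small jump set \cite{Cagnetti-Chambolle-Scardia} enters and where the rate condition \eqref{rate_1_gamma_h} on $\kappa_h$ is used to guarantee that the enlarged voids are small enough (in perimeter) on the good cuboids; this is carried out in Propositions \ref{summary_estimates_proposition}, \ref{Lipschitz_replacement} and \ref{prop: 2nd main}. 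A secondary, more delicate point is the sharp counting in the surface term: one must show that a void collapsing to a single point of $J_y\cup J_R$ genuinely forces the rescaled perimeter to contribute $2$ (and not just $1$), which relies on a topological/separation argument ensuring the void disconnects the relevant cuboid and on the interaction between the perimeter lower bound and the jump of the limiting deformation or frame.
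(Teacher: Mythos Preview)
Your overall strategy for both inequalities is the one the paper follows: reduce the elastic part to \cite{Mora} via the piecewise Sobolev modifications of Propositions~\ref{Lipschitz_replacement}--\ref{prop: 2nd main}, and treat the surface part separately. The liminf outline is accurate, including the role of Proposition~\ref{prop: 2nd main} in forcing the factor~$2$ at points of $(J_y\cup J_R)\setminus\partial^*I$. One small correction of emphasis: the paper's mechanism for the factor~$2$ is not a topological separation argument but an analytic contradiction --- if the rescaled perimeter of $E_h^*$ in $S_h^{2\delta}(x_j)$ were strictly below~$2$, Proposition~\ref{prop: 2nd main} yields $\int\sqrt{|[\tilde w_h]|}\,\mathrm d\H^2\to 0$ (resp.\ with $\tilde R_h$), and then Ambrosio's lower semicontinuity theorem in $SBV$ applied to the integrand $\sqrt{|[\cdot]|}$ rules out a jump of $\bar y$ (resp.\ $\bar R$) at $x_j$.

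Your limsup construction, however, has a genuine gap. You propose smoothing the planar faces of the voids so that $E_h\in\mathcal A_{\rm reg}(\Omega_h)$, and estimate the resulting rescaled curvature energy as ``$O(\kappa_{h})\to 0$''. Two problems: first, the rescaling is by $h^{-4}$, so the curvature contribution to $\E^h$ is $h^{-2}\kappa_h\int_{\partial E_h\cap\Omega_h}|\bm A|^2\,\mathrm d\H^2$; with caps of area $\sim h^2$ and curvature $\sim h^{-1}$ this is $\sim h^{-2}\kappa_h$, not $\sim\kappa_h$. Second, condition~\eqref{rate_1_gamma_h} only says $\kappa_h\gg h^{52/25}$ --- it does \emph{not} force $\kappa_h\to 0$, let alone $h^{-2}\kappa_h\to 0$ (e.g.\ $\kappa_h\equiv 1$ is admissible). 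So your smoothed construction can fail. The paper sidesteps this entirely: the sets $E_h=T_h(V_h)$ with $V_h$ as in \eqref{recovery_sequence_for_voids} have $\partial E_h\cap\Omega_h$ equal to a finite union of \emph{flat} cross-sectional squares, which are already $C^\infty$ surfaces in $\Omega_h$ (the definition of $\mathcal A_{\rm reg}$ concerns only $\partial E\cap\Omega_h$, not the corners on $\partial\Omega_h$). Hence $|\bm A|\equiv 0$ and the curvature term vanishes identically --- no smoothing, no estimate needed. Relatedly, your density reduction to piecewise $C^\infty$ data is unnecessary: Lemma~\ref{lemma: limsup Mora} (i.e.\ \cite[Theorem~3.1(ii)]{Mora}) applies directly to $W^{2,2}$ data on each connected component of $(0,L)\setminus(I\cup J_y\cup J_R)$, so the recovery sequence is built in one step. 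The only genuine approximation in the paper's limsup is a scaling trick to handle the borderline case $\|y\|_\infty=M$.
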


\begin{remark}[Extensions and variants]\label{choice_of_model}
\normalfont (i) We could consider more general perimeter energies of the form
$$\beta_h\int_{\partial E\cap \Omega_h}\varphi(\nu_E)\, \mathrm{d}\H^2\,,$$
where $\lim_{h\to 0} (h^{-2}\beta_h)=\beta>0$ and $\varphi$ is a norm in $\R^3$. For simplicity of the exposition, we have chosen $\beta_h = h^2$ and $\varphi$ to be the standard Euclidean norm in $\R^3$. The general case is completely analogous in its treatment, up to a prefactor $\varphi(e_1)$ appearing in the last two terms in \eqref{limiting_one_dimensional_energy}.

(ii) Regarding the choice of the curvature regularization, let us mention that, in view of the results in \cite[Theorem 2.1]{KFZ:2021}, any choice of the form 
$$h^2\kappa_h\int_{\partial E\cap \Omega_h}|\bm{A}|^q\,\mathrm{d}\H^2$$
with $q\geq 2$ would be possible, up to adjusting the condition for $\kappa_h$ in \eqref{rate_1_gamma_h} (which will then depend also on $q$).  The choice $q \ge 2$, however, is essential, see \cite[Lemma 2.11 and Example 2.12]{KFZ:2021}. For simplicity, we have chosen the exponent $q=2$, which corresponds to a curvature regularization of Willmore type.   

(iii)  Let us also remark that clamped boundary conditions and body forces can be included into the $\Gamma$-convergence statement. We refrain here from giving the details, but refer the interested reader to \cite[Corollaries 2.4, 2.5]{schmidt2017griffith} for results in this direction. 
\end{remark}

\begin{remark}[Discussion on the limiting model] \label{rem:discussion} {\normalfont \NNN  The limiting model  includes the {Griffith-Euler-} \\ Bernoulli theory for brittle beams derived in \cite{schmidt2017griffith}, which corresponds to an  energy of the form
\begin{align}\label{def:GEB}
\mathcal{E}_{\mathrm{GEB}}(y):=\int_0^L |\kappa_y|^2\,\mathrm{d}x_1 + \mathcal{H}^0(J_y\cup J_{{y}'})\,,
\end{align}
where $y\in P\text{-}W^{2,2}((0,L);\mathbb{R}^2)$ is an arclength parametrization and $\kappa_y$ denotes the corresponding curvature. This follows from our model for $I=\emptyset$, deformations $y=(y_1,y_2,0)$, and  Frenet frames  $R=(y'|\,d_2|\,d_3)$ with $d_2=(-y_2',y_1',0)$ and $d_3=(0,0,1)$. This functional is related to a one-dimensional version of the Blake-Zisserman model \cite{Blake-Zisserman}, where $y$ is scalar and $\kappa_y$ is replaced by $y''$. Our model (with $I=\emptyset$) and the model in \eqref{def:GEB} have a similar response to boundary value problems.  In particular, prescribing boundary conditions $y(0)$ and $y(L)$ which are not compatible with smooth elements in $\mathcal{A}$, e.g.~if $|y(0)-y(L)| >L$, we necessarily have $J_y\cup J_{y'} \neq \emptyset$. Even if the boundary conditions can be achieved by smooth elements in $\mathcal{A}$,  cracks may be favorable whenever all curves connecting $y(0)$ and $y(L)$ have large curvature,  e.g.~if $|y(0)-y(L)| \ll L$.

Adding a volume constraint of the form $h^{-2}\mathcal{L}^3(E_h) \to 0$ in our $3$d model, we can easily recover \eqref{limiting_one_dimensional_energy} without voids, i.e., $I= \emptyset$. If we allow for voids in the limit, the  interpretation of the model is a bit more delicate, as the non-smoothness could be introduced through cracks, kinks, or voids. In the extreme case, even everything could be covered by void. To avoid the latter phenomenon, one could add a volume constraint of the form $\mathcal{L}^3(E_h) \leq \alpha h^2$ for $\alpha \in (0,L)$ or introduce body forces (both of which can be incorporated in the $\Gamma$-convergence result). With a body force of type
\begin{align*}
\int_{(0,L)\setminus I} f(x)\cdot y(x_1)\,\mathrm{d}x_1\,,
\end{align*}
cracks may be energetically favorable compared to voids. In fact, with lateral stretched boundary conditions (e.g.\ $y(0) = 0$, $y(L) = (L',0,0)$ with $L' > L$) and $f \equiv -e_3$ (corresponding to gravity force), one can check that it is convenient not to introduce void but a crack, with $J_y$ close to $0$ or $L$. }
\end{remark}

\begin{example}[$L^2$ vs $L^\infty$-bound on the curvature]\label{example:scaleandcardinality} {\normalfont Recall \eqref{initial_energy} and  \eqref{rescaled_energy}. The following example shows that we can exhibit configurations $(v_h, E_h) \in \mathcal{A}_h$ with
\begin{align*}
\sup_{h >0} h^{-4} \mathcal{F}^h(v_h, E_h) <+\infty\,,
\end{align*}
 where $E_h$ consists of balls which concentrate on arbitrarily small scales (independently of $h$), and whose number is diverging (with $h$).  As a preparation, let $r>0$ and observe that for $E:=B_r\subset\subset \Omega_h$, where $B_r$ is a ball of radius $r$, the second fundamental form of $\partial E$ satisfies $|{\bf A}|=\sqrt{2} r^{-1}$, and the surface energy contribution is
\begin{align}\label{ineq:energyball}
h^{-2}\left(\mathcal{H}^2(\partial B_r) + \kappa_h \int_{\partial B_r} |{\bf A}|^2\,\mathrm{d}\mathcal{H}^2\right) =  4\pi \big( h^{-2} r^2 + 2 h^{-2}\kappa_h \big)\,.
\end{align}
In the setting of \cite{SantiliSchmidt2022}  (cf.~Remark~3.1 \MMM therein) \EEE and for void sets consisting of a disjoint union of balls compactly contained in $\Omega_h$, an $L^\infty$-bound of the form $|{\bf A}| \le  Ch^{-1}$ implies a lower bound of order $h$ for the radius of each of those balls. The energy bound $h^{-2}\mathcal{H}^2(\partial E_h \cap \Omega_h) \leq C$ on the perimeter energy  implies now that such voids can only consist of finitely many disjoint balls \MMM whose \EEE cardinality  depends only on the a priori $L^\infty$-bound and the energy bound.

Instead, in our setting we can construct an example of a  sequence of voids  with the aforementioned requirements. We perform this construction for
\begin{align}\label{eq:ratekappaexample}
\kappa_h\to 0 \quad \text{such that} \quad h^{-2} \kappa_h \to 0\,.
\end{align}
This rate of convergence is possible   as $\kappa_h$ only needs to satisfy \eqref{rate_1_gamma_h}, take  e.g.\ $\kappa_h=h^{\AAA 51/25\EEE}$.  Let $N_h \in \mathbb{N}$, let $(x_{i,h})_{i=1}^{N_h} \subset \mathbb{R}^3$, $(r_{i,h})_{i=1}^{N_h} \subset (0,+\infty) $, be such that $B_{r_{i,h}}(x_{i,h}) \subset\AAA \subset\GGG \Omega_h$ for all $i$, with $r_{i,h} \leq h N_h^{-1/2}$ for all $i$, $B_{r_{i,h}}(x_{i,h}) \cap B_{r_{j,h}}(x_{j,h}) =\emptyset$ for $i\neq j$.  Set $E_h :=\bigcup_{i=1}^{N_h} B_{r_{i,h}}(x_{i,h})$ and  $ v_h  =\mathrm{id}$  (for definiteness only as this is not the point of this example). Then, by \eqref{ineq:energyball}, we have
\begin{align*}
\begin{split}	
h^{-4} \mathcal{F}^h(v_h, E_h) 
 &= 4\pi \sum_{i=1}^{N_h} h^{-2} r_{i,h}^2 + \AAA 8\pi N_h h^{-2} \kappa_h\EEE\,.
\end{split}
\end{align*}
By \eqref{eq:ratekappaexample} and  $r_{i,h} \leq h N_h^{-1/2}$ for all $i$, we can suitably choose $N_h \to +\infty$  to obtain a sequence of void sets $(E_h)_{h>0}$ with equi-bounded surface energy, that has a diverging number of components with no (not even diverging with $h$) $L^\infty$-control on the curvature.}
\end{example}

\section{\NNN Blockwise \EEE Sobolev modification of deformations}\label{preparatory_modifications}

This section is devoted to two preliminary propositions which are vital in the proofs of the compactness Theorem \ref{compactness_thm} in Section \ref{compactness} and the $\Gamma$-liminf inequality of Theorem \ref{main_gamma_convergence_thm} in Section \ref{gamma_liminf}. Our reasoning relies on the approximation of a sequence of deformations with equibounded energy by mappings which are \NNN blockwise \EEE Sobolev. This will allow us to use the results of \cite[Theorems 2.1 and 3.1]{Mora} in subsets of the domain where the  modified functions are weakly differentiable. In order to control the  surface contributions due to voids correctly, our arguments will also include estimates on the jump set of the \NNN blockwise \EEE Sobolev approximations. \NNN The main gain of our construction is the fact that,  \NNN in contrast to the jump set of  original deformations $(v_h)_{h>0}$, we are able to control the geometry of the jump set of the newly constructed sequence, namely the new jump set is contained in finitely many vertical planes. That is the reason why we call this modification \emph{blockwise} Sobolev approximation. \EEE

In this and in the following sections, we will use  the continuum  subscript $h>0$ instead of the sequential subscript notation $(h_j)_{j\in \N}$ for notational convenience. Before we can state the main results of this section, we need to collect some more notation. Recalling the definition of $\Omega_h$ in \eqref{eq: Omegah}, for $\rho\in (0,1)$ we define the slightly smaller reference domain
\begin{equation}\label{Omega_h_local}
\Omega_{h,\rho}:%\{x\in \Omega_h\colon \mathrm{dist}(x,\partial \Omega_h)>\rho h\}
=(\rho h,L-\rho h)\times(-\tfrac{h}{2}+ \tfrac{1}{2}\rho h,\tfrac{h}{2}-\tfrac{1}{2}\rho h)^2\,.
\end{equation}
For $T \in \N$, $T \gg 1$, we  cover  the domain $\Omega_{h}$ with \textit{T-cuboids}, namely
$$Q_{h}(i)   :=\big[(i-1)Th,iT h\big)\times(-\tfrac{h}{2},\tfrac{h}{2})^2\,,  $$ 
for $i=1,\dots, N := \lfloor{\tfrac{L}{Th}\rfloor+1}$, and let
\begin{equation}\label{overlapping_rectangles}
\mathcal{Q}_{h} := \lbrace Q_{h}(i)\colon  i=1 
,\dots, N  \rbrace  \,. 
\end{equation} 
While we will  eventually send $\rho \to 0$ in Section \ref{compactness},  $T$  is fixed throughout the paper. Therefore, we refrain from including $T$ in the notation of  $\mathcal{Q}_h$.    For $x \in \R^3$, $l \ge 0$ we introduce the \emph{stripes}
\begin{align}\label{eq: D not}
S^l_{h}(x)  :=  (x-l,x+l) \times (-\tfrac{h}{2} , \tfrac{h}{2}   )^2 \,. 
\end{align}
Similarly to \eqref{Omega_h_local}, for $\rho\in (0,1)$  we also introduce the smaller stripes
\begin{align}\label{eq: D not2}
S^l_{h,\rho}(x)  :=  (x-l + \rho l,x+l -\rho l ) \times (-\tfrac{h}{2} + \tfrac{1}{2}\rho h, \tfrac{h}{2} - \tfrac{1}{2} \rho h )^2    \,.   
\end{align}
For every measurable set $K\subset \R^3$ and $\gamma >0$ we introduce the localized surface energy
\begin{equation}\label{F_surf_energy}
\mathcal{G}^\gamma_{\rm{surf}}(E;K):=\mathcal{H}^2(\partial E\cap K)+ \gamma \int_{\partial E\cap K}|\bm A|^2\,\mathrm{d}\mathcal{H}^2\,, 
\end{equation}  
where for later purposes we use a general parameter $\gamma$ in place of $\kappa_h$. Then, given an infinitesimal sequence  $(\epsilon_h)_{h>0} \subset (0,+\infty)$, we define the total rescaled  energy  by 
\begin{align}\label{eq: G energy}
\mathcal{G}^h(v,E):= \frac{1}{h^2 \epsilon_h} \int_{\Omega_h\setminus \overline{E}} W(\nabla v)\,\mathrm{d}x+ \frac{1}{h^2} \mathcal{G}^{\kappa_h}_{\rm{surf}}(E;\Omega_h)  \,
\end{align}
for $(v,E)\in \mathcal{A}_h$. Note that for $\epsilon_h=h^2$ we have $\mathcal{G}^h(v,E) = h^{-4}\mathcal{F}^h(v,E)$ with $ \mathcal{F}^h$ as defined in \eqref{initial_energy}. In this section, we treat a more general scaling $\epsilon_h$ of the elastic energy in order to distinguish more clearly the scalings related to  the volume of $\Omega_h$ and that of the average elastic energy per unit volume.   

 \NNN In the next proposition, \EEE we assume that $T\gg 1$ is chosen big enough, see \eqref{eq: T1} \NNN for details. We also \NNN recall the role of $M\gg 1$ in \eqref{initial_admissible_configurations}. \EEE

\begin{proposition}[\NNN Blockwise \EEE Sobolev approximation of deformations]\label{Lipschitz_replacement}
Let $(\epsilon_h)_{h>0} \subset (0,+\infty)$ be a sequence satisfying $\limsup_{h \to 0} \epsilon_h h^{-2} < +\infty$, and let $0<\rho \le \rho_0$ for some universal $\rho_0>0$. Then, there exists a constant $C:=C(T,M)>0$ such that  for every sequence $(v_h,E_h)_{h>0}$ with $(v_h,E_h) \in \mathcal{A}_h$  and  
\begin{equation}\label{Lipschitz_replacement_energy_bound}
\sup_{h>0} \mathcal{G}^h(v_h,E_h)<+\infty\,,
\end{equation}
there exist sequences $(w_h)_{h>0}$ and $(R_h)_{h >0}$ with $w_h \in  SBV^2(\Omega_{h,\rho};\R^3)$ and $R_h \in  SBV^2(\Omega_{h,\rho};\R^{3\times 3})$  satisfying  the following properties:
\begin{align}\label{almost_the same_and_control_of_energy}
\begin{split}
\rm{(i)}\quad& \|w_h\|_{L^\infty(\Omega_{h,\rho})}\leq C, \quad  \|R_h\|_{L^\infty(\Omega_{h,\rho})}\leq  C\,,\\[2pt]
\rm{(ii)}\quad&  J_{w_h} \cup J_{R_h} \subset  \Omega_{h,\rho}  \cap \bigcup_{ Q_{h} \in \mathcal{Q}_{v_h}} \partial Q_{h} \quad  \text{for some } \mathcal{Q}_{v_h} \subset \mathcal{Q}_{h} \text{ with } \# \mathcal{Q}_{v_h} \le  C\,,\\[2pt]
\rm{(iii)}\quad& h^{-2} \L^3\big(\{x\in \Omega_{h,\rho}\colon w_h(x)\neq  v_h  (x)\}\big) \to 0,\quad 
h^{-2} \L^3\big(\Omega_{h,\rho} \cap\big\{|\nabla v_h - R_h |>\theta_h\big\}\big)  \to 0 \,,\\[2pt]
\rm{(iv)}\quad&\int_{\Omega_{h,\rho} } 
\mathrm{dist}^2(\nabla w_h,SO(3))\,\mathrm{d}x\leq   {C} h^2 \epsilon_h\,,\quad  \int_{\Omega_{h,\rho}} |\nabla R_h|^2\,  {\rm d}x \le  {C}\epsilon_h \,,
\end{split}
\end{align}	
where $(\theta_h)_{h>0} \subset (0,+\infty)$ is a sequence with  $\theta_h\to 0$ and $\theta_h \epsilon_h^{-1/2} \to \infty$. 
\end{proposition}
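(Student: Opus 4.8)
\textbf{Proof strategy for Proposition \ref{Lipschitz_replacement}.}
The plan is to partition $\Omega_h$ into the $T$-cuboids $Q_h(i)$ of \eqref{overlapping_rectangles} and to classify them according to how much perimeter of the (suitably enlarged) void set $E_h$ they contain. First I would invoke \cite[Proposition 2.8]{KFZ:2021} to replace $E_h$ by a slightly larger set $\widehat E_h$ with controlled perimeter and curvature, so that on each cuboid the complement $Q_h(i)\setminus\overline{\widehat E_h}$ either has a large connected component (a ``good'' cuboid) or carries perimeter at least of order $h^2$ (a ``bad'' cuboid). Since the total rescaled surface energy is bounded by \eqref{Lipschitz_replacement_energy_bound}, the number of bad cuboids is bounded by a constant $C=C(T,M)$; these will make up the finitely many cuboids in the exceptional family $\mathcal{Q}_{v_h}$ of item (ii). On the good cuboids, an isoperimetric argument (Subsection \ref{sec: isoperimetric}) combined with the piecewise rigidity estimate \cite[Theorem 2.1]{KFZ:2021} — whose applicability is exactly what the curvature regularization and the rate \eqref{rate_1_gamma_h} guarantee — produces a large-in-volume subset of the cuboid on which $v_h$ is close, in $L^2$, to a single rigid motion $x\mapsto R_{h,i}\,x + b_{h,i}$, with the $L^2$-defect controlled by the local elastic energy $\int_{Q_h(i)\setminus\overline{E_h}}\mathrm{dist}^2(\nabla v_h,SO(3))\,\mathrm{d}x$.

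Next I would upgrade this symmetrized-gradient control to a full-gradient control. The rigidity estimate of \cite{KFZ:2021} is sharp only for ${\rm sym}(\nabla v_h - R_{h,i}^T(\cdot))$, so on each good cuboid I apply the Korn–Poincaré inequality for functions with small jump set \cite{Cagnetti-Chambolle-Scardia} to the rescaled competitor on the $T$-cuboid; the smallness hypothesis on the jump set is met precisely because the cuboid is good, and the scaling budget \eqref{rate_1_gamma_h} is what makes the error terms in that inequality negligible (see the role of \eqref{parameters_for_uniform_bounds} advertised in the text). This yields, on all but finitely many cuboids, an estimate on $\int_{Q_h(i)}|\nabla v_h - R_{h,i}|^2$ of the order of the local elastic energy, i.e. of order $h^2\epsilon_h$ locally; summing over $i$ gives the global bounds. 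A discrete Poincaré-type chaining argument over adjacent good cuboids (Corollary \ref{difference_rigid-motions}) then controls $|R_{h,i}-R_{h,i+1}|$ and $|b_{h,i}-b_{h,i+1}|$ by the corresponding local energies, so that the piecewise-constant rotation field and the piecewise-affine map they define are genuinely $SBV^2$ with jump set contained in the faces of the bad cuboids only.

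Then I would assemble the global modifications. Define $R_h$ to be the piecewise-constant field equal to a mollified/averaged version of $\nabla v_h$ on each good cuboid (so that $\int|\nabla R_h|^2$ picks up jumps only across the $\le C$ bad interfaces, giving item (iv) second estimate after using $|R_{h,i}-R_{h,i+1}|^2\lesssim$ local energy and $\H^2(\partial Q_h)\sim h^2$, hence $\int_{\Omega_{h,\rho}}|\nabla R_h|^2\lesssim \epsilon_h$). Define $w_h$ by gluing together, on each good cuboid, the restriction of $v_h$ on the large good component with the rigid motion $R_{h,i}x+b_{h,i}$ on the (small-volume) remainder, truncating so that the $L^\infty$-bound in (i) follows from $\|v_h\|_{L^\infty}\le M$ and the uniform bound on the $b_{h,i}$. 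The set where $w_h\neq v_h$ is contained in the bad cuboids together with the small remainders, whose total rescaled volume $h^{-2}\L^3(\cdot)$ tends to $0$ by the isoperimetric estimate; this gives the first limit in (iii). The second limit in (iii), with a threshold $\theta_h\to 0$, $\theta_h\epsilon_h^{-1/2}\to\infty$, follows from Chebyshev applied to $\int_{\Omega_{h,\rho}}|\nabla v_h - R_h|^2\lesssim h^2\epsilon_h$ plus the small-volume bad set, choosing $\theta_h$ between $\epsilon_h^{1/2}$ and $1$. Finally, the defect bound in (iv) for $w_h$ holds because on the good part $w_h=v_h$ contributes the local elastic energy (summing to $\lesssim h^2\epsilon_h$) and on the remainder $\nabla w_h\in SO(3)$ contributes nothing.

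\textbf{Main obstacle.} The crux is the rigidity step on the good cuboids: one must certify that \cite[Theorem 2.1]{KFZ:2021} is applicable with constants independent of $h$ — this is where the hypothesis \eqref{rate_1_gamma_h} on $\kappa_h$ and the isoperimetric control on the perimeter inside good cuboids must be balanced so that the rigidity and Korn–Poincaré error terms are absorbed — and then to carefully track that the rigid motions match up across adjacent good cuboids so that the global $w_h$ and $R_h$ are $SBV^2$ with the advertised (finite, $h$-independent) jump set. Getting the dependence of all constants only on $T$ and $M$, and in particular the quantitative smallness of the exceptional volume $h^{-2}\L^3(\{w_h\ne v_h\})$, is the technically delicate part.
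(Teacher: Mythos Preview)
Your high-level strategy---enlarge the voids, classify the $T$-cuboids by local perimeter, apply the rigidity of \cite{KFZ:2021} followed by the Korn inequality of \cite{Cagnetti-Chambolle-Scardia} on the good ones, and chain adjacent rigid motions---is the paper's. But the construction you give for $w_h$ and $R_h$ does not deliver property (ii), and this is a genuine gap rather than a detail. You set $w_h=v_h$ on the dominant component and $w_h=R_{h,i}x+b_{h,i}$ on the small remainder of each good cuboid; this creates a jump along the boundary of the dominant component \emph{inside} every good cuboid, and there are $\sim h^{-1}$ of those, so $J_{w_h}$ cannot lie in the faces of finitely many cuboids. The paper's remedy is to use the \emph{second} conclusion of the Korn result (Theorem~\ref{th: kornSBDsmall}): it produces a genuine Sobolev replacement $z_{i,h}\in W^{1,2}(Q^3_{h,\rho}(i);\R^3)$ coinciding with $v_h^*$ on the dominant component and still satisfying $\|\mathrm{sym}(\nabla z_{i,h})\|_{L^2}\le c\|\mathrm{sym}(\nabla u_{i,h})\|_{L^2}$. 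You invoke \cite{Cagnetti-Chambolle-Scardia} only for its Korn estimate; the Sobolev-extension part is what actually makes the construction of $w_h$ work.

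Even with the $z_{i,h}$ in hand, a second missing ingredient is the gluing across adjacent good cuboids. The paper interpolates $w_h=\psi^h_{i,i+1}z_{i,h}+(1-\psi^h_{i,i+1})z_{i+1,h}$ (and $R_h$ between the constants $R_{i,h}$, $R_{i+1,h}$) using cutoffs with $\|\nabla\psi^h\|_\infty\le Ch^{-1}$, so that $w_h$ and $R_h$ are $W^{1,2}$ across every good--good interface and jump only at the finitely many bad/ugly boundaries. The transition layer contributes $h^{-2}|z_{i,h}-z_{i+1,h}|^2$ to $\mathrm{dist}^2(\nabla w_h,SO(3))$, and this is absorbable \emph{only} because on good--good pairs one has the sharp estimate $h^{-2}\|r_{i,h}-r_{i+1,h}\|_{L^\infty}^2+|R_{i,h}-R_{i+1,h}|^2\le Ch^{-3}(\eps_{i,h}+\eps_{i+1,h})$ with exponent $1$ (Corollary~\ref{difference_rigid-motions}); this is the real payoff of the Korn upgrade, not merely a full-gradient bound for its own sake. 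Your claim that closeness of $R_{h,i}$ and $R_{h,i+1}$ already forces $J_{w_h},J_{R_h}$ into the bad faces is incorrect---small jumps are still jumps---and relatedly your description of $R_h$ as ``piecewise-constant'' with $\int|\nabla R_h|^2$ ``picking up jumps'' is inconsistent: for a piecewise-constant field the approximate gradient vanishes, and the bound in (iv) for $R_h$ comes precisely from the gradient of the cutoff in the interpolation zones.
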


The result allows us to approximate $v_h$ by a \NNN blockwise \EEE Sobolev function $w_h$ by changing the mapping on an asymptotically vanishing portion of the volume, see \eqref{almost_the same_and_control_of_energy}(iii). The important point is that the elastic energy of $w_h$ is still of the same order, see  \eqref{almost_the same_and_control_of_energy}(iv). In the next sections we will also need some  control on the second gradient of $v_h$, which a priori might not exist. This is achieved by a second sequence of  functions $(R_h)_{h>0}$ which has  bounded derivative in $L^2$ and suitably approximates $\nabla v_h$, see again \eqref{almost_the same_and_control_of_energy}(iii),(iv).

The approximation also delivers a control on the jump set, see \eqref{almost_the same_and_control_of_energy}(ii), which corresponds to the fact that in the limit we expect functions which jump at most a finite number of times, see \eqref{limiting_admissible_pairs}, \eqref{SBV_2_isom}. The most delicate part in the derivation of the $ \Gamma$-liminf inequality for the surface energy in \eqref{limiting_one_dimensional_energy} is the  correct factor $2$ in front of $\H^0\big((J_{{y}}\cup J_{{R}})\setminus \partial^*I\big)$. This will be achieved by a contradiction based fundamentally on the following lemma: suppose that along a sequence the surface energy in a set $S^{2l}_{h}(x)$ (see \eqref{eq: D not})  was less than $\sim 2h^2$, i.e., so small such that the void cannot cut through the thin rod $\Omega_h$, see Figure \ref{fig:cases}. Then, the jump height of the sequences $ (w_h)_{h>0}$, $ (R_h)_{h>0}$ is small, see \eqref{eq: small jump} below. Later in Section \ref{gamma_liminf} this will allow us to exclude that the limiting functions $y$ and $R$ jump in the set $S^l_{h,\rho}(x)\subset S^{2l}_{h}(x)$.

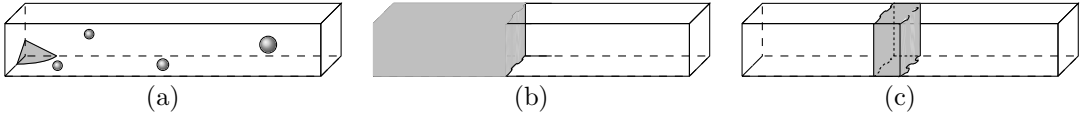
\begin{figure}[htp]
\begin{tikzpicture}[scale=.7]

\begin{scope}[shift={(7,0)}]

\draw(3,0) node[anchor=north]{${\rm (b)}$};

\draw[dashed](0,1,0)--++(0,0,-1)--++(0,-1,0)--++(0,0,1);

\draw[dashed](0,0,0)--++(6,0,0)--++(0,0,-1)--++(-6,0,0)--++(0,0,1);

\draw(6,0,0)--++(0,1,0)--++(0,0,-1)--++(0,-1,0)--++(0,0,1);

\draw(0,1,0)--++(6,0,0)--++(0,0,-1)--++(-6,0,0)--++(0,0,1);

\draw(0,0,0)--++(0,1,0)--++(6,0,0)--++(0,-1,0)--++(-6,0,0);

\pgfmathsetmacro{\thick}{50};

%\foreach \j in {0,...,\thick}{
%\draw[gray!50!white,opacity=.2](3,0,0) plot[ smooth, tension=2] coordinates { (3,\j/\thick,0) (3.03,\j/\thick,-.1) (3.05,\j/\thick,-.3) (2.95,\j/\thick,-.5)(3.05,\j/\thick,-.7)(2.97,\j/\thick,-.85)(3,\j/\thick,-1)};

%\draw[gray!50!white,opacity=.11](3,0,0) plot[ smooth, tension=2] coordinates {(2.5,\j/\thick,0)(2.52,\j/\thick,-.2)(2.47,\j/\thick,-.5)(2.52,\j/\thick,-.7)(2.49,\j/\thick,-.9)(2.5,\j/\thick,-1)};
%}

\draw[black,opacity=.5,fill opacity=.2,fill=gray!50!white](3,0,0)++(0,1,0)++(-.5,0,0)--++(0,-1,0)++(.5,0,0);
%\draw[black,opacity=.5](3,0,-1)--++(0,1,0);

\foreach \j in {0,...,\thick}{
%\draw[black,opacity=.5](3,0,0) plot[ smooth, tension=2] coordinates { (3,\j/\thick,0) (3.03,\j/\thick,-.1) (3.05,\j/\thick,-.3) (2.95,\j/\thick,-.5)(3.05,\j/\thick,-.7)(2.97,\j/\thick,-.85)(3,\j/\thick,-1)};

\draw[gray!50!white,opacity=.11]  plot[ smooth, tension=2] coordinates {(2.5,\j/\thick,0)(2.52,\j/\thick,-.2)(2.47,\j/\thick,-.5)(2.52,\j/\thick,-.7)(2.49,\j/\thick,-.9)(2.5,\j/\thick,-1)};
}

\draw[black,opacity=.5,fill opacity=.2](3,1,0)--++(-.5,0,0)-- plot[ smooth, tension=2] coordinates {(2.5,1,0)(2.52,1,-.2)(2.47,1,-.5)(2.52,1,-.7)(2.49,1,-.9)(2.5,1,-1)}--++(.5,0,0);

\draw[black,opacity=.5,fill opacity=.2,fill=gray!50!white]  plot[ smooth, tension=2] coordinates {(2.5,0,0)(2.52,0,-.2)(2.47,0,-.5)(2.52,0,-.7)(2.49,0,-.9)(2.5,0,-1)};

\draw[black,opacity=.5,fill opacity=.2](2.5,0,-1)--++(0,1,0);

\draw[black,opacity=.5,fill opacity=.2](2.5,0,-1)--++(.5,0,0);

\draw[gray!50!white,opacity=.2,fill opacity=.2,fill=gray!50!white](0,0,0)--++(2.5,0,0)--++(0,1,0)--++(-2.5,0,0)--++(0,-1,0);

\draw[gray!50!white,opacity=.2,fill opacity=.2,fill=gray!50!white](0,1,0)--++(2.5,0,0) plot[ smooth, tension=2] coordinates {(2.5,1,0)(2.52,1,-.2)(2.47,1,-.5)(2.52,1,-.7)(2.49,1,-.9)(2.5,1,-1)}--++(-2.5,0,0)--++(0,0,1);

\end{scope}

 \begin{scope}[shift={(14,0)}]
 
 \draw(3,0) node[anchor=north]{${\rm (c)}$};

\draw[dashed](0,1,0)--++(0,0,-1)--++(0,-1,0)--++(0,0,1);

\draw[dashed](0,0,0)--++(6,0,0)--++(0,0,-1)--++(-6,0,0)--++(0,0,1);

\draw(6,0,0)--++(0,1,0)--++(0,0,-1)--++(0,-1,0)--++(0,0,1);

\draw(0,1,0)--++(6,0,0)--++(0,0,-1)--++(-6,0,0)--++(0,0,1);

\draw(0,0,0)--++(0,1,0)--++(6,0,0)--++(0,-1,0)--++(-6,0,0);

\pgfmathsetmacro{\thick}{50};

\foreach \j in {0,...,\thick}{
\draw[gray!50!white,opacity=.2](3,0,0) plot[ smooth, tension=2] coordinates { (3,\j/\thick,0) (3.03,\j/\thick,-.1) (3.05,\j/\thick,-.3) (2.95,\j/\thick,-.5)(3.05,\j/\thick,-.7)(2.97,\j/\thick,-.85)(3,\j/\thick,-1)};

\draw[gray!50!white,opacity=.11](3,0,0) plot[ smooth, tension=2] coordinates {(2.5,\j/\thick,0)(2.52,\j/\thick,-.2)(2.47,\j/\thick,-.5)(2.52,\j/\thick,-.7)(2.49,\j/\thick,-.9)(2.5,\j/\thick,-1)};
}

\draw[black,opacity=.5,fill opacity=.2,fill=gray!50!white](3,0,0)--++(0,1,0)--++(-.5,0,0)--++(0,-1,0)--++(.5,0,0);
\draw[black,opacity=.5](3,0,-1)--++(0,1,0);

\foreach \j in {0,\thick}{
\draw[black,opacity=.5](3,0,0) plot[ smooth, tension=2] coordinates { (3,\j/\thick,0) (3.03,\j/\thick,-.1) (3.05,\j/\thick,-.3) (2.95,\j/\thick,-.5)(3.05,\j/\thick,-.7)(2.97,\j/\thick,-.85)(3,\j/\thick,-1)};
}

\draw[black,opacity=.5,fill opacity=.2,fill=gray!50!white](3,1,0)--++(-.5,0,0)-- plot[ smooth, tension=2] coordinates {(2.5,1,0)(2.52,1,-.2)(2.47,1,-.5)(2.52,1,-.7)(2.49,1,-.9)(2.5,1,-1)}--++(.5,0,0);

\draw[black,opacity=.5,dash pattern = on 1 pt off 1 pt]  plot[ smooth, tension=2] coordinates {(2.5,0,0)(2.52,0,-.2)(2.47,0,-.5)(2.52,0,-.7)(2.49,0,-.9)(2.5,0,-1)};

\draw[black,opacity=.5,dash pattern = on 1 pt off 1 pt](2.5,0,-1)--++(0,1,0);

\draw[black,opacity=.5,dash pattern = on 1 pt off 1 pt](2.5,0,-1)--++(.5,0,0);

\end{scope}

\begin{scope}[shift={(0,0)}]

\draw(3,0) node[anchor=north]{${\rm (a)}$};

\pgfmathsetmacro{\thicka}{120};

\foreach \j in {0,...,\thicka}{

\draw[gray!50!white,opacity=.11] (0,.3*\j/\thicka,-.6-.4*\j/\thicka) arc [start angle=-90,end angle=-20,x radius=0.8-.8*\j/\thicka,y radius=0.24-.24*\j/\thicka];
}

\draw[black,opacity=.5] (0,0,-.6)--(0,.3,-1);
%\draw[black,opacity=.5] (0,0,-.4)--(0,.5,-1);

\draw[black,opacity=.5] (.6,0,-1)--(0,.3,-1);
%\draw[black,opacity=.5] (.6,0,-1)--(0,.5,-1);

\draw[black,opacity=.5] (0,0,-.6) arc [start angle=-90,end angle=-20,x radius=0.8,y radius=0.24];

%%%

\draw[black,opacity=.5] (1.6,.8) circle (.09cm);

\shade[ball color=gray!50!white,opacity=0.2] (1.6,.8)circle (.08cm);

\draw[black,opacity=.5] (1,.2) circle (.09cm);

\shade[ball color=gray!50!white,opacity=0.2] (1,.2)circle (.08cm);

\draw[black,opacity=.5] (5,.6) circle (.16cm);

\shade[ball color=gray!50!white,opacity=0.2] (5,.6)circle (.15cm);

\draw[black,opacity=.5] (3,.22) circle (.11cm);

\shade[ball color=gray!50!white,opacity=0.2] (3,.22)circle (.1cm);

\draw[dashed](0,1,0)--++(0,0,-1)--++(0,-1,0)--++(0,0,1);

\draw[dashed](0,0,0)--++(6,0,0)--++(0,0,-1)--++(-6,0,0)--++(0,0,1);

\draw(6,0,0)--++(0,1,0)--++(0,0,-1)--++(0,-1,0)--++(0,0,1);

\draw(0,1,0)--++(6,0,0)--++(0,0,-1)--++(-6,0,0)--++(0,0,1);

\draw(0,0,0)--++(0,1,0)--++(6,0,0)--++(0,-1,0)--++(-6,0,0);

\end{scope}

\end{tikzpicture}
\caption{The void set is depicted in gray. In the situation of \eqref{eq: small jump/vol} or Remark \ref{rem: main rem}(i), only case (a) can occur, whereas (b),(c) are impossible.}
\label{fig:cases}
\end{figure}

\NNN Recall the stripes $S_h^{l}(x)$ and $S_{h,\rho}^{l}(x)$ defined in \eqref{eq: D not} and \eqref{eq: D not2}, respectively.\EEE

 \begin{proposition}[Jumps of \NNN blockwise \EEE Sobolev modifications]\label{prop: 2nd main} 
Let $(v_h,E_h)\in \mathcal{A}_h$ be a sequence from Proposition \ref{Lipschitz_replacement} and let $w_h\in SBV^2(\Omega_{h,\rho};\R^3)$, $R_h\in SBV^2(\Omega_{h,\rho};\R^{3\times 3})$ \EEE be the corresponding functions satisfying \eqref{almost_the same_and_control_of_energy}. Then, there exist open sets $E^*_h$ with $E_{h} \subset E^*_h \subset \Omega_h$, $\partial E^*_h\cap \Omega_{h}$ is a union of finitely many \NNN $C^2$\EEE-regular submanifolds, that satisfy 
\begin{align}\label{eq: void-newXXX}
h^{-3} \mathcal{L}^3(E^*_h\EEE\setminus E_{h}) \to 0,  \quad \quad      \liminf_{h \to 0} h^{-2}\H^2(\partial E^*_h\cap \Omega_h) \le \liminf_{h \to 0} h^{-2} \mathcal{G}^{\kappa_h}_{\rm{surf}}(E_h;\Omega_h) \,,
\end{align}
 such that  the following holds for any $l \ge  6Th$,  $x \in (2l,L-2l)$: For any  \NNN stripe \EEE $S_{h}^{2l}(x) $ with 
\begin{align}\label{eq: small jump/vol}
\begin{split}
\rm{(i)} \quad & \frac{1}{ ((1-\rho)h)^2} \H^2\big(  \partial E^*_h \cap S_{h}^{2l}(x)  \big) <2\,,\\[2pt] 
\rm{(ii)} \quad & \frac{\mathcal{L}^3(E_h^* \cap S_{h}^{2l}(x))}{\mathcal{L}^3(S_{h}^{2l}(x))}  \le \frac{1}{9}\,, 
\end{split}
\end{align}
 it holds that
\begin{align}\label{eq: small jump}
\frac{1}{h^2}\int_{S^l_{h,\rho}(x)   \cap  J_{w_h}   }  \sqrt{|[w_h]|} \, {\rm d} \mathcal{H}^2 + \frac{1}{h^2}\int_{S^l_{h,\rho}(x)   \cap  J_{R_h}   }  \sqrt{|[R_h]|} \, {\rm d} \mathcal{H}^2  \to 0\,.
\end{align}
 \end{proposition}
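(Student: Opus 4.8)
\textbf{Proof proposal for Proposition \ref{prop: 2nd main}.}

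The plan is to first construct the enlarged void sets $E_h^*$ and then prove the vanishing estimate \eqref{eq: small jump} by a contradiction argument localized on the stripes. For the construction of $E_h^*$, I would invoke \cite[Proposition 2.8]{KFZ:2021} (the "enlargement" procedure mentioned in Subsection \ref{plan_of_paper}), which for a set with bounded $\mathcal{G}^{\kappa_h}_{\rm surf}$-energy produces an open superset whose boundary is a finite union of regular submanifolds, with the volume difference controlled by the curvature term (hence $h^{-3}\mathcal{L}^3(E_h^*\setminus E_h)\to 0$ using $\kappa_h\to 0$ and the energy bound) and the perimeter not increased in the $\liminf$ sense, giving \eqref{eq: void-newXXX}. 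This is essentially a citation; the real content of the proposition is \eqref{eq: small jump}.

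For \eqref{eq: small jump}, fix $l\ge 6Th$ and $x\in(2l,L-2l)$, and suppose a stripe $S_h^{2l}(x)$ satisfies \eqref{eq: small jump/vol}(i)--(ii). The key geometric observation, illustrated in Figure \ref{fig:cases}, is that condition (i) forces the void $E_h^*$ to have, in \emph{most} cross-sectional slices $\{x_1=t\}$ of the stripe, a trace of small $\mathcal{H}^1$-measure — so small that it cannot disconnect the square cross-section $(-\frac h2,\frac h2)^2$. Quantitatively: by a Fubini/slicing argument the $\mathcal{H}^2$-measure of $\partial E_h^*\cap S_h^{2l}(x)$ bounds from below the integral over $t$ of $\mathcal{H}^1(\partial E_h^*\cap(\{t\}\times(-\frac h2,\frac h2)^2))$; combined with the relative-isoperimetric inequality in the square cross-section (of the type proved in Subsection \ref{sec: isoperimetric}), condition (i) being strictly below the threshold $2((1-\rho)h)^2/h^2$ — essentially twice the perimeter of a cross-section — means there is a set of slices of length comparable to $l$ in which the cross-section minus the void is connected and occupies almost the whole square; the volume bound (ii) then guarantees this "good slice" set is nonempty and large. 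On the complement $(\Omega_h\setminus\overline{E_h})\cap S_h^{2l}(x)$, after passing to $w_h$ and $R_h$ via Proposition \ref{Lipschitz_replacement}, the functions are $SBV^2$ with $J_{w_h}\cup J_{R_h}$ contained in finitely many cross-sectional planes $\partial Q_h$; since $l\ge 6Th$, there are at least a few full $T$-cuboids inside the stripe, and one can find a $T$-cuboid $Q_h(i)\subset S_h^{2l}(x)$ whose two bounding faces both lie in the "good slice" set and which contains no jump face of $w_h,R_h$ in its interior beyond these. Across such a cuboid the elastic energy is controlled (by \eqref{almost_the same_and_control_of_energy}(iv)), and a piecewise-rigidity / Poincaré-type estimate on the connected good cross-sections shows $w_h$ (resp.\ $R_h$) is $W^{1,2}$-close to a single rigid motion (resp.\ constant) there, so the one-sided traces on the jump faces bounding the stripe's interior cannot differ by much — quantitatively the jump height integrates against $h^{-2}$ to something vanishing, which after the concave $\sqrt{\cdot}$ (and Hölder, since $\mathcal{H}^2(J_{w_h}\cap S_h^{2l})\lesssim h^2$) yields \eqref{eq: small jump}.

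The main obstacle, I expect, is making the slicing/isoperimetric step and the subsequent "good cuboid" selection fully rigorous in the presence of a void with genuinely arbitrary geometry: one must convert the purely two-dimensional statement "small cross-sectional perimeter $\Rightarrow$ cross-section stays connected" into a statement holding on a \emph{positive-length, well-placed} set of slices, uniformly enough to straddle an entire $T$-cuboid, and then chain this with the piecewise rigidity estimate \cite[Theorem 2.1]{KFZ:2021} and the Korn inequality for small jump sets \cite{Cagnetti-Chambolle-Scardia} from Subsection \ref{sec: locest} to transfer $L^2$-closeness of gradients into smallness of the trace jump. A secondary technical point is tracking how the threshold $2$ in \eqref{eq: small jump/vol}(i) and the ratio $1/9$ in (ii) propagate through the isoperimetric constants — the factor $2$ is exactly "two copies of the cross-section perimeter", i.e.\ the minimal surface needed to cut the rod, and the argument is designed to break down precisely when that threshold is crossed (cases (b), (c) of Figure \ref{fig:cases}), so the constants must be chosen to leave a definite gap. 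Everything downstream is then a routine combination of \eqref{almost_the same_and_control_of_energy} with Hölder's inequality and the uniform bound $\mathcal{H}^2(\partial E_h^*\cap S_h^{2l}(x))\lesssim h^2$.
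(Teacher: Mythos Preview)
Your outline captures the two-stage structure (enlarge the void, then bound the jump heights) and correctly identifies the citation for $E_h^*$, but the geometric step and the jump-height step both deviate from what the paper actually does, and the latter has a real gap.

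\emph{Geometric step.} You propose a two-dimensional slicing argument on cross-sections $\{x_1=t\}$. The paper instead never slices: it splits $S_h^{2l}(x)$ along the $x_1$-axis into two shorter cuboids $U_\pm$, each with $\mathcal{H}^2(\partial E_h^*\cap U_\pm)<h^2$ (possible by \eqref{eq: small jump/vol}(i)), and applies the three-dimensional relative isoperimetric inequality on long cuboids (Proposition~\ref{nice_new_isoperimetric_inequality}) together with the volume bound \eqref{eq: small jump/vol}(ii) to deduce $\mathcal{L}^3(E_h^*\cap S_h^{3l/2}(x))\le 2C_{\rm iso}h^3$. This is exactly the threshold needed to place every $3T$-cuboid $Q_h^3(i)$ touching $S_{h,\rho}^l(x)$ into $I_h^{\rm g}\cup I_h^{\rm b}$, by the very definitions \eqref{good_cuboids}--\eqref{very_good_mildly_good_cuboids}. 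Your cross-sectional slicing might in principle be reworked to reach the same conclusion, but it is a detour and you do not explain how a set of ``good slices'' translates into the cuboid classification that governs $J_{w_h}$.

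\emph{Jump-height step.} This is where the gap lies. You phrase the argument as ``find a cuboid with no jump face in its interior,'' but the jumps of $w_h,R_h$ are fixed in advance at $\partial Q_h(i)$ for $i\in I_h^{\rm b}\cup I_h^{\rm u}$, cf.\ \eqref{eq: whjum}; the question is not whether they are present but how large they are. Once one knows there are no \emph{ugly} cuboids in the stripe, jumps occur only at boundaries of \emph{bad} cuboids, where by construction $w_h=r_{i,h}$ and $R_h=R_{i,h}$. The jump height is then exactly the difference of adjacent rigid motions (or a rigid motion versus a trace of $z_{j,h}$ if the neighbour is good), and this difference is controlled by Corollary~\ref{difference_rigid-motions}: $\|r_{i,h}-r_{i\pm 1,h}\|_\infty^2\le Ch^{-1}\varepsilon_{i,h}^{9/10}$, the suboptimal exponent $9/10$ coming from \eqref{L2_gradient_estimates}. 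Summing over the bounded number of bad cuboids and using $\varepsilon_{i,h}\le Ch^2\epsilon_h\le Ch^4$ gives $h^{-2}\int\sqrt{|[w_h]|}\le Ch^{-1/4}\epsilon_h^{9/40}\to 0$. Your proposal never isolates this mechanism; ``$W^{1,2}$-close to a single rigid motion'' plus H\"older does not by itself produce a vanishing quantity without the explicit $\varepsilon_{i,h}^{9/10}$ scaling from the rigidity estimate on bad cuboids.
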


\begin{remark}\label{rem: main rem}
{\normalfont
(i)  One can also prove a variant of Proposition \ref{prop: 2nd main}: if $2$ in the right hand side of \eqref{eq: small jump/vol}(i) is replaced by $1$, then assumption \eqref{eq: small jump/vol}(ii) is not needed,  cf.\ Figure \ref{fig:cases}. 
(ii)  In \eqref{eq: small jump}, $\sqrt{|[w_h]|}$ and $\sqrt{|[R_h]|}$  can be replaced by $|[w_h]|^{1-\beta}$
and  $|[R_h]|^{1-\beta}$ for any $\beta \in (0,1)$, up to adjusting the condition for $\kappa_h$ in \eqref{rate_1_gamma_h} (which will then depend also on $\beta$). \EEE We omit details as the choice $\beta = \frac{1}{2}$ is enough for our purposes. 

We refer to Remark \ref{rem: the lambda case} below for a short comment  how to prove (i) and (ii). 
}
\end{remark}

The rest of this section is  entirely devoted to the proofs of Propositions \ref{Lipschitz_replacement}--\ref{prop: 2nd main}. The proofs of our main compactness and $\Gamma$-convergence results then start in Section \ref{compactness}. In the proofs, we will send the parameters $h, \rho $ to zero (in this order). In order to avoid overburdening of notation, generic constants which are independent of $h, \rho$ but may depend on the fixed parameters $ T, L$ are denoted by $C$. We will use a subscript notation whenever we want to highlight the dependence of a particular constant on a specific parameter.

\subsection{Rigidity results}\label{se: rigi}

This subsection is devoted to recalling some rigidity results which are the basis for our proofs.

\emph{Geometric rigidity in variable domains:}  We first recall the result \NNN \cite[Theorem 2.1]{KFZ:2021}\EEE. For convenience, we will directly formulate it on the set  $\Omega_h$ and its subset  $\Omega_{h,\rho}$, see \eqref{eq: Omegah} and  \eqref{Omega_h_local}.  The behavior of deformations $v$ on (connected components of)  $\Omega_h \setminus \overline{E}\EEE$ might not be  rigid. We refer to \cite[Example~2.6]{KFZ:2021} for an explanation in that direction. A key observation in \cite{KFZ:2021} is that rigidity estimates can be obtained outside of a thickened version of the voids.  We start by formulating this result on the modification of the void sets.

\begin{proposition}[Thickening of sets]\label{prop:setmodification} 
Let $h,\rho>0$, let $\gamma \in (0,1)$. Then, there  exist a universal constant $C_0>0$,  $\eta_0 = \eta_{0}(\rho) \in (0,1)$, and for each $\eta\in (0,\eta_0]$ the following holds:\\
Given   $E\in \mathcal{A}_{\rm reg}(\Omega_h)$, we can find an open set $E_{h,\eta,\gamma}$ such that $E  \subset  E_{h,\eta,\gamma}  \subset \Omega_h$, $\partial E_{h,\eta,\gamma}\cap \Omega_h$ is a union of finitely many \NNN$C^2$\EEE-regular  submanifolds, and   
\begin{align}\label{eq: partition-new}
\begin{split}
{\rm (i)} & \ \    \mathcal{L}^3(E_{h,\eta,\gamma}\setminus E) \le h\eta\EEE \gamma^{1/{ 2\EEE}}  \mathcal{G}_{\rm surf}^{\gamma h^2}(E;\Omega_h),  \EEE \quad \quad \quad  \dist_\mathcal{H}(E, E_{h,\eta,\gamma}) \le h\eta\EEE \gamma^{1/{2\EEE}}\,,  \EEE %\notag
\\
{\rm (ii)} & \ \      \H^2(\partial E_{h,\eta,\gamma} \cap \Omega_h )   \leq (1+C_0\eta\EEE) \,  \mathcal{G}_{\rm surf}^{\gamma h^2}(E;\Omega_h)\,.
\end{split}
\end{align}
\end{proposition}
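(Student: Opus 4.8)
The plan is to obtain $E_{h,\eta,\gamma}$ by combining a covering argument with an isoperimetric selection of the right radii on which to perform the thickening, following the strategy of \cite[Proposition~2.8]{KFZ:2021}. First I would reduce to a local statement: since $\partial E\cap\Omega_h$ is a compact $C^2$-surface, cover it by finitely many small balls $B_{r_k}(p_k)$ with controlled overlap, and in each ball replace $E$ by a slightly thickened set $E\cup\bigcup_k B_{\sigma_k}(p_k)$ for suitable radii $\sigma_k$. The key point is to choose these radii so that the added volume and the Hausdorff displacement are controlled by $h\eta\gamma^{1/2}$ times the regularized surface energy $\mathcal{G}^{\gamma h^2}_{\rm surf}(E;\Omega_h)$, while the newly created boundary is only a multiplicative $(1+C_0\eta)$-perturbation of the old surface energy. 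This is precisely where the curvature term enters: the regularized surface measure $\mathcal{H}^2(\partial E\cap\Omega_h)+\gamma h^2\int|\bm A|^2$ controls, via the Michael–Simon / Allard-type isoperimetric inequality for surfaces with $L^2$ second fundamental form, the number and size of the connected components of $\partial E$ that can sit at a given small scale, so one can always find intermediate radii realizing the desired balance.

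The main steps, in order, are as follows. \emph{Step 1:} decompose $\partial E\cap\Omega_h$ into pieces according to dyadic length scales and apply the isoperimetric/monotonicity estimates from \cite[Subsection~2.?]{KFZ:2021} (the results the authors describe as being "of independent interest") to bound, for each scale, the volume of a tubular neighborhood of that piece by the local regularized perimeter times the scale. \emph{Step 2:} run a Vitali-type covering of $\partial E$ by balls whose radii are these scales, with bounded overlap, so that the total added volume telescopes to $\lesssim h\eta\gamma^{1/2}\,\mathcal{G}^{\gamma h^2}_{\rm surf}(E;\Omega_h)$ once one checks that the relevant scale is $\lesssim h\eta\gamma^{1/2}$ — here $\eta_0=\eta_0(\rho)$ appears because one must keep the thickened set inside $\Omega_h$ (equivalently, away from $\partial\Omega_h$ by a margin comparable to $\rho h$), and $c_\eta$ quantifies how the scale shrinks with $\eta$. \emph{Step 3:} define $E_{h,\eta,\gamma}$ as $E$ together with the chosen balls, smoothed slightly so that $\partial E_{h,\eta,\gamma}\cap\Omega_h$ is a finite union of regular submanifolds; verify (i) from Steps 1–2, and verify (ii) by noting that each added ball contributes at most its own surface area, which by the isoperimetric bound is $\le C_0\eta$ times the regularized perimeter of the portion of $\partial E$ it replaces, giving the $(1+C_0\eta)$ factor after summing over the bounded-overlap cover.

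I expect the main obstacle to be \emph{Step 2}, namely organizing the covering so that the good bounds survive summation: the voids may genuinely live at a continuum of scales and have infinitely many (as $h\to0$) connected components, as emphasized in Example~\ref{example:scaleandcardinality}, so one cannot simply treat components one by one. The delicate accounting is to show that the multiplicative errors $C_0\eta$ do not compound into something uncontrolled when there are many pieces — this is exactly why the estimate in (ii) is of the form $(1+C_0\eta)\,\mathcal{G}^{\gamma h^2}_{\rm surf}$ rather than $\mathcal{G}^{\gamma h^2}_{\rm surf}+C_0\eta h^2$, and it forces the use of the $L^2$ curvature bound (rather than an $L^1$ or perimeter-only bound) to get a Willmore-type isoperimetric inequality with the correct scaling. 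A secondary technical point is ensuring the smoothing in Step 3 preserves all the estimates; this is routine but must be done carefully so that $\partial E_{h,\eta,\gamma}$ genuinely consists of finitely many $C^2$ (or regular) submanifolds while the volume and surface bounds only degrade by an arbitrarily small amount, which can be absorbed into the constants.
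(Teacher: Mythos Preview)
Your proposal attempts to reconstruct from scratch the thickening construction of \cite{KFZ:2021}, whereas the paper does something much more modest: it simply invokes \cite[Theorem~2.1]{KFZ:2021} (equivalently \cite[Proposition~2.8]{KFZ:2021}) on the rescaled domain $h^{-1}\Omega_h$ and the rescaled void $h^{-1}E$, obtains a thickened set $E_{\eta,\gamma}\subset h^{-1}\Omega_h$ there, and then sets $E_{h,\eta,\gamma}:=hE_{\eta,\gamma}$. The only content of the paper's proof is the change-of-variables identity
\[
\mathcal{G}_{\rm surf}^{\gamma}(h^{-1}E;\,h^{-1}\Omega_h)=h^{-2}\,\mathcal{G}_{\rm surf}^{\gamma h^2}(E;\Omega_h),
\]
which explains why the curvature parameter becomes $\gamma h^2$ and why the factors of $h$ appear in \eqref{eq: partition-new}(i); the dependence $\eta_0=\eta_0(\rho)$ comes from the fact that ${\rm dist}(h^{-1}\partial\Omega_h,h^{-1}\Omega_{h,\rho})$ depends only on $\rho$. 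In short, the paper treats Proposition~\ref{prop:setmodification} as a black-box corollary of the cited result plus a scaling.

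Your sketch, by contrast, is a plausible outline of how one might \emph{prove} the cited result itself. That is a genuinely harder task, and several of the details you anticipate (the Vitali covering at multiple scales, the role of the Willmore term in the isoperimetric-type bound, the bounded-overlap accounting) are indeed the substantive ingredients of \cite{KFZ:2021}. But this is not what is being asked here, and your Step~2 concern about compounding errors is exactly the nontrivial part of that other paper, not something one should try to redo in the present proof. If you want to match the paper, the correct argument is: rescale by $h^{-1}$, cite, rescale back.
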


On the complement $\Omega_{h,\rho}\setminus \overline{E_{h,\eta,\gamma}}$ quantitative piecewise rigidity estimates  hold, as the following result shows. Recall the notation $S^{l}_{h}(x)$ in \eqref{eq: D not}.   

\begin{theorem}[Geometric rigidity in variable domains]\label{prop:rigidity}  
Let $h,\rho>0$, let $\gamma \in (0,1)$ and $l >0$. Then, there exist a universal constant $C_0>0$,   $\eta_0 = \eta_0(\rho)>0$,   and for each $\eta\in (0,\eta_0]$ there exists  $C_\eta = C_\eta(\eta,   \frac{l}{h})>0$ with $C_\eta \to \infty$ as $\eta \to 0$, $\tfrac{l}{h} \to 0$, or $\tfrac{l}{h} \to \infty$ such that the following holds:\\
\noindent For every  $E \in \mathcal{A}_{\rm reg}(\Omega_h)$, denoting by  $E_{h,\eta,\gamma}$ the set of Proposition \ref{prop:setmodification}, for every $U = S^{l}_{h}(x) \subset \Omega_h$ and $\tilde{U} = S^{l}_{h,\rho}(x) \subset \Omega_{h,\rho}$,  for the connected components $(\tilde U_j)_j$  of $\tilde U  \setminus \overline{{E_{h,\eta,\gamma}}}$ and for every $y  \in \NNN W^{1,2}\EEE(\Omega_h \setminus \overline{E};\R^3)$ there exist  corresponding rotations $(R_j)_j \subset SO(3)$ and   vectors $(b_j)_j\subset \R^3$ such that 
\begin{align}\label{eq: main rigitity}
{\rm (i)} & \ \  \sum\nolimits_j \int_{%P^{\eta,\gamma}_j
\tilde U_j}\big|{\rm sym}\big((R_j)^T \nabla y-\mathrm{Id}\big)\big|^2\,\mathrm{d}x 
\leq C_0 \big(1 +  C_\eta \gamma^{-15/2} h^{-3} \varepsilon   \big)\int_{U\setminus  \overline{E}} \mathrm{dist}^2(\nabla y,SO( 3))\,\mathrm{d}x\,,\notag\\
{\rm (ii)} & \ \   \sum\nolimits_{j}\int_{%P^{\eta,\gamma}_j
 \tilde U_j}    \big|(R_j)^T \nabla y-\mathrm{Id}\big|^2\,\mathrm{d}x \leq C_\eta  \gamma^{-3} \int_{ U\setminus  \overline{E} \EEE} \mathrm{dist}^2(\nabla y,SO( 3))\,\mathrm{d}x\,,
\notag\\
{\rm (iii)} & \ \   \sum\nolimits_{j}\int_{%P^{\eta,\gamma}_j
\tilde U_j}  \frac{1}{h^2} \big|  y- (R_jx+b_j) \big|^2 \,\mathrm{d}x \leq C_\eta  \gamma^{-\AAA 5\EEE} \int_{ U\setminus  \overline{E} \EEE} \mathrm{dist}^2(\nabla y,SO( 3))\,\mathrm{d}x\,,
\end{align}
 where for brevity $\eps := \int_{U \setminus \overline{E}} \dist^2(\nabla y, SO( 3)) \, {\rm d}x$. \EEE
\end{theorem}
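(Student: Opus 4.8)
The plan is to reduce Theorem \ref{prop:rigidity} to the global piecewise rigidity estimate \cite[Theorem 2.1]{KFZ:2021} applied on the rescaled cuboid, and then convert the three assertions into one another by elementary arguments. First I would rescale: setting $\tilde y(z) := \tfrac{1}{h} y(x_1, h x_2, h x_3)$ on the dilated stripe $T_{1/h}(U)$, which after translating becomes a cuboid of length $2l/h$ and unit square cross section, the void set $E$ gets mapped to $V := T_{1/h}(E)$ with $\H^2(\partial V) \le h^{-2} \H^2(\partial E)$ and $|\bm A_{\partial V}| \le h |\bm A_{\partial E}|$, so that $\mathcal{G}^{\gamma h^2}_{\rm surf}(E; U)$ transforms (up to the $h^{-2}$ factor already in \eqref{F_surf_energy}) into the regularized perimeter of $V$ with regularization parameter $\gamma$. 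The thickened set $E_{h,\eta,\gamma}$ of Proposition \ref{prop:setmodification} becomes precisely the enlargement produced by \cite[Proposition 2.8]{KFZ:2021} at scale $\eta$. Then \cite[Theorem 2.1]{KFZ:2021} applied to $\tilde y$ on this fixed-geometry domain yields, on each connected component $\tilde U_j$ of the complement of the (closed) enlarged void, a rotation $R_j$ and vector $\tilde b_j$ with the \emph{symmetrized} gradient estimate $\sum_j \int |{\rm sym}((R_j)^T \nabla \tilde y - {\rm Id})|^2 \le C_0(1 + C_\eta \gamma^{-15/2} \mu^{-1} \tilde\varepsilon)\,\tilde\varepsilon$ where $\tilde\varepsilon := \int \dist^2(\nabla \tilde y, SO(3))$ over the rescaled domain and $\mu$ is the volume fraction lower bound; undoing the scaling (which multiplies the elastic integrals by $h^{-1} \cdot h^2 = h$ and leaves dimensionless quantities, while $\tilde \varepsilon = h^{-3}\varepsilon$ because $\dist^2(\nabla_h y, SO(3))$ integrates the $h^{-3}$ Jacobian) produces exactly \eqref{eq: main rigitity}(i). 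The $\gamma$-powers and the $C_\eta$ blow-up as $\eta \to 0$ or $l/h \to 0,\infty$ are inherited verbatim from \cite[Theorem 2.1]{KFZ:2021}.

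For part (ii), the issue is that (i) only controls the symmetric part. The route is: on each component $\tilde U_j$, write $F_j := (R_j)^T \nabla \tilde y - {\rm Id}$; then $\dist(\nabla \tilde y, SO(3)) = \dist({\rm Id} + F_j, SO(3))$, and since $R_j^T \nabla \tilde y$ has determinant of the same sign a.e., a standard linear-algebra bound gives $|F_j - {\rm sym}(F_j)| = |{\rm skew}(F_j)| \le C(|{\rm sym}(F_j)| + \dist({\rm Id} + F_j, SO(3)))$ pointwise — this is the elementary fact that the skew part of $R^T M - {\rm Id}$ is controlled by its symmetric part plus the distance of $M$ to $SO(3)$, valid with a universal constant. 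Integrating and combining with (i) and the trivial bound $\int_{\tilde U_j} \dist^2(\nabla \tilde y, SO(3)) \le \tilde\varepsilon$ yields (ii), where one can afford to absorb the quadratic term $\mu^{-1}\tilde\varepsilon$ into the constant (since $\tilde\varepsilon$ is bounded via the energy, or one keeps it explicit as in the stated $C_\eta \gamma^{-3}$). Part (iii) is then a Poincaré-type step: having controlled $\|\nabla \tilde y - R_j\|_{L^2(\tilde U_j)}^2 \le C_\eta \gamma^{-3} \tilde \varepsilon$ by (ii), apply the Poincaré inequality on each $\tilde U_j$ — or rather on the slightly fattened cuboid using that the enlarged void occupies at most a controlled fraction, so the connected components are John domains with constant depending only on $\eta$ and $l/h$ — to find $b_j$ with $\int_{\tilde U_j} |\tilde y - (R_j z + b_j)|^2 \le C_\eta \gamma^{-3} \tilde\varepsilon$; rescaling back reinstates the $h^{-2}$ prefactor in \eqref{eq: main rigitity}(iii) and the slightly worse $\gamma$-power comes from the dependence of the Poincaré/John constant on the geometry of the modified domain as quantified in \cite{KFZ:2021}.

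The main obstacle I anticipate is the bookkeeping of the exponents of $\gamma$ and of the factor $h^{-3}\varepsilon$ in part (i): one must track carefully how the regularization parameter $\gamma$ enters the enlargement of Proposition \ref{prop:setmodification} (it appears as $\gamma^{1/2}$ in the volume and Hausdorff-distance bounds there) and how it propagates through the quantitative rigidity of \cite[Theorem 2.1]{KFZ:2021}, whose constant degrades like a negative power of $\gamma$ because the rigidity is applied on the $\gamma$-regularized geometry; getting the precise $\gamma^{-15/2}$, $\gamma^{-3}$, $\gamma^{-5}$ requires matching these scalings rather than any new idea. A secondary technical point is justifying that the connected components $\tilde U_j$ of $\tilde U \setminus \overline{E_{h,\eta,\gamma}}$ are good enough domains (uniformly John, with constant depending only on $\eta$ and $l/h$) for the Korn-type and Poincaré-type inequalities to hold with the claimed constants — but this is exactly the content of the geometric analysis underlying \cite[Theorem 2.1]{KFZ:2021}, so it can be quoted. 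Everything else is routine rescaling and the pointwise skew-vs-symmetric algebra.
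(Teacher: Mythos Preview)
Your overall plan --- rescale and quote \cite[Theorem 2.1]{KFZ:2021} --- is the paper's plan too, but two of your steps do not work as written.

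First, the rescaling. The paper uses the \emph{isotropic} map $w_h(x)=h^{-1}y(hx)$ on $h^{-1}\Omega_h$, so that $\nabla w_h(x)=\nabla y(hx)$ and $\dist(\nabla w_h,SO(3))$ is preserved pointwise; then $\int_{h^{-1}U}\dist^2(\nabla w_h,SO(3))=h^{-3}\varepsilon$ by the Jacobian $h^{-3}$. Your anisotropic map $T_{1/h}$ has Jacobian $h^{-2}$ and produces $\nabla\tilde y=(h^{-1}y_{,1}\,|\,y_{,2}\,|\,y_{,3})$, which is neither $\nabla y$ nor $\nabla_h y$; the distance to $SO(3)$ is not preserved and the identity $\tilde\varepsilon=h^{-3}\varepsilon$ fails for the rescaling you wrote.

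Second, and more seriously, the pointwise inequality you invoke to pass from (i) to (ii),
\[
|{\rm skew}(F_j)|\le C\big(|{\rm sym}(F_j)|+\dist({\rm Id}+F_j,SO(3))\big),
\]
is false. Take $F_j=R-{\rm Id}$ with $R\in SO(3)$ close to ${\rm Id}$: then $\dist({\rm Id}+F_j,SO(3))=0$ and $|{\rm sym}(F_j)|=|\tfrac12(R+R^T)-{\rm Id}|=O(|R-{\rm Id}|^2)$, whereas $|{\rm skew}(F_j)|=|\tfrac12(R-R^T)|\sim|R-{\rm Id}|$. So (ii) cannot be obtained from (i) by pointwise algebra; this is precisely why (ii) carries the degraded constant $C_\eta\gamma^{-3}$ rather than $C_0$. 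Likewise, your route to (iii) via Poincar\'e on the components $\tilde U_j$ presupposes that these are uniform John domains with a quantified constant, which is not part of the statement of \cite[Theorem 2.1]{KFZ:2021} and is essentially as hard as (iii) itself.

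In the paper, no inter-derivation is attempted: all three estimates \eqref{eq: main rigitity}(i)--(iii), with the stated powers $\gamma^{-15/2},\gamma^{-3},\gamma^{-5}$ (coming from $d=3$, $q=2$), are read off directly from the localized version \cite[Remark 2.10]{KFZ:2021} of \cite[Theorem 2.1]{KFZ:2021} after the isotropic rescaling, and the dependence of $C_\eta$ on $l/h$ enters because $\mathcal{L}^3(h^{-1}U)=2l/h$.
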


\begin{proof}[Proof of Proposition \ref{prop:setmodification}  and Theorem \ref{prop:rigidity}]
The result is essentially given in \cite[Theorem 2.1]{KFZ:2021}. We explain here the adaptations necessary to the present version of the result, in particular the scaling in terms of the small parameter $h>0$.

We apply \cite[Theorem 2.1]{KFZ:2021}  for $d=3$,  $q=2$, $\gamma \in (0,1)$ and $\varphi \equiv  \|\cdot\|_2$ on the sets $h^{-1} \Omega_h \subset \R^3$, $\tilde{\Omega} := h^{-1} \Omega_{h,\rho}$ and $h^{-1} E$. The constant $\eta_0$ therein depends only on ${\rm dist}(h^{-1} \partial \Omega_h,h^{-1}\Omega_{h,\rho})$ and  can  thus be chosen  depending only on $\rho$, see \eqref{eq: Omegah} and  \eqref{Omega_h_local}.  Then, the result  first provides a set  $E_{\eta,\gamma} $ with $h^{-1} E  \subset  E_{\eta,\gamma}    \subset h^{-1}\Omega_h$ such that by \cite[(2.2)]{KFZ:2021}   and a scaling argument the set  $E_{h,  \eta,\gamma} := h E_{  \eta,\gamma}$ satisfies \eqref{eq: partition-new}. Here, we particularly note that a change of variables implies 
$$\mathcal{G}_{\rm surf}^{\gamma}(h^{-1} E; h^{-1}\Omega_h) = h^{-2} \mathcal{G}_{\rm surf}^{\gamma h^2}(E;\Omega_h)\,. $$
Then, \eqref{eq: main rigitity} follows from a  localized version of     \cite[(2.3)]{KFZ:2021}, see \cite[Remark 2.10]{KFZ:2021}, first applied on the sets  $h^{-1}U$, $h^{-1} \tilde{U}$ and the function $ w_h(x) = \frac{1}{h}  y(hx)$ for $x \in h^{-1}(\Omega_h \setminus \overline{E})$,   and then again rescaled. The factors $h^{-3}$ and $h^{-2}$ in  \eqref{eq: main rigitity}$\rm{(i),(iii)}$, respectively, ensure that all inequalities in  \eqref{eq: main rigitity} are scaling invariant in the sense that the constants are independent of $h$. The factors $\gamma^{-15/2}$, \AAA $\gamma^{-3}$ and $\gamma^{-5}$ \EEE follow from the choice of $d=3$ and $q=2$.  We further  observe that the constant $C_\eta$ depends on $\eta$ and  $\mathcal{L}^3(h^{-1} U)$, see \cite[Remark 2.10]{KFZ:2021}. As  $\mathcal{L}^3(h^{-1} U) = \frac{ 2\EEE l}{h}$,    we indeed get that $C_\eta$  depends on $\eta$  and   the ratio  of $l$ and $h$.  
\end{proof}

In the proofs below, we will apply this rigidity result  on the $T$-cuboids $\mathcal{Q}_{h}$ introduced in  \eqref{overlapping_rectangles} or in finite unions of such cuboids. For these sets, we observe that the constant $C_\eta$ depends only on $\eta$ and $T$ (as the corresponding $l$ is approximately   $Th$).  Moreover, we will choose  $\eta$ and  $\gamma$ depending on the regime of the elastic energy $\eps$ such that $C_\eta \gamma^{-15/2}h^{-3}\varepsilon \le 1$ and $C_\eta  \gamma^{\AAA -5\EEE}  \le \eps^{-\theta}$ for some $\theta >0$ small. Thus, we obtain a sharp control on symmetrized gradients in terms of $\eps$ (see \eqref{eq: main rigitity}$\rm{(i)}$), while the rigidity estimate in \eqref{eq: main rigitity}$\rm{(ii)}$ and the Poincar\'e-type estimate \eqref{eq: main rigitity}$\rm{(iii)}$ yield control of order $\eps^{1-\theta}$, hence being  suboptimal in the exponent.

\emph{Korn and Poincar\'e inequalities:} The issue of the suboptimal exponent can be remedied provided that the surface measure of the void set is small. This relies on delicate Korn and Poincar\'e inequalities in the space  $GSBD^2$, see  \cite{DalMaso:13} for the definition of this space. We formulate the result of \cite[Theorem~1.1, Theorem~1.2]{Cagnetti-Chambolle-Scardia} in a simplified setting which does not involve functions in $GSBD^2$ but only $SBV^2$-functions. In the following, we say that $a\colon \R^{3} \to \R^{3}$ is an \emph{infinitesimal rigid motion} if $a$ is affine with ${\rm sym}(\nabla a)= 0$.

\begin{theorem}[Korn inequality for functions with small jump set]\label{th: kornSBDsmall}
Let $U \subset \R^{3}$ be a bounded Lipschitz domain. Then, there exists a constant $c = c(U)>0$ such that for all  $u \in SBV^2(U;\R^{3})$  there  exists  a set of finite perimeter $\omega \subset U$ with 
\begin{align}\label{eq: R2main}
\mathcal{H}^{ 2}(\partial^* \omega) \le c\mathcal{H}^{ 2}(J_u)\,, \ \ \ \ \mathcal{L}^{ 3}(\omega) \le c(\mathcal{H}^{ 2}(J_u))^{3/2}\,,
\end{align}
and an infinitesimal rigid motion $a$ such that
\begin{align}\label{eq: main estmain}
 (\mathrm{diam}(U))^{-1}\Vert u - a \Vert_{L^{2}(U \setminus \omega)} + \Vert \nabla u - \nabla a \Vert_{L^{2}(U \setminus \omega)}\le c \Vert {\rm sym}(\nabla u) \Vert_{L^2(U)}.
\end{align}
Moreover, there exists $v \in W^{1,2}(U;\R^{3})$ such that $v\equiv  u$ on $U \setminus \omega$ and
\begin{align*}
%\label{eq: main estmain-Sobolev}
\Vert {\rm sym}(\nabla v) \Vert_{L^2(U)} \le c \Vert {\rm sym}(\nabla  u)\Vert_{L^2(U)}.
\end{align*}
Furthermore\EEE, if $u \in L^\infty(U;\R^{ 3\EEE})$ one has  $\|v\|_{L^\infty(U)}\leq  \| u \|_{L^\infty(U)}$.  
\end{theorem}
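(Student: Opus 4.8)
The plan is to prove Theorem~\ref{th: kornSBDsmall} by invoking the full $GSBD^2$ statement of \cite[Theorems~1.1 and~1.2]{Cagnetti-Chambolle-Scardia} and then checking that the simplified conclusions we claim are exactly the specializations of that result to the smoother class $SBV^2(U;\R^3)\subset GSBD^2(U)$. First I would observe that any $u\in SBV^2(U;\R^3)$ belongs to $GSBD^2(U)$ with approximate symmetrized gradient $e(u)={\rm sym}(\nabla u)$ and jump set $J_u$ coinciding (up to $\H^2$-null sets) in the two frameworks, so that the right-hand sides $\Vert {\rm sym}(\nabla u)\Vert_{L^2(U)}$ and $\H^2(J_u)$ make sense and agree with the quantities appearing in the cited theorems. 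Applying \cite[Theorem~1.1]{Cagnetti-Chambolle-Scardia} on the bounded Lipschitz domain $U$ then yields directly the exceptional set $\omega$ of finite perimeter with the measure and perimeter bounds \eqref{eq: R2main} and an infinitesimal rigid motion $a$ satisfying \eqref{eq: main estmain}; here one uses that the constant there depends only on $U$ (through its Lipschitz character and diameter), which is why we may write $c=c(U)$.

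Next I would address the modification statement: \cite[Theorem~1.2]{Cagnetti-Chambolle-Scardia} provides, for a $GSBD^2$ function with small jump set, an extension $v$ agreeing with $u$ outside a small set and lying in $W^{1,2}$ with controlled symmetrized gradient. Since our $u$ is already $SBV^2$ (in particular its jump part of the distributional derivative is absolutely continuous with respect to $\H^2\mres J_u$) the resulting $v$ inherits $v\in W^{1,2}(U;\R^3)$, $v\equiv u$ on $U\setminus\omega$, and $\Vert {\rm sym}(\nabla v)\Vert_{L^2(U)}\le c\Vert {\rm sym}(\nabla u)\Vert_{L^2(U)}$; one should note that the exceptional set in Theorem~1.2 can be taken to be the same $\omega$ as in Theorem~1.1 (or enlarged by a set with the same type of bounds, which can then be re-absorbed into $\omega$ up to a larger constant). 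Finally, for the $L^\infty$-bound, I would recall that the construction in \cite{Cagnetti-Chambolle-Scardia} produces $v$ by a local averaging/reflection procedure on a covering adapted to $\omega$; since averaging is order-preserving and convex combinations of values of $u$ cannot exceed $\Vert u\Vert_{L^\infty(U)}$ in modulus, one obtains $\Vert v\Vert_{L^\infty(U)}\le \Vert u\Vert_{L^\infty(U)}$. If the cited construction is not literally an averaging, one can instead post-compose with a smooth truncation $\tau_M$ that is $1$-Lipschitz and equals the identity on the ball of radius $M=\Vert u\Vert_{L^\infty(U)}$; this leaves $v$ unchanged on $U\setminus\omega$ (where $|v|=|u|\le M$), keeps $v\in W^{1,2}$, and does not increase $\Vert {\rm sym}(\nabla v)\Vert_{L^2}$ beyond a harmless application of the chain rule, after which a slightly larger constant absorbs the change.

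The main obstacle I anticipate is purely bookkeeping rather than conceptual: one must verify that the constants in \cite[Theorems~1.1 and~1.2]{Cagnetti-Chambolle-Scardia}, originally stated for $GSBD^2$ on Lipschitz domains, depend on $U$ only through quantities (diameter, Lipschitz constant of the boundary charts) that are stable under the scalings and the cuboid/stripe geometries we later apply this to, and that the exceptional sets and rigid motions from the two theorems can be taken compatibly. A secondary subtlety is the $L^\infty$-preservation claim, which is not explicitly stated in \cite{Cagnetti-Chambolle-Scardia} in this form; the truncation argument above is the safe route and I would include it explicitly, remarking that it does not affect the other estimates since $\tau_M$ is $1$-Lipschitz and $e(\tau_M\circ v)=(\mathrm{D}\tau_M(v))\,e(v)\,(\mathrm{D}\tau_M(v))^T$ pointwise a.e., bounded in norm by $|e(v)|$.
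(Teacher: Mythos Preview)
Your overall approach---invoke \cite[Theorems~1.1 and~1.2]{Cagnetti-Chambolle-Scardia} and use $SBV^2\subset GSBD^2$---is exactly what the paper does, and the bookkeeping you describe about constants and compatibility of the exceptional sets is fine. Two points, however, need correction.

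First, the volume estimate $\mathcal{L}^{3}(\omega)\le c(\mathcal{H}^{2}(J_u))^{3/2}$ is \emph{not} stated in \cite[Theorem~1.1]{Cagnetti-Chambolle-Scardia}; only the perimeter bound $\mathcal{H}^{2}(\partial^*\omega)\le c\mathcal{H}^{2}(J_u)$ appears there. The paper obtains the volume bound from the perimeter bound via the (standard) isoperimetric inequality, and you should say this explicitly rather than asserting that both bounds are ``direct''.

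Second, your fallback truncation argument for the $L^\infty$-bound contains a genuine error. The formula you write,
\[
e(\tau_M\circ v)=(\mathrm{D}\tau_M(v))\,e(v)\,(\mathrm{D}\tau_M(v))^T,
\]
is false: the chain rule gives $\nabla(\tau_M\circ v)=\mathrm{D}\tau_M(v)\,\nabla v$, and taking the symmetric part of a product $AB$ does \emph{not} yield $A\,\mathrm{sym}(B)\,A^T$. More importantly, composition with a Lipschitz map does not in general control the symmetrized gradient by the original symmetrized gradient---this is precisely the obstruction that distinguishes $BD$-type spaces from $BV$-type spaces (truncation works in $BV$ but not in $BD$). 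So the truncation route cannot rescue the $L^\infty$-claim; you must rely on the actual structure of the construction in \cite{Cagnetti-Chambolle-Scardia} (which does proceed by local averaging and hence preserves $L^\infty$-bounds, as your first option correctly anticipates).
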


This follows from \cite{Cagnetti-Chambolle-Scardia} (for $d=3$ and $p=2$) by the fact that $SBV^2 \subset GSBD^2$.   Note that in  \cite[Theorem 1.1]{Cagnetti-Chambolle-Scardia}  $\mathcal{L}^3(\omega) \le c(\mathcal{H}^{2}(J_u))^{3/2}$ has not been  stated explicitly,  but it readily follows from $\mathcal{H}^{2}(\partial^* \omega) \le c\mathcal{H}^{2}(J_u)$ and the isoperimetric inequality. The result is indeed only relevant if $\mathcal{H}^{2}(J_u)$ is small since otherwise $\omega = U$ is possible and the statement is empty. In a similar fashion to the reasoning in  Theorem \ref{prop:rigidity}, it is a standard matter to see that the constant in \eqref{eq: R2main}--\eqref{eq: main estmain} is invariant under translation and rescaling of the domain.

\emph{Difference of affine maps:} To estimate the difference of rigid motions, we   make use of the following elementary lemma. By $B_r(x) \subset \R^3$ we denote the open ball  centered at $x \in \R^3$ with radius $ r>0$. 
\begin{lemma}[Estimate on  affine maps]\label{lemma: rigid motions}
Let $\delta >0$. Then there exists a constant $C>0$ only depending on  $\delta$ such for every $G \in \R^{3\times  3}$, $b \in \R^{3}$, $x\in \R^3$, and $E \subset B_r(x)$ for some $r >0$ with $\mathcal{L}^{ 3}(E) \ge \delta r^{ 3}$ we have
$$\Vert G \cdot  + b \Vert_{L^\infty(B_r(x))} \le Cr^{- 3} \mathcal{L}^{ 3}(E)^{1/2} \Vert G \cdot  + b \Vert_{L^2(E)}, \quad |G| \le C  r^{-4}\mathcal{L}^{3}(E)^{ 1/2} \Vert G \cdot  + b \Vert_{L^2(E)}\,.   $$
\end{lemma}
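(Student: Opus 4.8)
The plan is to reduce the statement to the unit ball $B_1(0)$ by translation and scaling, and then to establish the resulting inequality on $B_1(0)$ by a compactness argument in the finite-dimensional space of affine maps $\R^3 \to \R^3$.

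\textbf{Reduction to $r=1$, $x=0$.} Writing $f(z) := Gz + b$ and setting $\tilde f(z) := f(x+rz) = (rG)z + (Gx+b)$ together with $\tilde E := r^{-1}(E-x) \subset B_1(0)$, one checks the scaling identities $\|f\|_{L^\infty(B_r(x))} = \|\tilde f\|_{L^\infty(B_1(0))}$, $\mathcal{L}^3(E) = r^3 \mathcal{L}^3(\tilde E)$, and $\|f\|_{L^2(E)} = r^{3/2}\|\tilde f\|_{L^2(\tilde E)}$. Hence both claimed inequalities are invariant under these transformations, and it suffices to prove them in the case $r=1$, $x=0$, where now $\delta \le \mathcal{L}^3(\tilde E) \le \mathcal{L}^3(B_1(0))$.

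\textbf{Core step.} I would prove the following \emph{claim}: there exists $c = c(\delta)>0$ such that $\|g\|_{L^\infty(B_1(0))} \le c^{-1}\|g\|_{L^2(F)}$ for every affine map $g\colon \R^3 \to \R^3$ and every measurable $F \subset B_1(0)$ with $\mathcal{L}^3(F) \ge \delta$. Argue by contradiction: if it fails, pick affine maps $g_n$ with $\|g_n\|_{L^\infty(B_1(0))} = 1$ and sets $F_n \subset B_1(0)$ with $\mathcal{L}^3(F_n) \ge \delta$ and $\|g_n\|_{L^2(F_n)} \to 0$. Since the affine maps form a finite-dimensional space and the uniform norm is equivalent to any other on it, up to a subsequence $g_n \to g$ uniformly on $\overline{B_1(0)}$ with $\|g\|_{L^\infty(B_1(0))} = 1$; in particular $\int_{F_n}|g|^2 \to 0$. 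On the other hand $g\not\equiv 0$, so $\{g = 0\}$ is contained in a proper affine subspace of $\R^3$ and is therefore $\mathcal{L}^3$-null, which gives $\mathcal{L}^3(\{|g|<\varepsilon\}\cap B_1(0)) < \delta/2$ for $\varepsilon$ small enough; then $\int_{F_n}|g|^2 \ge \varepsilon^2(\mathcal{L}^3(F_n) - \delta/2) \ge \varepsilon^2\delta/2 > 0$, a contradiction. From the claim and $\delta \le \mathcal{L}^3(\tilde E)$, the first inequality follows after undoing the scaling, with $C$ depending only on $\delta$.

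\textbf{Bound on $|G|$.} Using $f(z) - f(z') = G(z-z')$, for any unit vector $v$ take $z = x + \tfrac r2 v$ and $z' = x - \tfrac r2 v$, both in $B_r(x)$; then $|Gv| = r^{-1}|f(z) - f(z')| \le 2r^{-1}\|f\|_{L^\infty(B_r(x))}$, so $|G| \le 2r^{-1}\|f\|_{L^\infty(B_r(x))} \le 2C\, r^{-4}\mathcal{L}^3(E)^{1/2}\|f\|_{L^2(E)}$ by the first part. The only points demanding care are the bookkeeping of the powers of $r$ in the reduction and the uniformity of $c(\delta)$ over all admissible sets $F$; the latter is exactly what the compactness argument delivers, since it produces a single limiting affine map $g$ independent of the particular sets $F_n$. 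I do not anticipate any serious obstacle.
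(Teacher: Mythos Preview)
Your proof is correct. The paper's own proof reduces to $r=1$, $x=0$ by the same scaling and translation argument and then simply cites \cite[Lemma~3.4]{Friedrich-Solombrino} (applied with $d=3$ and $\psi(t)=t^2$, together with H\"older's inequality) for the unit-ball estimate. Your compactness argument in the finite-dimensional space of affine maps is an elementary, self-contained substitute for that citation; the derivation of the bound on $|G|$ from the $L^\infty$ bound is likewise a direct replacement for what the cited lemma delivers. So the overall route is the same, with your version trading an external reference for a short direct argument.
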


\begin{proof}
For $r = 1$ and $x=0$, the result is a special case of \cite[Lemma 3.4]{Friedrich-Solombrino}, applied  (for $d=3$) to $\psi(t) := t^2$. In particular, in  \cite[(3.4)]{Friedrich-Solombrino} we also use H\"older's inequality to get the control in terms of  the quantity \EEE $\mathcal{L}^{3}(E)^{ 1/2} \Vert G \cdot  + b \Vert_{L^2(E)}$. For general $r>0$ and $x \in \R^3$, the estimates follow from a standard scaling and translation argument. 
\end{proof}

\subsection{Isoperimetric inequalities on cuboids}\label{sec: isoperimetric}

In this subsection, we present a special case of a relative isoperimetric inequality in cuboids  that are long in one direction, where the isoperimetric constant is independent of the length.  Such an inequality is possible for sets that have small relative perimeter as, in this case, isoperimetric sets will concentrate at one of the corners or at one of the short edges of the long cuboid. Indeed, under the small perimeter constraint, the relative boundary cannot span a cross section of the cuboid, see Figure~\ref{fig:isoperimetric}. \EEE  As the result may be interesting in its own right, it is formulated in arbitrary space dimension on the cuboids $S^l_{ \sigma \EEE}(x_0):=x_0+(-l,l)\times (-\tfrac{ \sigma \EEE}{2},\tfrac{ \sigma \EEE}{2})^{d-1}$, consistent with the notation \eqref{eq: D not}. Afterwards, we will present two consequences which will be used in the sequel.

\begin{proposition}[Relative isoperimetric inequality on cuboids]\label{nice_new_isoperimetric_inequality}
Let $l, \sigma >0$ with   $l/ \sigma \geq 1$,   and  
$x_0\in \R^d$. Then,  there exists a dimensional constant $C_{\rm iso}\ge 1$ independent of $l$ and $ \sigma \EEE$  such that for every set of finite perimeter $P\subset S^l_{ \sigma \EEE}(x_0)$ with
\begin{equation}\label{small_perimeter_condition}
\mathcal{H}^{d-1}\big(\partial^*P\cap S^l_{ \sigma \EEE}(x_0)\big)<  \sigma \EEE^{d-1}\,,
\end{equation}
it holds that  
\begin{equation}\label{long_relative_isoperimetric_inequality}
\min\{\L^d(P),\L^d(S^l_{ \sigma \EEE}(x_0)\setminus P)\}\leq C_{\rm iso} \sigma \EEE \H^{d-1}(\partial^* P\cap S^l_{ \sigma \EEE}(x_0))\,.
\end{equation}
\end{proposition}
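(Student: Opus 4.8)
The plan is to reduce the relative isoperimetric inequality on the long thin cuboid $S^l_\sigma(x_0)$ to a combination of the classical relative isoperimetric inequality on a \emph{bounded} reference cube and a ``slicing'' argument that controls how the set $P$ can extend along the long direction. First I would rescale so that $\sigma = 1$ (replacing $x_0$ by $0$): the inequalities \eqref{small_perimeter_condition} and \eqref{long_relative_isoperimetric_inequality} are invariant under the anisotropic dilation that maps $S^l_\sigma(x_0)$ to $S^{l/\sigma}_1(0)$ up to dimensional constants, so it suffices to treat $S^l_1(0) = (-l,l) \times (-\tfrac12,\tfrac12)^{d-1}$ with $l \ge 1$ and $\mathcal{H}^{d-1}(\partial^* P \cap S^l_1(0)) < 1$. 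Without loss of generality assume $\mathcal{L}^d(P) \le \mathcal{L}^d(S^l_1(0) \setminus P)$, i.e. $\mathcal{L}^d(P) \le l$.

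The key structural observation is that under the small-perimeter hypothesis, for $\mathcal{H}^1$-a.e.\ $t \in (-l,l)$ the slice $P_t := \{z' \in (-\tfrac12,\tfrac12)^{d-1} : (t,z') \in P\}$ either has small $(d-1)$-measure or co-small $(d-1)$-measure; more precisely, by the relative isoperimetric inequality on the cross-sectional cube $(-\tfrac12,\tfrac12)^{d-1}$ together with the coarea/slicing formula for sets of finite perimeter, one gets $\min\{\mathcal{L}^{d-1}(P_t), 1 - \mathcal{L}^{d-1}(P_t)\} \le c_d\, \mathcal{H}^{d-2}(\partial^* P_t)$, and $\int_{-l}^{l} \mathcal{H}^{d-2}(\partial^* P_t)\,\mathrm{d}t \le \mathcal{H}^{d-1}(\partial^* P \cap S^l_1(0)) < 1$. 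Hence the ``bad set'' $B := \{t : \mathcal{L}^{d-1}(P_t) > \tfrac12\}$ has small measure — in fact one can show $\mathcal{L}^1(B)$ is controlled by the total perimeter — and on its complement the slices are genuinely small. The second ingredient is a connectedness-type argument: because the relative perimeter is strictly less than $\sigma^{d-1} = 1$ (the measure of a full cross section), the set $P$ cannot have ``full'' slices on both far-left and far-right portions simultaneously, which is exactly why case (b)/(c) in the analogous Figure~\ref{fig:cases} are excluded; this confines the ``large'' part of $P$ to a subcuboid of bounded length.

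Concretely, I would argue as follows. If $\mathcal{L}^1(B) \le 1$, then $\mathcal{L}^d(P) \le \mathcal{L}^1(B) \cdot 1 + \int_{(-l,l)\setminus B} \mathcal{L}^{d-1}(P_t)\,\mathrm{d}t \le \mathcal{L}^1(B) + \int_{(-l,l)\setminus B} c_d \mathcal{H}^{d-2}(\partial^* P_t)\,\mathrm{d}t$; bounding $\mathcal{L}^1(B)$ by $2\int \mathcal{H}^{d-2}(\partial^* P_t)\,\mathrm{d}t$ (using that $t \in B$ forces $\mathcal{H}^{d-2}(\partial^* P_t) \ge c_d^{-1}\min\{\mathcal{L}^{d-1}(P_t), 1-\mathcal{L}^{d-1}(P_t)\} = c_d^{-1}(1-\mathcal{L}^{d-1}(P_t))$, which is not immediately $\ge$ a constant — here one needs the small-perimeter hypothesis to rule out $\mathcal{L}^{d-1}(P_t)$ being close to $1$ for most $t$, which is where the case analysis enters). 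The cleaner route is: since $\mathcal{H}^{d-1}(\partial^* P \cap S^l_1(0)) < 1$, the set of $t$ where $P_t$ is ``essentially full'' and the set where it is ``essentially empty'' cannot both have positive measure unless separated by a slice that itself costs a full cross section; thus one of them is empty. In the case where no slice is essentially full, every slice satisfies $\mathcal{L}^{d-1}(P_t) \le \tfrac12$, so $\min\{\mathcal{L}^{d-1}(P_t), 1-\mathcal{L}^{d-1}(P_t)\} = \mathcal{L}^{d-1}(P_t) \le c_d \mathcal{H}^{d-2}(\partial^* P_t)$ for a.e.\ $t$, and integrating over $(-l,l)$ gives $\mathcal{L}^d(P) = \int_{-l}^l \mathcal{L}^{d-1}(P_t)\,\mathrm{d}t \le c_d \int_{-l}^l \mathcal{H}^{d-2}(\partial^* P_t)\,\mathrm{d}t \le c_d\, \mathcal{H}^{d-1}(\partial^* P \cap S^l_1(0))$, which is \eqref{long_relative_isoperimetric_inequality} with $C_{\rm iso} = c_d$ (and the $\sigma$ restored by scaling).

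\textbf{Main obstacle.} The delicate point is the slicing/connectedness dichotomy: justifying rigorously that, under $\mathcal{H}^{d-1}(\partial^* P \cap S^l_\sigma(x_0)) < \sigma^{d-1}$, one cannot have both a ``mostly full'' region and a ``mostly empty'' region of slices — equivalently that the essential boundary $\partial^* P$ does not disconnect $S^l_\sigma$ across a full cross-section. I would handle this via the one-dimensional structure of $t \mapsto \mathcal{L}^{d-1}(P_t)$ (which is $BV$ with total variation bounded by the relative perimeter, by the slicing theory of sets of finite perimeter) combined with the continuity-type statement that a jump of this function of size close to $\sigma^{d-1}$ would force $\mathcal{H}^{d-1}(\partial^* P \cap \{t\}\times(-\tfrac\sigma2,\tfrac\sigma2)^{d-1})$ comparable to $\sigma^{d-1}$, contradicting the hypothesis; the small-perimeter condition then pins $\mathcal{L}^{d-1}(P_t)$ to stay on one side of $\tfrac12\sigma^{d-1}$ for a.e.\ $t$ after possibly replacing $P$ by its complement. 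Once this is established the rest is the routine slicing computation above. I would state the cross-sectional relative isoperimetric inequality as a known fact (classical, e.g.\ via the compactness/trace theory for $BV$ on the cube), and keep the dimensional constant tracking implicit.
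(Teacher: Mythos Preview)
Your approach slices in the \emph{long} direction $x_1$, which is genuinely different from the paper's proof: the paper argues by induction on the dimension and, in the inductive step, slices in a \emph{short} direction $x_{d+1}$, so that the slice $P_{t_0}$ again lives in a long thin cuboid of one lower dimension and the inductive hypothesis applies. This choice is not cosmetic. In your setup the integration variable $t$ ranges over $(-l,l)$, so to bound $\mathcal L^d(P) = \int_{-l}^{l} f(t)\,\mathrm{d}t$ you must control $f$ uniformly in $t$; in the paper's setup $t$ ranges over $(-\tfrac12,\tfrac12)$, and a single good slice together with the oscillation bound $|\mathcal L^d(P_t)-\mathcal L^d(P_{t_0})|\le \alpha$ yields $\mathcal L^{d+1}(P) \le (c_d+1)\alpha$ simply by integrating over an interval of length $1$.

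The gap in your argument is precisely the dichotomy step. You assert that under $\alpha<1$ and $\mathcal L^d(P)\le l$ one has $f(t)\le\tfrac12$ for a.e.\ $t$ (after possibly swapping $P$ with its complement), and you propose to deduce this from the $BV$ structure of $t\mapsto f(t)$ and the observation that a large jump of $f$ would cost a near-full cross section of perimeter. But the oscillation/total-variation bound only gives $\mathrm{osc}(f)\le\alpha$, hence $f\le\tfrac12+\alpha$ once you know $\mathrm{ess\,inf}\,f\le\tfrac12$; nothing you have written excludes $f$ sitting in $(\tfrac12,\tfrac12+\alpha)$ on a set $B$ of positive measure, and your ``large jump'' argument only rules out jumps of size close to $1$, not continuous crossing of the level $\tfrac12$. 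On $B$ the cross-sectional isoperimetric inequality gives $g(t)\ge(1-M)/c_{d-1}$ with $M=\mathrm{ess\,sup}\,f$, and combining with $\int g\le\alpha$ one obtains $|B|\le c_{d-1}\alpha/(1-M)$; since $M$ may approach $\min\{1,\tfrac12+\alpha\}$, the resulting bound on $\mathcal L^d(P)$ carries a factor comparable to $(1-\alpha)^{-1}$, which blows up as $\alpha\to 1^-$ and therefore does \emph{not} yield a dimensional constant $C_{\rm iso}$ independent of $\alpha$. I do not see how to close this uniformly with the tools you list; slicing in the short direction, as the paper does, sidesteps the issue because the final integration is over a unit interval and the inductive hypothesis already absorbs the potentially large slice volume.
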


\begin{figure}[htp]
\begin{tikzpicture}

\draw(3,0) node[anchor=north]{${\rm (a)}$};

\pgfmathsetmacro{\thick}{50};

\draw[dashed](0,1,0)--++(0,0,-1)--++(0,-1,0)--++(0,0,1);

\draw[dashed](0,0,0)--++(6,0,0)--++(0,0,-1)--++(-6,0,0)--++(0,0,1);

\draw(6,0,0)--++(0,1,0)--++(0,0,-1)--++(0,-1,0)--++(0,0,1);

\draw(0,1,0)--++(6,0,0)--++(0,0,-1)--++(-6,0,0)--++(0,0,1);

\draw(0,0,0)--++(0,1,0)--++(6,0,0)--++(0,-1,0)--++(-6,0,0);

\pgfmathsetmacro{\thicka}{120};

\foreach \j in {0,...,\thicka}{

\draw[gray!50!white,opacity=.2] (0,.5*\j/\thicka,-.4-.6*\j/\thicka) arc [start angle=-90,end angle=0,x radius=0.8-.8*\j/\thicka,y radius=0.24-.24*\j/\thicka];
}

\draw[black,opacity=.5] (0,0,-.4)--(0,.5,-1);
%\draw[black,opacity=.5] (0,0,-.4)--(0,.5,-1);

\draw[black,opacity=.5] (.57,0,-1)--(0,.5,-1);
%\draw[black,opacity=.5] (.6,0,-1)--(0,.5,-1);

\draw[black,opacity=.5] (0,0,-.4) arc [start angle=-90,end angle=0,x radius=0.8,y radius=0.24];

\begin{scope}[shift={(8,0)}]

\draw[dashed](0,1,0)--++(0,0,-1)--++(0,-1,0)--++(0,0,1);

\draw[dashed](0,0,0)--++(6,0,0)--++(0,0,-1)--++(-6,0,0)--++(0,0,1);

\draw(6,0,0)--++(0,1,0)--++(0,0,-1)--++(0,-1,0)--++(0,0,1);

\draw(0,1,0)--++(6,0,0)--++(0,0,-1)--++(-6,0,0)--++(0,0,1);

\draw(0,0,0)--++(0,1,0)--++(6,0,0)--++(0,-1,0)--++(-6,0,0);

\pgfmathsetmacro{\thick}{50};

%\foreach \j in {0,...,\thick}{
%\draw[gray!50!white,opacity=.2](3,0,0) plot[ smooth, tension=2] coordinates { (3,\j/\thick,0) (3.03,\j/\thick,-.1) (3.05,\j/\thick,-.3) (2.95,\j/\thick,-.5)(3.05,\j/\thick,-.7)(2.97,\j/\thick,-.85)(3,\j/\thick,-1)};

%\draw[gray!50!white,opacity=.11](3,0,0) plot[ smooth, tension=2] coordinates {(2.5,\j/\thick,0)(2.52,\j/\thick,-.2)(2.47,\j/\thick,-.5)(2.52,\j/\thick,-.7)(2.49,\j/\thick,-.9)(2.5,\j/\thick,-1)};
%}

\draw[black,opacity=.5,fill opacity=.2,fill=gray!50!white](3,0,0)++(0,1,0)++(-.5,0,0)--++(0,-1,0)++(.5,0,0);
%\draw[black,opacity=.5](3,0,-1)--++(0,1,0);

\foreach \j in {0,...,\thick}{
%\draw[black,opacity=.5](3,0,0) plot[ smooth, tension=2] coordinates { (3,\j/\thick,0) (3.03,\j/\thick,-.1) (3.05,\j/\thick,-.3) (2.95,\j/\thick,-.5)(3.05,\j/\thick,-.7)(2.97,\j/\thick,-.85)(3,\j/\thick,-1)};

\draw[gray!50!white,opacity=.2]  plot[ smooth, tension=2] coordinates {(2.5,\j/\thick,0)(2.52,\j/\thick,-.2)(2.47,\j/\thick,-.5)(2.52,\j/\thick,-.7)(2.49,\j/\thick,-.9)(2.5,\j/\thick,-1)};
}

\draw[black,opacity=.5,fill opacity=.2](3,1,0)--++(-.5,0,0)-- plot[ smooth, tension=2] coordinates {(2.5,1,0)(2.52,1,-.2)(2.47,1,-.5)(2.52,1,-.7)(2.49,1,-.9)(2.5,1,-1)}--++(.5,0,0);

\draw[black,opacity=.5,fill opacity=.2,fill=gray!50!white]  plot[ smooth, tension=2] coordinates {(2.5,0,0)(2.52,0,-.2)(2.47,0,-.5)(2.52,0,-.7)(2.49,0,-.9)(2.5,0,-1)};

\draw[black,opacity=.5,fill opacity=.2](2.5,0,-1)--++(0,1,0);

\draw[black,opacity=.5,fill opacity=.2](2.5,0,-1)--++(.5,0,0);

\draw[gray!50!white,opacity=.2,fill opacity=.2,fill=gray!50!white](0,0,0)--++(2.5,0,0)--++(0,1,0)--++(-2.5,0,0)--++(0,-1,0);

\draw[gray!50!white,opacity=.2,fill opacity=.2,fill=gray!50!white](0,1,0)--++(2.5,0,0) plot[ smooth, tension=2] coordinates {(2.5,1,0)(2.52,1,-.2)(2.47,1,-.5)(2.52,1,-.7)(2.49,1,-.9)(2.5,1,-1)}--++(-2.5,0,0)--++(0,0,1);

\draw(3,0) node[anchor=north]{${\rm (b)}$};

\end{scope}

\end{tikzpicture}
\caption{Void set contained in a thin rod with (a) relative perimeter less than $ \sigma^{d-1}$, (b) relative perimeter bigger than $ \sigma^{d-1}$.}
\label{fig:isoperimetric}
\end{figure}
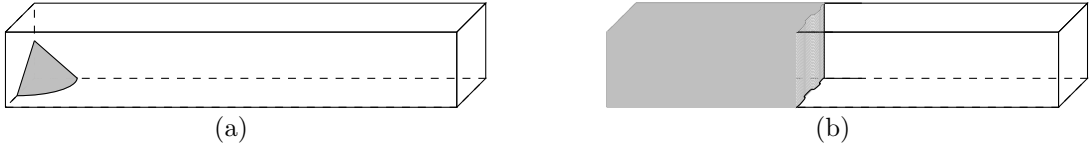

\EEE

\begin{proof}
Without loss of generality, after translation and uniform rescaling, we can assume that $x_0=0$, $ \sigma=1$, and  can without restriction reduce to showing the following assertion on the cuboid   $Q_{l}:=S^l_{1}(0)=(-l,l)\times(-\tfrac{1}{2},\tfrac{1}{2})^{d-1}$.  There exists a dimensional constant $c_d>0$ such that  for each $l \geq 1$ and for every set of finite perimeter $P\subset Q_{l}$ with 
\begin{equation*}
%\label{simplified_hypotheses_for_isop_ineq}
\L^d(P)\leq l\, \quad \text{and} \quad \H^{d-1}(\partial^* P\cap Q_{l})  <1\,,
\end{equation*}
there holds that
\begin{equation}\label{rescaled_long_relative_isoperimetric}
\L^d(P)\leq c_d\H^{d-1}(\partial^* P\cap Q_{l})\,.
\end{equation}
Note that we can assume that $l \gg 1$ as, given $l_0 >1$, we have that for all $1\leq l \leq l_0$ the statement follows directly from the classical relative isoperimetric inequality
\begin{align*}
\min\{\L^d(P),\L^d( Q_l%S^l_{1}(x_0)
\setminus P)\}\leq C_{\rm iso}(l)  \H^{d-1}(\partial^* P\cap Q_l%S^l_{1 }(x_0)
)\,,
\end{align*}
and the fact that that $C_{\rm iso}(l)  \leq C_0 l_0$, where $C_0>0$ is a dimensional constant. 

We prove the assertion by induction on the dimension $d$, the case $d=1$ being a trivial statement. Assume now that \eqref{rescaled_long_relative_isoperimetric} is true for some $d\geq 1$, and for the inductive step let us prove it in dimension $d+1$. For this purpose, let $P\subset Q^{d+1}_{l}:=(-l,l)\times(-\tfrac{1}{2},\tfrac{1}{2})^d$ be a set of finite perimeter, with
\begin{equation}\label{inductive hypothesis}
\L^{d+1}(P)\leq l\, \quad \text{and} \quad \H^{d}(\partial^* P\cap Q^{d+1}_{l})  <1\,.
\end{equation}
For every $t\in (-\tfrac{1}{2},\tfrac{1}{2})$, let us set for notational simplicity $Q^d_{l,t}:=Q^{d+1}_{l}\cap \{x_{d+1}=t\}$ and also $P_t:=P\cap Q^d_{l,t}$. By general slicing properties of sets of finite perimeter, $P_t$ is a subset of finite perimeter in $Q^d_{l,t}$ for  $\L^1$-\EEE a.e. $t\in (-1/2,1/2)$. Let now $t_0\in (-1/2,1/2)$ be such that $P_{t_0}$ is of finite perimeter and also, as a consequence of the coarea formula (cf.\ \cite[(18.25)]{maggi2012sets}) and \eqref{inductive hypothesis},  
\begin{align}\label{minimality_condition}
\H^{d-1}(\partial^*P_{t_0}\cap Q^{d}_{l,t_0})&\leq \int_{-1/2}^{1/2}\H^{d-1}(\partial^*P_{t}\cap Q^d_{l,t})\, \mathrm{d}t\le  \H^d(\partial^\ast P \cap Q^{d+1}_{l})  <1\,. 
\end{align}
 By %yet another application of the coarea formula and 
\eqref{inductive hypothesis}, we also find  that for $\L^1$-a.e.\ $t\in (-1/2,1/2)$,  
\begin{equation}\label{difference_in volume_slices}
\begin{split}
\big|\mathcal{L}^d(P_{t})-\mathcal{L}^d(P_{t_0})\big|&\leq \mathcal{H}^{d}\Big(\partial^*P\cap\big((-l,l)\times (-1/2,1/2)^{d-1}\times[t_0\wedge t, t_0\vee t ] \big)  \Big)\\&\leq \mathcal{H}^d(\partial^*P \cap Q_l^{d+1}) <1\,.
\end{split}
\end{equation}
 Note that the first inequality in \eqref{difference_in volume_slices} is immediate for smooth sets via a projection argument. In the general case, it can be derived by the density of smooth sets  and Fubini's theorem.  
Therefore, we get
\begin{equation}\label{bound_on_Ld_of_slices}
\mathcal{L}^d(P_{t_0})- 1
< \mathcal{L}^d(P_{t}) <
\mathcal{L}^d(P_{t_0})+ 1 \quad \text{for $\L^1$-a.e. } t\in(-1/2,1/2)\,.
\end{equation}
We now claim that 
\begin{equation}\label{P_t_0_is_the_small_one}
\mathcal{L}^d(P_{t_0})\leq \mathcal{L}^d(Q^d_{l,t_0}\setminus P_{t_0})\,.
\end{equation}
Indeed, if \eqref{P_t_0_is_the_small_one} was not true, then by \eqref{minimality_condition} and the inductive hypothesis we would have
\begin{align}\label{the_wrong_isoperimetric}
\mathcal{L}^d(Q^d_{l,t_0}\setminus P_{t_0})\leq c_d \H^{d-1}(\partial^*P_{t_0}\cap Q^{d}_{l,t_0}) < c_d\,
\end{align}
 Then, by choosing $l\gg 1$, \eqref{bound_on_Ld_of_slices} together with \eqref{the_wrong_isoperimetric} would imply that
\begin{align}\label{the_wrong_volume_slice}
\mathcal{L}^d(P_{t})  > \L^d(P_{t_0})-1  > 2l-c_d-1> l  \quad \text{for $\L^1$\EEE-a.e. }t\in (-1/2,1/2)\,.
\end{align}
Thus, by \eqref{the_wrong_volume_slice} and Fubini's theorem, we would get
\begin{equation*}
%\label{contradiction}
\L^{d+1}(P)=\int_{-1/2}^{1/2}\L^d(P_t)\,\mathrm{d}t> l\,,
\end{equation*}
contradicting the first assumption in \eqref{inductive hypothesis}. Therefore, indeed \eqref{P_t_0_is_the_small_one} holds true. By our inductive hypothesis and \eqref{minimality_condition} this yields
\begin{equation*}
\mathcal{L}^d(P_{t_0})\leq c_d \H^{d-1}(\partial^*P_{t_0}\cap Q^{d}_{l,t_0})\leq c_d\H^d(\partial^*P\cap Q_{l}^{d+1})\,.
\end{equation*}
The last inequality together with \eqref{difference_in volume_slices} implies that 
\begin{equation*}%\label{true_isoperimetric_ineq}
\mathcal{L}^d(P_{t})\leq (c_d+1) \H^{d}(\partial^*P\cap Q^{d+1}_{l})\quad \text{for $\L^1$\EEE-a.e. }t\in(-1/2,1/2)\,.
\end{equation*}
Therefore, using Fubini's theorem again, we get
\begin{equation*}
\L^{d+1}(P)=\int_{-1/2}^{1/2}\L^d(P_t)\,\mathrm{d}t\leq (c_d+1) \H^{d}(\partial^*P\cap Q^{d+1}_{l})\,,
\end{equation*}
finishing the induction and hence the proof. \qedhere
\end{proof}

We proceed with two corollaries:  Corollary \ref{cor: dom1} and Corollary \ref{cor: dom2} describe how a long cuboid can be partitioned by a void set.  If the void set has relative perimeter  less than the area of the cross section $\sigma^{d-1}$, then there is a very large dominant component and some small components whose volume can be controlled by the relative perimeter of the void set. The same is true if the void set has relative perimeter between $ \sigma^{d-1}$ and $2 \sigma^{d-1}$ but small volume. If we drop the volume assumption, there may be two different large components - one consisting of the void set and a large complementary component.  If the void set has relative perimeter bigger than $2 \sigma^{d-1}$, then, even if the void set has small volume, it may separate the cuboid into two large complementary components. Some indicative   cases   are illustrated in Figure~\ref{fig:cases}: (a) Void set with perimeter less than $ \sigma^{d-1}$ or small volume and perimeter less than $2 \sigma^{d-1}$. (b) Void set with perimeter less than $2 \sigma^{d-1}$ with large volume. (c) Void set with perimeter bigger than $2 \sigma^{d-1}$ with   small  volume.

\begin{corollary}[Dominant component 1]\label{cor: dom1}
There exists $T_0 \in \N$ with the following property. Let  $l, \sigma\EEE>0$ with $l/ \sigma  \ge T_0$. Let $(P_j)_{j \ge 1}$ be a Caccioppoli partition of $S^l_{ \sigma}(x_0)$ with
\begin{align}\label{small_perimeter_condition-new}
\mathcal{H}^{d-1}\Big(\bigcup_{j \ge 1}\partial^*P_j\cap S^l_{ \sigma }(x_0)\Big) <  \sigma\EEE^{d-1} 
\end{align}
and $\L^d(P_1) \ge \L^d(P_j)$ for all $j \ge 2$.  Then,
\begin{align}\label{eq: DDD1}
\L^d( S^l_{ \sigma }(x_0) \setminus P_1  ) \le  C_{\rm iso} \sigma \H^{d-1}\big( \partial^*P_1 \cap S^l_{ \sigma\EEE }(x_0)\big)\, \quad \text{and} \quad  \L^d(P_1) > \frac{1}{2}\L^d(S^l_{ \sigma}(x_0))\,,  
\end{align}
where $C_{\rm iso}$ is the constant in \eqref{long_relative_isoperimetric_inequality}. 
\end{corollary}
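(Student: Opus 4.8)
The plan is to derive both inequalities in \eqref{eq: DDD1} directly from Proposition \ref{nice_new_isoperimetric_inequality}, applied separately to \emph{each} member of the partition; the role of the hypothesis $l/\sigma \ge T_0$ will be to force $P_1$ to be the dominant cell. I would first fix $T_0 := \lceil C_{\rm iso}\rceil + 1$ once and for all, so that $T_0 > C_{\rm iso}$ (recall $C_{\rm iso}\ge 1$). Since $P_j$ is one of the cells of the partition, one has $\partial^* P_j \cap S^l_\sigma(x_0) \subset \bigcup_{k\ge 1}\partial^* P_k \cap S^l_\sigma(x_0)$, so \eqref{small_perimeter_condition-new} gives $\mathcal{H}^{d-1}(\partial^* P_j \cap S^l_\sigma(x_0)) < \sigma^{d-1}$ for every $j$. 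Hence the smallness condition \eqref{small_perimeter_condition} is met by each $P_j$, and Proposition \ref{nice_new_isoperimetric_inequality} yields
\[
  \min\big\{\mathcal{L}^d(P_j),\, \mathcal{L}^d\big(S^l_\sigma(x_0)\setminus P_j\big)\big\} \le C_{\rm iso}\,\sigma\, \mathcal{H}^{d-1}\big(\partial^* P_j \cap S^l_\sigma(x_0)\big) \qquad \text{for every } j\ge 1.
\]

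The heart of the proof is to show that the minimum on the left, for $j=1$, is attained by $\mathcal{L}^d(S^l_\sigma(x_0)\setminus P_1)$, equivalently that $\mathcal{L}^d(P_1) > \tfrac12 \mathcal{L}^d(S^l_\sigma(x_0)) = l\sigma^{d-1}$, which is precisely the second claim in \eqref{eq: DDD1}. I would argue by contradiction: suppose $\mathcal{L}^d(P_1) \le \mathcal{L}^d(S^l_\sigma(x_0)\setminus P_1)$. Then the displayed inequality for $j=1$, combined with \eqref{small_perimeter_condition-new}, gives $\mathcal{L}^d(P_1) < C_{\rm iso}\sigma^d$; as $P_1$ has the largest volume, $\mathcal{L}^d(P_j) \le \mathcal{L}^d(P_1) < C_{\rm iso}\sigma^d$ for all $j$. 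By the choice of $T_0$ and the assumption $l \ge T_0\sigma > C_{\rm iso}\sigma$, this is strictly less than $l\sigma^{d-1} = \tfrac12 \mathcal{L}^d(S^l_\sigma(x_0))$, so the minimum above equals $\mathcal{L}^d(P_j)$ for every $j$. Summing over $j$ and using the perimeter additivity of a Caccioppoli partition, namely $\sum_{j\ge1}\mathcal{H}^{d-1}(\partial^*P_j \cap S^l_\sigma(x_0)) = 2\,\mathcal{H}^{d-1}(\bigcup_{j\ge1}\partial^*P_j\cap S^l_\sigma(x_0))$, together with \eqref{small_perimeter_condition-new}, I obtain
\[
  2l\sigma^{d-1} = \mathcal{L}^d\big(S^l_\sigma(x_0)\big) = \sum_{j\ge1}\mathcal{L}^d(P_j) \le 2C_{\rm iso}\,\sigma\, \mathcal{H}^{d-1}\Big(\bigcup_{j\ge1}\partial^*P_j\cap S^l_\sigma(x_0)\Big) < 2C_{\rm iso}\sigma^d\,,
\]
hence $l/\sigma < C_{\rm iso} < T_0$, contradicting $l/\sigma \ge T_0$. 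Therefore $\mathcal{L}^d(P_1) > \mathcal{L}^d(S^l_\sigma(x_0)\setminus P_1)$, which is the second inequality in \eqref{eq: DDD1}; feeding this back into the isoperimetric bound for $j=1$ turns it into $\mathcal{L}^d(S^l_\sigma(x_0)\setminus P_1) \le C_{\rm iso}\,\sigma\, \mathcal{H}^{d-1}(\partial^*P_1\cap S^l_\sigma(x_0))$, which is the first.

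The only genuinely non-bookkeeping step is the contradiction argument ruling out a small $P_1$; this is where the long-cuboid geometry $l/\sigma \ge T_0$ enters crucially, since it requires simultaneously controlling \emph{all} (possibly infinitely many) cells via the cellwise isoperimetric inequality and the perimeter additivity of the partition, rather than $P_1$ alone. Once the dominant component is identified, both inequalities of \eqref{eq: DDD1} are simply read off from the isoperimetric bound, so I expect no further difficulty.
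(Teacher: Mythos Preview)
Your argument is correct and follows essentially the same route as the paper's proof: apply Proposition~\ref{nice_new_isoperimetric_inequality} to each cell, argue by contradiction that $P_1$ must occupy more than half the volume, and derive the contradiction by summing the cellwise isoperimetric bounds together with the factor-two identity $\sum_j \mathcal{H}^{d-1}(\partial^*P_j\cap S^l_\sigma(x_0)) = 2\,\mathcal{H}^{d-1}\big(\bigcup_j\partial^*P_j\cap S^l_\sigma(x_0)\big)$ for Caccioppoli partitions. The only cosmetic difference is that the paper observes directly that $\mathcal{L}^d(P_1)\le \tfrac12\mathcal{L}^d(S^l_\sigma(x_0))$ forces $\mathcal{L}^d(P_j)\le \mathcal{L}^d(S^l_\sigma(x_0)\setminus P_j)$ for all $j$, whereas you first pass through the bound $\mathcal{L}^d(P_1)<C_{\rm iso}\sigma^d$; both are equivalent.
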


\begin{proof} 
In view of \eqref{small_perimeter_condition-new}, \eqref{small_perimeter_condition} holds for each $P_j$ and Proposition \ref{nice_new_isoperimetric_inequality} is applicable for each $P_j$. To prove the statement, it  suffices to show that 
$$\L^d(P_1) > \L^d(S^l_{ \sigma \EEE}(x_0)\setminus P_1)\,.   $$
Assume by contradiction that this was false.  By $\L^d(P_1) \ge \L^d(P_j)$ for all $j \ge 2$, this would imply $\L^d(P_j) \le \L^d(S^l_{ \sigma \EEE}(x_0)\setminus P_j)$ for all $j \ge 1$.  But then we calculate using \eqref{long_relative_isoperimetric_inequality} and \eqref{small_perimeter_condition-new},
\begin{align*}
2l \sigma^{d-1} &=\L^d (S^l_{ \sigma}(x_0)) = \sum_{j \ge 1} \L^d (P_j) = \sum_{j \ge 1}  \min\{\L^d(P_j),\L^d(S^l_{ \sigma}(x_0)\setminus P_j)\} \\ & \le \sum_{j \ge 1}   C_{\rm iso}\sigma \H^{d-1}(\partial^* P_{ j}\cap S^l_{ \sigma}(x_0)) =  2C_{\rm iso}\sigma\H^{d-1}\big(\bigcup_{j\geq 1}\partial^* P_{j}\cap S^l_{\sigma}(x_0)\big) < 2C_{\rm iso}   \sigma^d\,, 
\end{align*}
where we also used the local structure of Caccioppoli partitions, see \cite[Theorem 4.17]{Ambrosio-Fusco-Pallara:2000}. \EEE 
By choosing $T_0\in \N$ large enough depending only on $C_{\rm iso}$ such that $ l/\sigma \ge T_0 > C_{\rm iso}$, this yields a contradiction. 
\end{proof}

\begin{corollary}[Dominant component 2]\label{cor: dom2}
There exists $T_0 \in \N$  with the following property. Let   $l, \sigma>0$ with $l/ \sigma \ge T_0$. Let $E\in \mathcal{P}(S^l_{ \sigma}(x_0))$  and let $(P_j)_{j \ge 1}$ be the connected components of  $S^l_{ \sigma\EEE}(x_0) \setminus {E}$ in the sense that $(P_j)_{j \ge 1} \cup \lbrace E\rbrace$ forms a Caccioppoli partition of $S^l_{ \sigma\EEE}(x_0)$ with 
\begin{align}\label{loc str}
\mathcal{H}^{d-1}\big( (\partial^* P_j   \setminus \partial^* E) \cap S^l_{ \sigma\EEE}(x_0) \big) = 0 \quad \text{ for all $j \ge 1$}\,.
\end{align}
 Suppose  that 
\begin{align}\label{small_perimeter_condition-new-new}
\mathcal{H}^{d-1}\Big(\bigcup_{j \ge 1}\partial^* P_j\cap S^l_{ \sigma\EEE}(x_0)\Big) < 2  \sigma\EEE^{d-1},  \quad \mathcal{L}^d(E) \le \frac{1}{4} \L^d( S^l_{ \sigma\EEE}(x_0)) 
\end{align}
and $\L^d(P_1) \ge \L^d(P_j)$ for all $j \ge 2$. Then,
\begin{align}\label{eq: DDD2}
\L^d( S^l_{ \sigma\EEE}(x_0) \setminus P_1  ) \le  C_{\rm iso} \sigma\EEE \H^{d-1}\Big( \bigcup_{j \ge 1}\partial^* P_j   \cap S^l_{ \sigma\EEE}(x_0)\Big) + \mathcal{L}^d(E) \, \ \text{and} \  \L^d(P_1) > \frac{1}{2}\L^d(S^l_{ \sigma\EEE}(x_0))\,,  
\end{align}
where $C_{\rm iso}$ is the constant in \eqref{long_relative_isoperimetric_inequality}. 
\end{corollary}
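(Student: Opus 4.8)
The plan is to follow the scheme of the proof of Corollary~\ref{cor: dom1}, now accounting for the larger perimeter threshold $2\sigma^{d-1}$ and for the presence of the void $E$. After translating and rescaling we may assume $x_0=0$ and $\sigma=1$; write $S:=S^l_1(0)$, so that $\L^d(S)=2l$ with $l\ge T_0$, and set $p_j:=\H^{d-1}(\partial^* P_j\cap S)$ for $j\ge 1$. The starting observation is that, by \eqref{loc str} and the structure theorem for Caccioppoli partitions \cite[Theorem~4.17]{Ambrosio-Fusco-Pallara:2000}, the sets $\partial^* P_j\cap S$ are pairwise $\H^{d-1}$-essentially disjoint: a common reduced-boundary point of $P_j$ and $P_k$ with $j\ne k$ would have density $\tfrac12$ in each, hence density $0$ in $E$, so it could not lie on $\partial^* E$, contradicting \eqref{loc str}. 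Consequently $\sum_{j\ge 1}p_j=\H^{d-1}\big(\bigcup_{j\ge 1}\partial^* P_j\cap S\big)<2$ by \eqref{small_perimeter_condition-new-new}, and in particular at most one index, call it $j_0$, can satisfy $p_{j_0}\ge 1$.

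Next I would isolate two consequences of Proposition~\ref{nice_new_isoperimetric_inequality}. First, it applies to every $P_j$ with $p_j<1$ and gives $\min\{\L^d(P_j),\L^d(S\setminus P_j)\}\le C_{\rm iso}\,p_j$. Second, if a heavy-perimeter index $j_0$ (with $p_{j_0}\ge 1$) is present, put $Q:=S\setminus(E\cup P_{j_0})$; since $\{P_j\}_j\cup\{E\}$ partitions $S$, one has $Q=\bigcup_{j\ne j_0}P_j$ up to a null set, and by subadditivity of the perimeter $\H^{d-1}(\partial^* Q\cap S)\le\sum_{j\ne j_0}p_j<2-1=1$, so Proposition~\ref{nice_new_isoperimetric_inequality} applies to $Q$ (hence also to its complement $E\cup P_{j_0}$), yielding $\min\{\L^d(Q),\,\L^d(E)+\L^d(P_{j_0})\}\le C_{\rm iso}\sum_{j\ne j_0}p_j$.

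I would then locate the dominant component, claiming that in every case there is a unique index $j^*$ with $\L^d(P_{j^*})>l=\tfrac12\L^d(S)$ and that $j^*=1$; uniqueness is immediate since two such components would have volumes summing beyond $\L^d(S)$, and $j^*=1$ then follows from $\L^d(P_1)\ge\L^d(P_j)$ ($j\ge 2$) together with the same disjointness. For existence I distinguish three cases. \emph{(a)} If $p_j<1$ for all $j$: were no $P_j$ to exceed volume $l$, then $\L^d(P_j)\le\L^d(S\setminus P_j)$ for every $j$, and summing $\L^d(P_j)\le C_{\rm iso}p_j$ would give $2l-\L^d(E)=\sum_j\L^d(P_j)\le C_{\rm iso}\sum_j p_j<2C_{\rm iso}$; since $\L^d(E)\le\tfrac14\L^d(S)=\tfrac l2$ this forces $\tfrac32 l<2C_{\rm iso}$, impossible for $l\ge T_0>2C_{\rm iso}$, and here $j^*=1$ has $p_1<1$. \emph{(b)} If $p_{j_0}\ge 1$ and $\L^d(Q)\le l$: then $\L^d(Q)\le C_{\rm iso}\sum_{j\ne j_0}p_j<C_{\rm iso}$, so $\L^d(P_{j_0})=2l-\L^d(E)-\L^d(Q)\ge\tfrac32 l-C_{\rm iso}>l$, whence $j^*=j_0$; since $P_1,P_{j_0}$ are disjoint and $\L^d(P_1)\ge\L^d(P_{j_0})$ whenever $j_0\ge 2$, this forces $j_0=1$, so $p_1=p_{j_0}\ge 1$. \emph{(c)} If $p_{j_0}\ge 1$ and $\L^d(Q)>l$: then $\L^d(E)+\L^d(P_{j_0})=2l-\L^d(Q)<l$ realizes the minimum in the $Q$-estimate, hence $\L^d(E)+\L^d(P_{j_0})\le C_{\rm iso}\sum_{j\ne j_0}p_j<C_{\rm iso}$, in particular $\L^d(P_{j_0})<l$; running the argument of (a) inside $Q$ (if no $P_j$ with $j\ne j_0$ exceeded volume $l$, then $\L^d(Q)=\sum_{j\ne j_0}\L^d(P_j)\le C_{\rm iso}\sum_{j\ne j_0}p_j<C_{\rm iso}<l$, a contradiction) gives $j^*\ne j_0$, and $j_0=1$ is excluded since $\L^d(P_{j_0})<l\le\L^d(P_{j^*})$ while $\L^d(P_1)\ge\L^d(P_j)$ for $j\ge 2$; thus $j^*=1$ with $p_1<1$.

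It then only remains to conclude. In cases (a) and (c) we have $p_1<1$ and $\L^d(P_1)>l\ge\L^d(S\setminus P_1)$, so Proposition~\ref{nice_new_isoperimetric_inequality} applied to $P_1$ gives directly $\L^d(S\setminus P_1)\le C_{\rm iso}\,p_1\le C_{\rm iso}\H^{d-1}\big(\bigcup_j\partial^* P_j\cap S\big)$, which is even stronger than \eqref{eq: DDD2}. In case (b) instead $j_0=1$, $p_1\ge 1$, and $S\setminus P_1=E\cup Q$, so the $Q$-estimate in the branch $\L^d(Q)\le l$ yields $\L^d(S\setminus P_1)=\L^d(E)+\L^d(Q)\le\L^d(E)+C_{\rm iso}\sum_{j\ne j_0}p_j\le\L^d(E)+C_{\rm iso}\H^{d-1}\big(\bigcup_j\partial^* P_j\cap S\big)$: this is exactly where the additive term $\L^d(E)$ in \eqref{eq: DDD2} is unavoidable. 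Undoing the rescaling restores the factor $\sigma$ in front of the perimeter, and one fixes $T_0\in\N$ with $T_0>2C_{\rm iso}$, no smaller than the threshold of Corollary~\ref{cor: dom1}, recalling $C_{\rm iso}\ge 1$. The main obstacle is purely a matter of case-bookkeeping rather than of analysis: one has to track whether the possible heavy-perimeter component $P_{j_0}$ is itself the dominant one --- case (b) --- because precisely in that situation the relative isoperimetric inequality cannot be applied to $P_1$ directly, which is the reason the conclusion must carry the extra volume term $\L^d(E)$.
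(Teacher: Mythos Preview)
Your proof is correct and follows essentially the same strategy as the paper's: exploit the pairwise $\H^{d-1}$-disjointness of the $\partial^* P_j$ (via \eqref{loc str} and the structure theorem for Caccioppoli partitions) together with Proposition~\ref{nice_new_isoperimetric_inequality} applied to the small-perimeter pieces, and use the volume constraint $\L^d(E)\le\tfrac14\L^d(S)$ to rule out the degenerate scenarios. The only difference is organizational: the paper splits into the two cases $p_1<\sigma^{d-1}$ versus $\sum_{j\ge 2}p_j<\sigma^{d-1}$ and treats each $P_j$ individually, whereas you pivot on whether a single heavy-perimeter index $j_0$ exists and bundle the remaining components into the auxiliary set $Q=\bigcup_{j\ne j_0}P_j$ --- a neat repackaging that leads to the same computations.
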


\begin{proof}
 As $\L^d(P_j) \le \L^d(P_1)$, we first observe that 
\begin{align}\label{two times} 
\L^d(P_j) \le \frac{1}{2}\L^d(S^l_{ \sigma}(x_0)) \quad \text{for all $j \ge 2$}\,.
\end{align}
 The sets $(\partial^* P_j \cap S^l_{ \sigma\EEE}(x_0) )_{j \ge 1}$ are pairwise disjoint up to $\mathcal{H}^{d-1}$-negligible sets by \eqref{loc str} and the local structure of Caccioppoli partitions, see \cite[Theorem 4.17]{Ambrosio-Fusco-Pallara:2000}. Therefore, by \eqref{small_perimeter_condition-new-new} we get
\begin{equation}\label{splitting_of_perimeter}
\sum_{j \ge 1} \mathcal{H}^{d-1}\Big(\partial^* P_j\cap S^l_{ \sigma\EEE}(x_0)\Big) =   \mathcal{H}^{d-1}\Big(\bigcup_{j \ge 1}\partial^* P_j\cap S^l_{ \sigma\EEE}(x_0)\Big)< 2  \sigma\EEE^{d-1}\,. 
\end{equation}
This implies that at least one of the following two cases holds: 
 $${\rm (a)} \ \  \mathcal{H}^{d-1}\Big(\partial^* P_1\cap S^l_{ \sigma\EEE}(x_0)\Big) <  \sigma^{d-1}, \quad \quad {\rm (b)} \ \  \sum_{j \ge 2} \mathcal{H}^{d-1}\Big(\partial^* P_j\cap S^l_{ \sigma}(x_0)\Big) <   \sigma^{d-1} \,.  $$
 We first assume that (a) holds. An  application of  Proposition \EEE \ref{nice_new_isoperimetric_inequality} yields
\begin{align}\label{eq 1:hopefully_last_isop}
\min\{\L^d(P_1),\L^d(S^l_{ \sigma\EEE}(x_0)\setminus P_1)\}\leq C_{\rm iso} \sigma\EEE \H^{d-1}(\partial^* P_1\cap S^l_{ \sigma\EEE}(x_0)) \le  C_{\rm iso}  \sigma\EEE^d \,.
\end{align}
Then, in the case $\L^d(P_1)\geq \L^d(S^l_{ \sigma\EEE}(x_0)\setminus P_1)$, we find
$${\L^d(S^l_{ \sigma}(x_0) \setminus P_1)   \le C_{\rm{iso}}\sigma\H^{d-1}(\partial^* P_1\cap S^l_{ \sigma}(x_0)) \EEE \le  C_{\rm iso}  \sigma^d\,.}  $$
This shows the first part of  \eqref{eq: DDD2}. The second part follows by choosing $T_0 \in \N$ large enough depending on $C_{\rm iso}$ noting that $\L^d(S^l_{ \sigma\EEE}(x_0))=2l\sigma^{d-1} \EEE\ge 2T_0  \sigma\EEE^d $.  

We show that the case $\L^d(P_1)< \L^d(S^l_{ \sigma\EEE}(x_0)\setminus P_1)$ leads to a contradiction. Indeed, if that was the case,  by \eqref{eq 1:hopefully_last_isop} we would have 
\begin{equation}\label{eq 2:hopefully_last_isop}
\L^d(P_j) \le \L^d(P_1) \le C_{\rm iso}   \sigma\EEE^d \quad \text{for all $j \ge 2$}\,.
\end{equation}
From this we derive that
\begin{align}\label{eq: small compi}
\L^d(P_j) \leq  C_{\rm iso}  \sigma \H^{d-1}(\partial^* P_j\cap S^l_{ \sigma\EEE}(x_0)) \quad \text{for all $j \ge 1$} \,. 
\end{align}
Indeed, for $j=1$ this is a consequence of \eqref{eq 1:hopefully_last_isop}. For $j \ge 2$ instead, if $\H^{d-1}(\partial^* P_j\cap S^l_{ \sigma\EEE}(x_0)) \ge  \sigma\EEE^{d-1}$, this follows from \eqref{eq 2:hopefully_last_isop}. If  $\H^{d-1}(\partial^* P_j\cap S^l_{ \sigma}(x_0)) <  \sigma\EEE^{d-1}$, \eqref{small_perimeter_condition} holds and the estimate follows from an application of Proposition~\ref{nice_new_isoperimetric_inequality} and \eqref{two times}.

Now, by \eqref{splitting_of_perimeter} and  \eqref{eq: small compi} we obtain
$$\L^d(S^l_{ \sigma}(x_0)\setminus \overline{E}) = \sum_{j \ge 1} \L^d(P_j) \le C_{\rm iso}   \sigma \sum_{j \ge 1} \H^{d-1}(\partial^* P_j\cap S^l_{ \sigma}(x_0)) \le 2C_{ d}{ \sigma}^d\,. $$
 Using  that  $\L^d(S^l_{ \sigma}(x_0)) \ge 2T_0  \sigma^d $, by choosing $T_0\in \N$ large enough, this would imply  $$\L^d(S^l_{ \sigma\EEE}(x_0)\setminus \overline{E}) < \frac{3}{4}\L^d( S^l_{ \sigma\EEE}(x_0))\,.$$ This however contradicts the fact that  $\L^d(E) \le \frac{1}{4} \L^d( S^l_{ \sigma\EEE}(x_0)) $, see \eqref{small_perimeter_condition-new-new}.

We are left with case (b). Here, we can again apply  Proposition \EEE \ref{nice_new_isoperimetric_inequality} on each $P_j$, $j \ge 2$, to find
\begin{align*}
\L^d(P_j) =  \min\{\L^d(P_j),\L^d(S^l_{ \sigma\EEE}(x_0)\setminus P_j)\}\leq C_{\rm iso} \sigma\EEE \H^{d-1}(\partial^* P_{ j\EEE}\cap S^l_{ \sigma\EEE}(x_0))\,,  
\end{align*} 
 where  the first identity follows from \eqref{two times}.  Now, by using \eqref{splitting_of_perimeter} we estimate
\begin{align*}
\L^d( S^l_{ \sigma}(x_0) \setminus P_1  ) &\le  \mathcal{L}^d(E) + \sum_{j \ge 2} \L^d(P_j) \le  \mathcal{L}^d(E) +  \sum_{j \ge 2} C_{\rm iso} \sigma\EEE\H^{d-1}\Big( \partial^* P_j   \cap S^l_{ \sigma\EEE}(x_0)\Big) \\
&  \le  C_{\rm iso} \sigma\H^{d-1}\Big( \bigcup_{j \ge 1}\partial^* P_j   \cap S^l_{ \sigma}(x_0)\Big) + \mathcal{L}^d(E)\,.
\end{align*}
This shows the first part of \eqref{eq: DDD2}. The second part again follows for some $T_0 \in \N$ large enough, using that $\mathcal{L}^d(E) \le \frac{1}{4} \L^d( S^l_{ \sigma}(x_0))$.
\end{proof}

\subsection{Local estimates and Sobolev extension on cuboids}\label{sec: locest}

In the following, we set up the necessary notation and definitions for the remainder of Section  \ref{preparatory_modifications}. We introduce the thickened void set and partition our reference domain $\Omega_{h, \rho}$ into cuboids, where we partition with respect to the surface area of the boundary of the thickened void.
We let $(v_h,E_h)_{h>0}$ be a sequence of admissible deformations and void sets in the  thin rod $\Omega_h$, where for convenience we use a continuum index $h$ in the notation for the sequences. Recalling \eqref{eq: G energy}, we suppose that 
\begin{equation}\label{uniform_energy_bound}
\sup_{h>0}  \mathcal{G}^{h}(v_h,E_h)< +\infty\,.
\end{equation}
We fix $0<\rho \le \rho_0:= 1 -(19/20)^{1/3}$ as in Proposition \ref{Lipschitz_replacement}. Recall the sequence $(\kappa_h)_{h>0}$ as in \eqref{rate_1_gamma_h}. For technical reasons, we need to assume that $(\kappa_h)_{h>0}$ converges to zero sufficiently fast. Therefore, we introduce
\begin{align}\label{barkappa}
\bar{\kappa}_h := \min \lbrace \kappa_h, h^2 \rbrace \,,
\end{align}
and observe that
\begin{align}\label{barkappa2}
 \mathcal{G}^{\bar{\kappa}_h}_{\rm surf}(E_h;\Omega_h)   \le  \mathcal{G}^{\kappa_h}_{\rm surf}(E_h;\Omega_h)\,.
\end{align}
Recall $T$ as introduced  before \eqref{overlapping_rectangles}.  From now on, we will tacitly assume that $T$ is chosen sufficiently large  such that Corollaries \ref{cor: dom1}--\ref{cor: dom2} are applicable. After possibly increasing $T$, we can assume that
\begin{align}\label{eq: T1}
T \ge 80 C_{\rm iso}\,,
\end{align}
where $C_{\rm iso} \ge 1$     is the constant in Proposition~\ref{nice_new_isoperimetric_inequality}. Let  $\eta_0 = \eta_{0}(\rho) \in (0,1)$ be the constant in Proposition \ref{prop:setmodification}. In view of \eqref{rate_1_gamma_h} and \eqref{barkappa}, we can  choose a sequence $(\eta_h)_{h >0} \subset (0,\eta_0)$ converging to zero sufficiently slow such that the constant $C_{\eta_h}$ in \eqref{eq: main rigitity}, applying   Theorem \ref{prop:rigidity} for $\rho$, $l=3Th$,  $\eta = \eta_h$, and $\gamma = \bar{\kappa}_h/h^2$, satisfies  
\begin{equation}\label{parameters_for_uniform_bounds}
\limsup_{h\to 0 } \,   C_{\eta_h}\Big(\frac{h^2}{\bar{\kappa}_h}\Big)^{\AAA 5\EEE} h^{2/5} <+\infty  \,.
\end{equation}
Then, by Proposition \ref{prop:setmodification} applied for $\rho$, $\eta = \eta_h$, and $\gamma=\bar{\kappa}_h/h^2$, for all $h>0$ we can find open sets $E^*_h$ with $E_{h} \subset E^*_h \subset \Omega_h$ such that  $\partial E^*_h\cap \Omega_{h}$ is a union of finitely many \NNN$C^2$\EEE-regular submanifolds and 
\begin{align}\label{eq: void-new}
\begin{split}
{\rm{(i)}} & \ \   h^{-3} \mathcal{L}^3(E^*_h\EEE\setminus E_{h}) \to 0,  \quad \quad  h^{-1}\dist_\mathcal{H}(E^*_h,E_h) \to 0 \quad \text{ as $h \to 0$}\,,
\\[2pt]
{\rm{(ii)}} & \ \    \liminf_{h \to 0 }    h^{-2}  \H^2(\partial E^*_h\cap \Omega_h)\leq    \liminf_{h \to 0 } \, h^{-2}\mathcal{G}^{\bar{\kappa}_h}_{\rm surf}(E_h;\Omega_h) \le \liminf_{h \to 0 } \, h^{-2}\mathcal{G}^{\kappa_h}_{\rm surf}(E_h;\Omega_h) \,.
\end{split}
\end{align}
Here, we used \eqref{eq: partition-new}$\rm{(i),(ii)}$, $ \eta_h \to 0$, \eqref{barkappa}, and that $h^{-2} \mathcal{G}_{\rm surf}^{\gamma h^2}(E_h;\Omega_h) = h^{-2}\mathcal{G}_{\rm surf}^{\overline{\kappa}_h}(E_h;\Omega_h) $ is uniformly bounded by  \eqref{eq: G energy},  \eqref{uniform_energy_bound}, and  \eqref{barkappa2}.  This is the sequence of sets in Proposition \ref{prop: 2nd main}  and we note that \eqref{eq: void-new} implies \eqref{eq: void-newXXX}.  In the rigidity estimate \eqref{eq: main rigitity}, the behavior of the deformation inside $E_h^*$  cannot be controlled. Thus, in a similar fashion to \eqref{initial_admissible_configurations}, for definiteness we can assume that the deformation is the identity inside $E_h^*$, i.e., we introduce the modification  $v_h^*\colon \Omega_h \to \R^3$ by
\begin{align}\label{eq: *mod}
v_h^*(x) := \begin{cases}  v_h(x) &  \text{ if } x \in \Omega_h \setminus E_h^*, \\
{\rm id} & \text{ if } x \in E_h^*\,.
\end{cases}
\end{align}
Note that by \eqref{eq: void-new} we get 
\begin{align}\label{eq: *mod2}
h^{-3} \mathcal{L}^3( \lbrace  v_h \neq {v}^*_h \rbrace ) \le h^{-3}  \mathcal{L}^3(E^*_h\setminus E_{h}) \to 0 \quad \text{ as $h \to 0$}\,.
\end{align}

Recall the definition of the $T$-cuboids in the family  $\mathcal{Q}_{h}$ in \eqref{overlapping_rectangles}. For $i=2,\ldots,N-1$, we also introduce the $3T$-cuboids by
\begin{equation*}
%\label{three_times_the_cuboid}
Q^3_{h}(i):=Q_{h}(i-1)\cup Q_{h}(i)\cup Q_{h}(i+1)\,.
\end{equation*}
Our idea is to apply Theorem \ref{prop:rigidity} for $U := Q^3_{h}(i)$. To this end, we also need the slightly smaller cuboids, defined by 
\begin{equation}\label{three_times_the_cuboid-rho}
Q^3_{h, \rho}(i):=  x_i + (1-\rho) \big(Q^3_h(i)  - x_i\big)  \subset \Omega_{h,\rho}\,,
\end{equation}
where $x_i = ((i-1/2)Th,0,0)$ denotes the center of the cuboid $Q_h(i)$. As we suppose that $ 0<\rho \le 1 -( 19/ 20)^{1/3}$, \eqref{three_times_the_cuboid-rho} implies that  
\begin{align}\label{eq: rho small}
\mathcal{L}^3(Q^3_{h, \rho}(i)) \ge \frac{ 19}{20} \mathcal{L}^3(Q^3_{h}(i))\,.
\end{align}
We also introduce the (small) parameter
\begin{align}\label{eq: T2}
\alpha = \Big(\frac{T}{10 c_T} \Big)^{2/3}\,,
\end{align}
where $ c_T:=c(T)>0$ denotes the constant of Theorem \ref{th: kornSBDsmall} applied on the cuboid $(0,3T) \times (-\frac{1}{2}, \frac{1}{2})^2$.      We will distinguish three classes of cuboids: first,  we consider    the family  of indices associated to \textit{good cuboids}, defined by
\begin{equation}\label{good_cuboids}
 I^h_{\rm g} :=\Big\{i=2,\dots, N-1\colon\  
\H^2(\partial E^*_h\cap  Q^3_{h,\rho}(i))\leq \alpha h^2
\Big\}\,.
\end{equation} 
This will be the family of cuboids for which Theorem \ref{th: kornSBDsmall} can be applied without introducing a too large exceptional set, cf.\  \eqref{eq: R2main}. Next, we   collect the family of \emph{bad cuboids} in the index set
\begin{align}\label{very_good_mildly_good_cuboids}
 I^h_{\rm b}  : & =\Big\{i\in \{2,\dots,N-1\}\setminus I^h_{\rm g}\colon \H^2\big(\partial E^*_h\cap  Q^3_{h,\rho}(i) \big) <(1-\rho)^2h^2 \Big\} \notag \\ 
& \  \cup \Big\{i\in \{2,\dots,N-1\}\setminus I^h_{\rm g}\colon   \H^2\big(\partial E^*_h\cap  Q^3_{h,\rho}(i)) <   2 (1-\rho)^2h^2\,,\ \mathcal{L}^3(E_h^* \cap Q^3_{h,\rho}(i)) \le    2C_{\rm iso}    h^3  \Big\}\,.
\end{align} 
For the  cuboids in $I^h_{\rm b}$, it might not be possible to apply Theorem \ref{th: kornSBDsmall}, but due to the relative isoperimetric inequality, see Corollaries \ref{cor: dom1}--\ref{cor: dom2}, we can still find a dominant component   which will allow us to compare rigid motions on adjacent cuboids via Lemma \ref{lemma: rigid motions}.  

On the remaining set of indices 
\begin{align}\label{eq: ugly}
 I^h_{\rm u} := \lbrace i = 1,\ldots, N \colon i \notin I^h_{\rm g} \cup I^h_{\rm b}\rbrace\,,
\end{align}
 corresponding to the so called \emph{ugly cuboids}, where the thickened void may cut through the rod and thus the behavior of $v_h^*$ cannot be controlled.  
 
\NNN Note that for each $i = 2 ,\ldots, N-1$, we have 
\begin{align}\label{ 5number}
\#\big\{j \in \{2 ,\ldots, N-1\}\colon Q^3_{h,\rho}(i) \cap Q^3_{h,\rho}(j)\neq \emptyset\big\}\leq 5\,.
\end{align}
 By \eqref{good_cuboids}\NNN--%, \eqref{very_good_mildly_good_cuboids}, 
 \eqref{ 5number}, \eqref{eq: void-new}(ii), and \eqref{uniform_energy_bound}, for $h>0$ small enough, we obtain
\begin{align*}
\alpha  \#\big( I^h_{\rm b} \cup  I^h_{\rm u} \big) \leq h^{-2}\sum_{i \in I^h_{\rm b} \cup  I^h_{\rm u} } \H^2(\partial E^*_h\cap  Q^3_{h,\rho}(i)) \leq Ch^{-2} \mathcal{H}^2(\partial E^*_h\cap \Omega_h) \leq Ch^{-2}\mathcal{G}^{\kappa_h}_{\rm surf}(E_h;\Omega_h)\leq C\,.
\end{align*}
Thus, we deduce that
\begin{equation}\label{cardinality_of_bad_cuboids}
\# \big( I^h_{\rm b} \cup  I^h_{\rm u} \big) = \# \big( \lbrace 1,\ldots, N\rbrace \setminus I^h_{\rm g}  \big)   \leq C
\end{equation} \EEE
for $C=C(\alpha) >0$, i.e., there are only a bounded number of indices in $I^h_{\rm b}\cup I^h_{\rm u}$ independently of $h$. 

We now formulate a local rigidity estimate on cuboids. As a final preparation, we introduce the localized elastic energy by
\begin{align}\label{eq: epsinot}
\eps_{i,h} := \int_{Q^3_{h}(i) \setminus \overline{E_h}} {\rm dist}^2(\nabla v_h, SO(3)) \, {\rm d}x\,, 
\end{align}
and use  \eqref{eq: nonlinear energy}(iv), \eqref{eq: G energy},  \eqref{uniform_energy_bound}, and \eqref{ 5number} to find 
\begin{equation}\label{localized_elastic_energy_on_good_cuboids}
\sum_{i=2}^{N-1}\eps_{i,h} \leq C \int_{\Omega_h\setminus \overline{E_h}}\mathrm{dist}^2(\nabla v_h,SO(3))\,\mathrm{d}x\leq  Ch^2 \epsilon_h \,.
\end{equation}

\begin{proposition}[Local rigidity estimate and Sobolev approximation]\label{summary_estimates_proposition}
Let $0 < \rho \le \rho_0$. There exists a constant $C = C(T)>0$ independent of $h$ such that for all $h>0$ and for every $i\in  I^h_{\rm g} \cup I^h_{\rm b}$ there exists a set of finite perimeter $D^3_{i,h}\subset  Q^3_{h,\rho}(i)  $ satisfying  
\begin{equation}\label{big_volume_of_dominant_set}
%\H^2(\partial^* D^3_{i,h}\cap Q^3_{h}(i))\leq c\H^{2}\big(\partial E_h^*\cap Q^3_{h}(i))\,,\ \  
 \L^3\big(Q^3_{h,\rho}(i)\setminus D^3_{i,h} \big)  \leq Ch\H^{2}\big(\partial E_h^*\cap Q^3_{h}(i)\big), \quad  \L^3(   Q^3_{h}(i)  \setminus D^3_{i,h})\leq \frac{1}{5}\L^3(Q^3_{h }(i))\,,
\end{equation}	 
and a corresponding rigid motion $r_{i,h}(x):=R_{i,h}x+ b_{i,h}$, where $R_{i,h}\in SO(3)$ and $b_{i,h}\in \R^3$ with $|b_{i,h}| \le CM$ (see \eqref{initial_admissible_configurations} for the definition of $M$)    such that 
\begin{equation}\label{L2_gradient_estimates}
h^{-2}\int_{D^3_{i,h}}\big|v^*_h(x)-r_{i,h}(x)\big|^2\,\mathrm{d}x+\int_{D^3_{i,h}}\big|\nabla v_h^*(x)- R_{i,h}\big|^2\, \mathrm{d}x\leq
 {C}\eps_{i,h}^{9/10}\,,
\end{equation}
where $v_h^*$ is defined in \eqref{eq: *mod}.

Moreover, for $i \in I^h_{\rm g}$ there exists a Sobolev map   $z_{i,h}\in W^{1,2}(Q^3_{h,\rho}(i);\R^3)$  such that 
\begin{align}\label{properties_of_Sobolev_replacement}
\begin{split}
\rm{(i)}&\quad z_{i,h}\equiv v^*_h \quad \text{on }\ D^3_{i,h}\,,\\
\rm{(ii)}&\quad h^{-2}\int_{Q^3_{h,\rho}(i)}\big|z_{i,h}(x)-r_{i,h}(x)\big|^2\,\mathrm{d}x+\int_{Q^3_{h,\rho}(i)}\big|\nabla z_{i,h}(x)- R_{i,h}\big|^2\, \mathrm{d}x\leq
{C}\eps_{i,h}\,, \\
\rm{(iii)} & \quad \Vert z_{i,h} \Vert_{L^\infty(Q^3_{h,\rho}(i))} \le  CM\,.
\end{split}
\end{align}
\end{proposition}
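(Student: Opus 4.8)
The plan is, on each cuboid $Q^3_h(i)$ with $i\in I^h_{\rm g}\cup I^h_{\rm b}$, to apply the geometric rigidity estimate of Theorem~\ref{prop:rigidity} (with $U=Q^3_h(i)$, $\tilde U=Q^3_{h,\rho}(i)$, $\gamma=\bar\kappa_h/h^2$, $\eta=\eta_h$), then to isolate a large ``rigid'' cell by the isoperimetric inequalities of Corollaries~\ref{cor: dom1}--\ref{cor: dom2}, and — for good cuboids — to upgrade it to a Sobolev map via the Korn inequality for small jump sets (Theorem~\ref{th: kornSBDsmall}). Since $\varepsilon_{i,h}\le\sum_j\varepsilon_{j,h}\le Ch^2\epsilon_h\le Ch^4$ by \eqref{localized_elastic_energy_on_good_cuboids} (recalling $\epsilon_h\lesssim h^2$), conditions \eqref{rate_1_gamma_h}, \eqref{barkappa}, \eqref{parameters_for_uniform_bounds} give $C_{\eta_h}(h^2/\bar\kappa_h)^{15/2}h^{-3}\varepsilon_{i,h}\le1$ and $C_{\eta_h}(h^2/\bar\kappa_h)^{5}\le\varepsilon_{i,h}^{-1/10}$ for small $h$; hence \eqref{eq: main rigitity} yields rotations $(R_j)_j\subset SO(3)$ and vectors $(b_j)_j$ on the connected components $(\tilde U_j)_j$ of $Q^3_{h,\rho}(i)\setminus\overline{E^*_h}$ with the sharp control $\sum_j\int_{\tilde U_j}|{\rm sym}((R_j)^T\nabla v_h-{\rm Id})|^2\le C\varepsilon_{i,h}$ and the suboptimal controls $\sum_j\int_{\tilde U_j}|(R_j)^T\nabla v_h-{\rm Id}|^2+\sum_j h^{-2}\int_{\tilde U_j}|v_h-(R_jx+b_j)|^2\le C\varepsilon_{i,h}^{9/10}$.

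Next, $\{\tilde U_j\}_j\cup\{E^*_h\cap Q^3_{h,\rho}(i)\}$ is a Caccioppoli partition of $Q^3_{h,\rho}(i)$ whose total relative perimeter equals $\mathcal{H}^2(\partial E^*_h\cap Q^3_{h,\rho}(i))$, so by the thresholds in \eqref{good_cuboids}, \eqref{very_good_mildly_good_cuboids} Corollary~\ref{cor: dom1} applies (for $i\in I^h_{\rm g}$ and the first family of \eqref{very_good_mildly_good_cuboids}), resp.\ Corollary~\ref{cor: dom2} (for the second), giving a dominant cell $P_1$ with $\mathcal{L}^3(Q^3_{h,\rho}(i)\setminus P_1)\le C_{\rm iso}\sigma\,\mathcal{H}^2(\partial E^*_h\cap Q^3_{h,\rho}(i))+\mathcal{L}^3(E^*_h\cap Q^3_{h,\rho}(i))$ and $\mathcal{L}^3(P_1)>\tfrac12\mathcal{L}^3(Q^3_{h,\rho}(i))$, where $\sigma=(1-\rho)h$. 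If $P_1$ is the void cell $E^*_h\cap Q^3_{h,\rho}(i)$ (possible when the thickened void fills almost the whole cuboid), I set $D^3_{i,h}:=P_1$, $R_{i,h}:={\rm Id}$, $b_{i,h}:=0$, and $z_{i,h}:={\rm id}$ when $i\in I^h_{\rm g}$; every claim is then immediate since $v^*_h\equiv{\rm id}$ on $E^*_h$ (for the volume loss, use \eqref{eq: T1}, \eqref{eq: rho small}). Otherwise $P_1=\tilde U_{j_1}$ for some $j_1$, and for $i\in I^h_{\rm b}$ I choose $D^3_{i,h}:=P_1$, $R_{i,h}:=R_{j_1}$, $b_{i,h}:=b_{j_1}$: the $j_1$-terms of the sums above yield \eqref{L2_gradient_estimates}, $|b_{j_1}|\le CM$ follows from $\|v_h\|_{L^\infty}\le M$ and Lemma~\ref{lemma: rigid motions}, and \eqref{big_volume_of_dominant_set} follows from the isoperimetric estimate together with \eqref{eq: T1}, \eqref{eq: rho small} and the observation that $i\notin I^h_{\rm g}$ forces $\mathcal{H}^2(\partial E^*_h\cap Q^3_{h,\rho}(i))>\alpha h^2$, which lets one absorb the $\mathcal{L}^3(E^*_h\cap\cdot)\le2C_{\rm iso}h^3$ term of Corollary~\ref{cor: dom2} into $Ch\,\mathcal{H}^2(\partial E^*_h\cap Q^3_h(i))$.

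For $i\in I^h_{\rm g}$ with $P_1=\tilde U_{j_1}$ genuine, the Sobolev replacement is the core step. I glue $\tilde v_h:=v_h$ on $P_1$ and $\tilde v_h:=R_{j_1}x+b_{j_1}$ on $Q^3_{h,\rho}(i)\setminus P_1$; then $\tilde v_h\in SBV^2(Q^3_{h,\rho}(i);\R^3)$ with $\mathcal{H}^2(J_{\tilde v_h})\le\mathcal{H}^2(\partial E^*_h\cap Q^3_{h,\rho}(i))\le\alpha h^2$ and $\|{\rm sym}((R_{j_1})^T\nabla\tilde v_h-{\rm Id})\|^2_{L^2(Q^3_{h,\rho}(i))}\le C\varepsilon_{i,h}$ (the gluing region has vanishing symmetrised gradient). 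Applying Theorem~\ref{th: kornSBDsmall} to $u:=(R_{j_1})^T\tilde v_h-{\rm id}$ (with the scale-invariant constant $c_T$) produces an exceptional set $\omega$ with $\mathcal{L}^3(\omega)\le c_T(\alpha h^2)^{3/2}=\tfrac{T}{10}h^3$ — exactly why $\alpha$ is chosen as in \eqref{eq: T2} — an infinitesimal rigid motion $a(x)=Sx+c$ with $({\rm diam})^{-1}\|u-a\|_{L^2(\cdot\setminus\omega)}+\|\nabla u-S\|_{L^2(\cdot\setminus\omega)}\le C\sqrt{\varepsilon_{i,h}}$, and $v\in W^{1,2}(Q^3_{h,\rho}(i);\R^3)$ with $v\equiv u$ off $\omega$, $\|{\rm sym}(\nabla v)\|_{L^2}\le C\sqrt{\varepsilon_{i,h}}$, $\|v\|_{L^\infty}\le\|u\|_{L^\infty}\le CM$. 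I set $D^3_{i,h}:=P_1\setminus\omega$ and $z_{i,h}:=R_{j_1}(v+{\rm id})$, so $z_{i,h}\equiv v_h\equiv v^*_h$ on $D^3_{i,h}$ and $\|z_{i,h}\|_{L^\infty}\le CM$; the bounds \eqref{big_volume_of_dominant_set} follow by adding $\mathcal{L}^3(\omega)$ to the isoperimetric loss, using $\mathcal{L}^3(\omega)\le c_T(\mathcal{H}^2(\partial E^*_h\cap Q^3_{h,\rho}(i)))^{1/2}\mathcal{H}^2(\partial E^*_h\cap Q^3_{h,\rho}(i))\le c_T\sqrt\alpha\,h\,\mathcal{H}^2(\partial E^*_h\cap Q^3_h(i))$ for the first bound and $\mathcal{L}^3(\omega)=\tfrac1{30}\mathcal{L}^3(Q^3_h(i))$ together with \eqref{eq: rho small} and $\mathcal{L}^3(Q^3_{h,\rho}(i)\setminus P_1)\le C_{\rm iso}(1-\rho)\alpha h^3$ for the second.

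The quantitative estimates \eqref{L2_gradient_estimates} and \eqref{properties_of_Sobolev_replacement}(ii) hinge on controlling the residual skew matrix $S$, which I expect to be the main obstacle: Theorem~\ref{th: kornSBDsmall} only provides $\nabla u\approx S$, and $S$ is \emph{a priori} not small. But on $P_1\setminus\omega$ one has $\nabla v_h=R_{j_1}(\nabla v+{\rm Id})$ with $\nabla v\approx S$, while $\int_{P_1\setminus\omega}{\rm dist}^2(\nabla v_h,SO(3))\le\varepsilon_{i,h}$; hence ${\rm dist}^2(S+{\rm Id},SO(3))\,\mathcal{L}^3(P_1\setminus\omega)\le C\varepsilon_{i,h}$, and since $\mathcal{L}^3(P_1\setminus\omega)\gtrsim Th^3$ and ${\rm dist}(S+{\rm Id},SO(3))\sim\tfrac12|S|^2$ for $S$ small (which it is, $\varepsilon_{i,h}\le Ch^4$), this forces ${\rm dist}(S+{\rm Id},SO(3))\le Ch^{-3/2}\varepsilon_{i,h}^{1/2}$. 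I then take $R_{i,h}:=\hat R$, the orthogonal polar factor of $R_{j_1}(S+{\rm Id})\in SO(3)$, and $r_{i,h}$ the nearest-rotation modification of $x\mapsto R_{j_1}(a(x)+x)$ \emph{re-centred at the cuboid centre $x_i$} — the re-centring is crucial, so that $|\hat R-R_{j_1}(S+{\rm Id})|={\rm dist}(S+{\rm Id},SO(3))$ is amplified only by $|x-x_i|\le 2Th$, not by the full rod length. Combining this with the off-$\omega$ Korn bounds, a standard Korn inequality on the nice cuboid $Q^3_{h,\rho}(i)$, and Lemma~\ref{lemma: rigid motions} (to reconcile the two rigid motions on the fat set $P_1\setminus\omega$), one obtains $\int_{Q^3_{h,\rho}(i)}|\nabla z_{i,h}-\hat R|^2\le C\varepsilon_{i,h}$ and $h^{-2}\int_{Q^3_{h,\rho}(i)}|z_{i,h}-r_{i,h}|^2\le C\varepsilon_{i,h}$, i.e.\ \eqref{properties_of_Sobolev_replacement}(ii); restricting to $D^3_{i,h}=P_1\setminus\omega$ and using $\varepsilon_{i,h}\le\varepsilon_{i,h}^{9/10}$ gives \eqref{L2_gradient_estimates}. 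The bounded number of cuboids near $i=2$, $i=N-1$ are dealt with by intersecting with $\Omega_h$, which only perturbs aspect ratios within bounded ranges.
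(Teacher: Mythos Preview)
Your proof is correct and follows essentially the same route as the paper: apply Theorem~\ref{prop:rigidity} on $Q^3_h(i)$ with the parameters tuned via \eqref{parameters_for_uniform_bounds}, use Corollaries~\ref{cor: dom1}--\ref{cor: dom2} to isolate a dominant component, split into the three cases (void dominant / bad cuboid / good cuboid), and for good cuboids feed the rotated displacement into Theorem~\ref{th: kornSBDsmall} with the choice \eqref{eq: T2} of $\alpha$ controlling $\mathcal{L}^3(\omega)$.

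The one place where you deviate is the rotation-correction step in case~(c). The paper bounds the Korn skew matrix $A_{i,h}$ by first controlling $|A_{i,h}|^2\le Ch^{-3}\varepsilon_{i,h}^{9/10}$ through the \emph{suboptimal} gradient estimate \eqref{eq: main rigitity}(ii), and then invoking the Taylor expansion $\mathrm{dist}(\mathrm{Id}+A,SO(3))=O(|A|^2)$ to obtain $\mathrm{dist}^2(\mathrm{Id}+A_{i,h},SO(3))\le Ch^{-3}\varepsilon_{i,h}$. Your shortcut---integrating the pointwise triangle inequality $\mathrm{dist}(\mathrm{Id}+S,SO(3))\le|\nabla u-S|+\mathrm{dist}(\nabla v_h,SO(3))$ over $P_1\setminus\omega$---reaches the same bound directly from the sharp Korn estimate and the original elastic energy, bypassing both the suboptimal rigidity bound and the Taylor expansion in this step (note also that $\det(\mathrm{Id}+S)>0$ for every skew $S$, so the polar factor lies in $SO(3)$ without any smallness assumption). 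This is a genuine simplification of that sub-step, though the paper's route has the pedagogical virtue of making explicit why the $9/10$ exponent in \eqref{L2_gradient_estimates} is natural. Your closing remark about boundary cuboids is unnecessary: $i=1,N$ lie in $I^h_{\rm u}$ by construction, so they never enter the statement.
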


In the following, we refer to $D^3_{i,h}$ as the \emph{dominant component} since $\L^3(Q^3_{h}(i)\setminus D^3_{i,h})$ is small, see \eqref{big_volume_of_dominant_set}. Accordingly, $r_{i,h}$ denotes the \emph{dominant rigid motion} which approximates $v_h^*$ in $Q^3_{h, \rho \EEE}(i)$. Note that $D^3_{i,h} \subset E_h^*$ is also possible which means that the void has a large volume inside $Q^3_{h,\rho}(i)$.

Observe that the estimate \eqref{L2_gradient_estimates} is actually better for $i \in I^h_{\rm g}$ as $\eps_{i,h}^{9/10}$ can be replaced by $\eps_{i,h}$. This follows directly from \eqref{properties_of_Sobolev_replacement}.  
This improvement is  possible due to the application of a Korn-Poincar\'e inequality in case of void sets with small surface measure, see Theorem \ref{th: kornSBDsmall}.  We also note that the choice of the exponent  $9/10$ is for definiteness only and can be enhanced to any exponent smaller than $1$, provided the sequence $(\kappa_h)_{h>0}$ in \eqref{rate_1_gamma_h}  is chosen appropriately. Before starting with the proof, let us recall that we use the notation $C>0$ for generic constants which are independent of $h,  \rho$ but may depend on the fixed parameters  $T, L  $.

 \begin{proof}[Proof of Proposition \ref{summary_estimates_proposition}] 
We use Theorem \ref{prop:rigidity} for $\rho>0$, $l =3Th$, $\gamma :=\bar{\kappa}_h/h^2$ with $\bar{\kappa}_h$ from \eqref{barkappa}, and the sequence $\eta_h\to 0$ such that \eqref{parameters_for_uniform_bounds} holds.  We apply the rigidity result to $v^*_h$ in the cuboid  $U := Q^3_{h}(i)$ for $i\in I^h_{\rm g} \cup I^h_{\rm b}$ and the compactly contained cuboid $\tilde{U} := Q^3_{h,\rho}(i)$. We denote by
\begin{equation*}
\mathcal P_{i,h}:=  \Big\{(P^j_{i,h})_j  \text{ the connected components of } Q^3_{h,\rho}(i)\setminus   \overline{E^*_h} \Big\} \cup \lbrace E_h^* \rbrace\,,
\end{equation*}
where   the enumeration is such that $\L^3(P^1_{i,h})$ is always maximal.  

Recall the definitions in \eqref{good_cuboids}--\eqref{very_good_mildly_good_cuboids}. In the case $i \in I^h_{\rm g}$ or  in the case that $i \in I^h_{\rm b}$ with $\H^2\big(\partial E^*_h\cap  Q^3_{h,\rho}(i) \big) < (1-\rho)^2h^2$ we can apply Corollary \ref{cor: dom1} on $Q^3_{h,\rho}(i)$ to obtain a dominant component. If $i \in I^h_{\rm b}$ with $\H^2\big(\partial E^*_h\cap  Q^3_{h,\rho}(i) \big) \ge(1-\rho)^2h^2$ instead, we can apply Corollary \ref{cor: dom2} on $Q^3_{h,\rho}(i)$, where we note that  the volume condition in \eqref{small_perimeter_condition-new-new}  is indeed satisfied by the definition of $I^h_{\rm b}$,  \eqref{eq: rho small},  and the fact that $T \ge 80 C_{\rm iso}$, see \eqref{eq: T1}. 

In both cases,  using that  $\bigcup_{j \ge 1} \partial  P^j_{i,h}  \cap Q^3_{h,\rho}(i) = \partial E^*_h\cap  Q^3_{h,\rho}(i)$, we get a dominant component $P^1_{i,h} \subset Q^3_{h,\rho}(i)$ which by \eqref{eq: DDD1} or \eqref{eq: DDD2}, respectively, and \eqref{very_good_mildly_good_cuboids} satisfy 
\begin{align}\label{eq: QQQ}
\L^3(Q^3_{h,\rho}(i)\setminus P^1_{i,h}) \le C_{\rm iso}h\H^{2}(\partial E_h^*\cap Q^3_{h}(i)) \le C_{\rm iso}h^3   
\end{align}
or 
\begin{align}\label{eq: QQQ2}
\L^3(Q^3_{h,\rho}(i)\setminus P^1_{i,h}) \le C_{\rm iso}h\H^{2}(\partial E_h^*\cap Q^3_{h}(i))  + \L^3(E_h^* \cap Q^3_{h,\rho}(i))  \le 2C_{\rm iso}h^3  + 2C_{\rm iso} h^3 = 4C_{\rm iso} h^3\,.  
\end{align}
Therefore, in both cases, we get by \eqref{eq: T1} and \eqref{eq: rho small} that 
\begin{align}\label{eq: first on domi} 
\L^3(Q^3_{h}(i)\setminus P^1_{i,h})  \le 4C_{\rm iso}h^3  + \L^3(Q^3_{h}(i) \setminus Q^3_{h,\rho}(i))\le \frac{1}{20} Th^3 + \frac{1}{20}  \mathcal{L}^3(Q^3_{h}(i)) \le  \frac{1}{10}  \mathcal{L}^3(Q^3_{h}(i))\,,
\end{align}
and moreover
\begin{align}\label{eq: first on domiXXX}
 \L^3\big(Q^3_{h,\rho}(i)\setminus P^1_{i,h} \big) \le Ch\H^{2}(\partial E_h^*\cap Q^3_{h}(i))\,.
\end{align}
Indeed, in the first case this directly follows from \eqref{eq: QQQ}. In the second case, it follows from \eqref{eq: QQQ2} and the fact $\H^2\big(\partial E^*_h\cap  Q^3_{h,\rho}(i) \big) \ge (1-\rho)^2h^2 \ge \tfrac{1}{4}h^2$ (as $ 0<\rho\le \tfrac{1}{2}$), where  the absolute constant $C>0$   needs to be chosen sufficiently large. 
 
We now distinguish the cases
$${\rm (a)} \ \ P^1_{i,h} = E_h^*, \quad {\rm (b)} \ \ P^1_{i,h} \cap  E_h^* = \emptyset, \  i \in I^h_{\rm b}, \quad  {\rm (c)} \ \ P^1_{i,h} \cap  E_h^* = \emptyset, \ i \in I^h_{\rm g}\,. $$

\emph{Case ${\rm(a):}$} If $P^1_{i,h} = E_h^*$, we define  ${D}^3_{i,h} := P^1_{i,h}$,  $R_{i,h}:= {\rm Id}$, $b_{i,h}:= 0$, and $z_{i,h}\in W^{1,2}(Q^3_{h,\rho}(i);\R^3)$ by $z_{i,h}:= {\rm id}$. Then, \eqref{big_volume_of_dominant_set} holds by \eqref{eq: first on domi}--\eqref{eq: first on domiXXX} and \eqref{L2_gradient_estimates}--\eqref{properties_of_Sobolev_replacement} are trivially satisfied (recall \eqref{eq: *mod}). Note that in this case we can define a Sobolev modification $z_{i,h}$ also if $i \in I^h_{\rm b}$. 

\emph{Preparations for ${\rm(b)}$ and ${\rm(c):}$} We proceed with preparations for (b) and (c). Suppose that $P^1_{i,h}\cap  E_h^* = \emptyset$. Then,   \eqref{eq: main rigitity}  in Theorem \ref{prop:rigidity} provides a rotation $R^1_{i,h} \in SO(3)$ and $b_{i,h}^1 \in \R^3$ such that 
\begin{align*}
\hspace{-1em}\begin{split}
{{\rm (i)}} & \ \  \int_{P^1_{i,h}}\big|{\rm sym}\big((R^1_{i,h})^T \nabla v^*_h-\mathrm{Id}\big)\big|^2\,\mathrm{d}x 
\leq C\big(1 +  C_{\eta_h} (h^{-2} \bar{\kappa}_h)^{-15/2}h^{-3}\eps_{i,h}\big)\eps_{i,h}\,,
\\
{{\rm (ii)}} & \ \     h^{-2}\int_{P^1_{i,h}}|v^*_h-(R^1_{i,h}x+b^1_{i,h})|^2\,\mathrm{d}x
+\int_{P^1_{i,h}}\big|(R^1_{i,h})^T \nabla  v^*_h-\mathrm{Id}\big|^2\,\mathrm{d}x
 \leq  C_{\eta_h}
 (h^{-2}\bar{\kappa}_h)^{\AAA-5\EEE}\eps_{i,h}\,,
\end{split}
\end{align*}
where \EEE we use the notation in \eqref{eq: epsinot} and  recall that we set $\gamma = \bar{\kappa}_h/h^2\AAA\in (0,1]\EEE$. By the choice of $(\eta_h)_{h >0}$ before \eqref{parameters_for_uniform_bounds}, $\limsup_{h \to 0} \epsilon_h h^{-2} < +\infty$, and \eqref{localized_elastic_energy_on_good_cuboids} we obtain
\begin{align}\label{eq: main rigidity}
\hspace{-1em}\begin{split}
{{\rm (i)}} & \ \    \int_{P^1_{i,h}}\big|{\rm sym}\big((R^1_{i,h})^T \nabla v^*_h-\mathrm{Id}\big)\big|^2\,\mathrm{d}x 
\leq C_0\eps_{i,h}\,,
\\
{{\rm (ii)}} & \ \    \Big( h^{-2}\int_{P^1_{i,h}}|v^*_h-(R^1_{i,h}x+b^1_{i,h})|^2\,\mathrm{d}x
+\int_{P^1_{i,h}}\big|(R^1_{i,h})^T \nabla  v^*_h-\mathrm{Id}\big|^2\,\mathrm{d}x
\Big) \leq C_0 h^{-2/5} \eps_{i,h}\,,
\end{split}
\end{align}
for a universal constant $C_0>0$.   We now show that
\begin{align}\label{eq: bbb}
|b^1_{i,h}| \le  CM\,,
\end{align}  
 for a constant $C>0$ that depends on $L,T>0$, but is independent of $h>0$. As $\|v_h\|_{L^\infty(\Omega_h)} \leq M$ for some $M \ge 1$, the triangle inequality implies
\begin{align}\label{eq: argu1}
\L^3(P^1_{i,h})|b_{i,h}^{1}|^2 \le  C \int_{P^1_{i,h}}|v_{h}^*(x)-  (R^1_{i,h}x+b^1_{i,h}) |^2\,\mathrm{d}x + C\L^3(P^1_{i,h}) \big( \|v_h\|_{L^\infty(\Omega_h)}^2 + ({\rm diam}(\Omega_h))^2   \big)\,.  
\end{align}
Thus, by  \eqref{localized_elastic_energy_on_good_cuboids}, \eqref{eq: first on domi}, and \eqref{eq: main rigidity}(ii)  we get 
\begin{align}\label{eq: argu2}
|b_{i,h}^{1}|^2 \le Ch^{-3} h^2  h^{-2/5} \eps_{i,h}  +   C(M^2 + C) \le C(M^2 + C)\,, 
\end{align}
 and thus $|b_{i,h}^{1}| \le CM$. After these preparations, we continue with the cases (b) and (c).

\emph{Case ${\rm(b):}$} We first suppose that  $i \in I^h_{\rm b}$. We set $D^3_{i,h}:= P_{i,h}^1$. Then, \eqref{big_volume_of_dominant_set} follows from \eqref{eq: first on domi}--\eqref{eq: first on domiXXX}  and   \eqref{L2_gradient_estimates} follows from \eqref{eq: main rigidity}(ii) by setting $R_{i,h}:= R_{i,h}^1$ and $b_{i,h} := b^1_{i,h}$, where we use $\eps_{i,h}^{1/10} \le C(h^4)^{1/10}$ by \eqref{localized_elastic_energy_on_good_cuboids} and $\limsup_{h \to 0} \epsilon_h h^{-2} < +\infty$. Observe that $|b_{i,h}| \le C M$ by \eqref{eq: bbb}.  

 \emph{Case ${\rm(c):}$} Let us now assume that $i \in I^h_{\rm g}$. We will use Theorem \ref{th: kornSBDsmall} to obtain a Sobolev function which satisfies    \eqref{properties_of_Sobolev_replacement}. First, let us introduce the function  $u_{i,h}\in SBV^2(Q^3_{h,\rho}(i);\R^3)$ by 
\begin{equation}\label{rotated_sbv_displacements}
u_{i,h}(x):=  \chi_{P^1_{i,h}}(x)\big[(R^1_{i,h})^T v_h^*(x)-x-(R^1_{i,h})^T b^1_{i,h}\big]\,,
\end{equation}
and note that by its definition  $J_{u_{i,h}}\subset \partial E^*_h\cap Q^3_{h,\rho}(i)$.    Now,  \eqref{localized_elastic_energy_on_good_cuboids}, \eqref{eq: main rigidity},  \eqref{rotated_sbv_displacements}, as well as $\limsup_{h \to 0} \epsilon_h h^{-2} < +\infty$ imply the bounds 
\begin{align}\label{cor_main rigidity}
\begin{split}
{\rm{(i)}} & \ \ \int_{Q^3_{h,\rho}(i)}|{\rm sym}\big(\nabla u_{i,h}\big)|^2\,\mathrm{d}x 
\leq C\eps_{i,h}\,,
\\
{\rm{(ii)}} & \ \  h^{-2}\int_{Q^3_{h,\rho}(i)} |u_{i,h}|^2\,\mathrm{d}x+\int_{Q^3_{h,\rho}(i)} |\nabla u_{i,h}|^2\,\mathrm{d}x \leq C\eps_{i,h}^{9/10}\,.
\end{split}
\end{align}
By the   scaling invariance   of  Theorem \ref{th: kornSBDsmall} we note that the constant therein %in Theorem \ref{th: kornSBDsmall} 
is given by $ c_T$ appearing in  \eqref{eq: T2}.   Theorem \ref{th: kornSBDsmall}  for the map $u_{i,h}$ and the definition of $I^h_{\rm g}$ in \eqref{good_cuboids}   provide a set of finite perimeter $\omega_{i,h}\subset Q^3_{h,\rho}(i)$ satisfying
\begin{align}\label{set_of_finite_perimeter_control_area_volume}
\L^3(\omega_{i,h})\leq   c_T \big(\mathcal{H}^2(J_{u_{i,h}}) \big)^{3/2} \le  c_{T}\big(\mathcal{H}^2(\partial E^*_h\cap Q^3_{h,\rho}(i))\big)^{3/2}\leq  c_{T}\alpha^{1/2}h\mathcal{H}^2(\partial E^*_h\cap Q^3_{h,\rho}(i))\le  c_{T}\alpha^{3/2}h^3
\end{align}
and a Sobolev map   $\zeta_{i,h}\in W^{1,2}(Q^3_{h,\rho}(i);\R^3)$  such that  
\begin{align}\label{properties_of_Sobolev_replacementXXX}
\begin{split}
{\rm (i)}&\quad \zeta_{i,h}\equiv  u_{i,h} \quad \text{on }\ Q^3_{h,\rho}(i) \setminus  \omega_{i,h}\,,\\
{\rm (ii)} & \quad \Vert {\rm sym}(\nabla  \zeta_{i,h}) \Vert_{L^2(Q^3_{h,\rho}(i) ) } \le c_{T}\EEE \Vert {\rm sym}(\nabla  u_{i,h}) \Vert_{L^2(Q^3_{h,\rho}(i) ) }\,,\\
{\rm (iii)} & \quad \Vert \zeta_{i,h} \Vert_\infty \le \Vert  u_{i,h}  \Vert_\infty\le CM\EEE\,,
\end{split}
\end{align}
where the last estimate in $\rm{(iii)}$ follows from $\Vert v_h^* \Vert_\infty \le M$, \eqref{eq: bbb}, and the definition of $u_{i,h}$ in \eqref{rotated_sbv_displacements}.
In view of  \eqref{eq: T2} and \eqref{set_of_finite_perimeter_control_area_volume}, we get
$$\L^3(\omega_{i,h}) \le \frac{1}{10} Th^3 \le  \frac{1}{10}  \L^3(Q^3_h(i))\,.$$
We define the dominant component 
\begin{align}\label{eq: last dom def}
D^3_{i,h}:= P^1_{i,h} \setminus \omega_{i,h}\,
\end{align}
 and observe by \eqref{eq: first on domi}--\eqref{eq: first on domiXXX} and    \eqref{set_of_finite_perimeter_control_area_volume}  that  \eqref{big_volume_of_dominant_set} holds.

By the classical Korn's inequality  in $W^{1,2}$ we find   $A_{i,h}\in \R^{3\times 3}_{\mathrm{skew}}$  such that 
\begin{equation}\label{Korn_Poincare_inequality_first}
 \int_{Q^3_{h,\rho}(i)}|\nabla \zeta_{i,h}-A_{i,h}|^2 \,\mathrm{d}x
\leq C_{T}\int_{Q^3_{h,\rho}(i)}|\mathrm{sym}(\nabla u_{i,h})|^2 \,\mathrm{d}x \le CC_T \eps_{i,h}\,,
\end{equation}
where we used \eqref{properties_of_Sobolev_replacementXXX}(ii) and the last step follows from \eqref{cor_main rigidity}(i).  Therefore, setting  $$ z_{i,h}:= R^1_{i,h} \zeta_{i,h} + R^1_{i,h}{\rm id} + b_{i,h}^1 \in  W^{1,2}(Q^3_{h,\rho}(i);\R^3)$$ we observe by \eqref{rotated_sbv_displacements}, \eqref{properties_of_Sobolev_replacementXXX}(i), and \eqref{eq: last dom def} that $z_{i,h} \equiv v^*_h$ on $D^3_{i,h}$. This yields \eqref{properties_of_Sobolev_replacement}(i).  Moreover, \eqref{properties_of_Sobolev_replacement}(iii) follows from \eqref{properties_of_Sobolev_replacementXXX}(iii) and \eqref{eq: bbb}.

We proceed to show \eqref{properties_of_Sobolev_replacement}(ii). We start with the observation that      \eqref{Korn_Poincare_inequality_first} implies
\begin{equation}\label{ineq_for_skew_part}
\int_{Q^3_{h,\rho}(i)}|\nabla z_{i,h}-R^1_{i,h}(\mathrm{Id}+A_{i,h})|^2 \,\mathrm{d}x\leq C\eps_{i,h}\,.
\end{equation} 
Now, we need to replace $R^1_{i,h}(\mathrm{Id}+A_{i,h})$ suitably by a rotation. We  claim that there exists $R_{i,h}\in SO(3)$ such that
\begin{equation}\label{optimal_rotation_i}
\L^3(Q^3_{h,\rho}(i))|R^1_{i,h}(\mathrm{Id}+A_{i,h})-R_{i,h}|^2\leq C \eps_{i,h}\,.
\end{equation}
In order to show \eqref{optimal_rotation_i}, we argue as follows.  By \eqref{big_volume_of_dominant_set}  together with \eqref{properties_of_Sobolev_replacementXXX}(i), \eqref{cor_main rigidity}(ii), and \eqref{Korn_Poincare_inequality_first} we get 
\begin{align*}
\tfrac{4}{5} \L^3(Q_{h}(i)) |A_{i,h}|^2&\leq\L^3(D^3_{i,h})| A_{i,h}|^2= \int_{D^3_{i,h}}\big|\nabla u_{i,h}+ A_{i,h}-\nabla \zeta_{i,h}\big|^2\, \mathrm{d}x\\
&\leq 2\Big(\int_{D^3_{i,h}}|\nabla u_{i,h}|^2 \,\mathrm{d}x+\int_{D^3_{i,h}}|\nabla \zeta_{i,h} - A_{i,h}|^2\,\mathrm{d}x\Big)\leq C(\eps_{i,h}+\eps_{i,h}^{9/10})\,.
\end{align*}
By using the fact that $\eps_{i,h}\leq Ch^2 \epsilon_h \le Ch^4$, see \eqref{localized_elastic_energy_on_good_cuboids}, and $ \L^3(Q_{h}(i)) =   Th^3$, we obtain an estimate on $A_{i,h}$, namely 
\begin{equation*}
%\label{skew_suboptimal}
|A_{i,h}|^2\leq Ch^{-3}\eps_{i,h}^{9/10} \le Ch^{-7/5}\eps_{i,h}^{1/2}\,.
\end{equation*}
Therefore, the Taylor expansion (see  \cite[Equation (33)]{friesecke2002theorem})
$${\rm dist}(G,SO(3)) = |{\rm sym}(G) - {\rm Id}| + {\rm O}(|G - {\rm Id}|^2) $$
  allows us to estimate
\begin{align*}
\mathrm{dist}^2\big((\mathrm{Id}+A_{i,h}),SO(3)\big) \leq C|A_{i,h}|^4\leq Ch^{-14/5}\eps_{i,h}\,,
\end{align*}
i.e., there exists indeed $R_{i,h}\in SO(3)$ for which
\begin{equation*}
|R^1_{i,h}(\mathrm{Id}+A_{i,h})- R_{i,h}|^2  \leq Ch^{1/5}h^{-3}\eps_{i,h}\leq Ch^{-3}\eps_{i,h}  \le C (\L^3(Q^3_{h,\rho}(i)))^{-1}  \eps_{i,h}    \,.
\end{equation*}
This proves \eqref{optimal_rotation_i}. Hence, in view of \eqref{ineq_for_skew_part} and\eqref{optimal_rotation_i} we get 
\begin{equation*}
\int_{Q^3_{h,\rho}(i)}|\nabla z_{i,h}- R_{i,h}|^2\,\mathrm{d}x\leq C\eps_{i,h}\,,
\end{equation*}
which yields the second part of \eqref{properties_of_Sobolev_replacement}(ii). Finally, the Poincar\'e  inequality on $W^{1,2}(Q^3_{h,\rho}(i);\R^3)$ also implies that   there exists a vector $b_{i,h} \in \R^3$ such that the rigid motion 
 $r_{i,h}(x):=  R_{i,h}x+ b_{i,h}$ satisfies
\begin{equation*}
h^{-2} \int_{Q^3_{h,\rho}(i)}|z_{i,h}(x)- r_{i,h}(x)|^2\,\mathrm{d}x\leq C\eps_{i,h}\,.
\end{equation*}
This concludes the proof of  \eqref{properties_of_Sobolev_replacement}(ii). Eventually, in the case   $i\in I^h_{\rm g}$, we note  that estimate \eqref{L2_gradient_estimates} is an immediate  consequence of \eqref{properties_of_Sobolev_replacement}. Therefore, by repeating exactly  the argument in \eqref{eq: argu1}--\eqref{eq: argu2} with $b_{i,h}$ in place of $b_{i,h}^1$ we also get that $|b_{i,h}| \le CM$. This concludes the proof.     
\end{proof}

As a consequence, we can estimate the difference of two dominant rigid motions on adjacent cuboids.  

\begin{corollary}[Difference of rigid motions]\label{difference_rigid-motions}
Suppose $i,i+1 \in I^h_{\rm g} \cup I^h_{\rm b}$.  The rigid motions $r_{i,h}$,  $r_{i+1,h}$ given in  Proposition \ref{summary_estimates_proposition} satisfy 
 \begin{equation}\label{general_differences_rotations}
h^{-2}\|r_{i,h}-  r_{i+1,h}\|_{L^{\infty}(Q^3_{h}(i)  \cup Q^3_{h}(i+1) )   }^2+\big|R_{i,h}-R_{i+1,h}\big|^2\leq  Ch^{-3}(\eps_{i,h}^{9/10}+\eps_{i+1,h}^{9/10})\,.
\end{equation}
If $i,i+1 \in I^h_{\rm g}$, the better estimate
  \begin{equation}\label{general_differences_rotations2}
h^{-2}\|r_{i,h}-  r_{i+1,h}\|_{L^{\infty}(Q^3_{h}(i)  \cup Q^3_{h}(i+1) )   }^2+\big|R_{i,h}-R_{i+1,h}\big|^2\leq  Ch^{-3}(\eps_{i,h}+\eps_{i+1,h})
\end{equation}
holds. 
\end{corollary}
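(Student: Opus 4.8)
The plan is to prove \eqref{general_differences_rotations} first and then obtain \eqref{general_differences_rotations2} by using the improved estimate available for good cuboids.

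\textbf{Step 1: Find a common dominant region.} By Proposition~\ref{summary_estimates_proposition} we have dominant components $D^3_{i,h}\subset Q^3_{h,\rho}(i)$ and $D^3_{i+1,h}\subset Q^3_{h,\rho}(i+1)$ with $\L^3(Q^3_h(i)\setminus D^3_{i,h})\le \tfrac15\L^3(Q^3_h(i))$ and similarly for $i+1$. Since the cuboids $Q^3_h(i)$ and $Q^3_h(i+1)$ overlap in the $T$-cuboid $Q_h(i+1)$ (of volume $\tfrac13\L^3(Q^3_h(i))$), the set $D := D^3_{i,h}\cap D^3_{i+1,h}\cap Q_h(i+1)$ satisfies, by a union bound,
\[
\L^3(D) \ge \L^3(Q_h(i+1)) - \L^3(Q^3_h(i)\setminus D^3_{i,h}) - \L^3(Q^3_h(i+1)\setminus D^3_{i+1,h}) \ge \Big(\tfrac13 - \tfrac15 - \tfrac15\Big)\L^3(Q^3_h(i)) > 0\,,
\]
hence $\L^3(D)\ge c\,h^3$ for a dimensional constant $c>0$. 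On $D$ we have both $v_h^*\equiv z_{i,h}$-type approximations; more precisely, by \eqref{L2_gradient_estimates} on $D^3_{i,h}$ and on $D^3_{i+1,h}$ we get
\[
\frac1{h^2}\int_D |r_{i,h}-r_{i+1,h}|^2\,\mathrm dx \le \frac2{h^2}\int_D \big(|v_h^*-r_{i,h}|^2 + |v_h^*-r_{i+1,h}|^2\big)\,\mathrm dx \le C(\eps_{i,h}^{9/10}+\eps_{i+1,h}^{9/10})\,.
\]

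\textbf{Step 2: Apply the affine-map estimate.} The difference $r_{i,h}-r_{i+1,h}$ is an affine map $G\cdot + b$ with $G = R_{i,h}-R_{i+1,h}$ and some $b\in\R^3$. Since $D$ is contained in a ball $B_r(x)$ with $r\sim Th$ and $\L^3(D)\ge c h^3 \ge \delta r^3$ for a suitable $\delta=\delta(T)>0$, Lemma~\ref{lemma: rigid motions} applies: it yields
\[
\|r_{i,h}-r_{i+1,h}\|^2_{L^\infty(B_r(x))} \le C r^{-6}\L^3(D)\,\|r_{i,h}-r_{i+1,h}\|^2_{L^2(D)} \le C h^{-3}\,\|r_{i,h}-r_{i+1,h}\|^2_{L^2(D)}
\]
and analogously $|R_{i,h}-R_{i+1,h}|^2 \le C h^{-3}\|r_{i,h}-r_{i+1,h}\|^2_{L^2(D)}$ (the factor $r^{-8}\L^3(D)\sim h^{-5}$ absorbed into $h^{-3}$ since $\L^3(D)\le h^3$, but here one uses the scaling $\|G\cdot+b\|_{L^2(D)}^2\le \L^\infty\cdot\L^3(D)$ more carefully — in fact one just bounds $|G|^2\le Cr^{-8}\L^3(D)\|r_{i,h}-r_{i+1,h}\|_{L^2(D)}^2$ and notes $r^{-8}\L^3(D) \le C h^{-8}h^3 = Ch^{-5}$, which combined with the $h^2$ prefactor in Step~1 gives the stated $h^{-3}$). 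Combining with Step~1, and noting that $B_r(x)\supset Q^3_h(i)\cup Q^3_h(i+1)$ for $r$ chosen appropriately (comparable to $Th$), we obtain
\[
h^{-2}\|r_{i,h}-r_{i+1,h}\|^2_{L^\infty(Q^3_h(i)\cup Q^3_h(i+1))} + |R_{i,h}-R_{i+1,h}|^2 \le C h^{-2}\cdot h^{-3}\cdot h^2 (\eps_{i,h}^{9/10}+\eps_{i+1,h}^{9/10})\,,
\]
which after bookkeeping of the $h$-powers (using $\L^3(D)\sim h^3$ once more) is exactly \eqref{general_differences_rotations}.

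\textbf{Step 3: The good-cuboid improvement.} If $i,i+1\in I^h_{\rm g}$, then by \eqref{properties_of_Sobolev_replacement}(ii) we may replace the estimate in Step~1 by the sharper bound
\[
\frac1{h^2}\int_D |r_{i,h}-r_{i+1,h}|^2\,\mathrm dx \le \frac2{h^2}\int_{Q^3_{h,\rho}(i)} |z_{i,h}-r_{i,h}|^2\,\mathrm dx + \frac2{h^2}\int_{Q^3_{h,\rho}(i+1)} |z_{i+1,h}-r_{i+1,h}|^2\,\mathrm dx \le C(\eps_{i,h}+\eps_{i+1,h})\,,
\]
where we use that $z_{i,h}\equiv v_h^*$ on $D^3_{i,h}\supset D$ and likewise for $i+1$, so the integrand over $D$ equals $|z_{i,h}-r_{i,h}|^2$ resp. $|z_{i+1,h}-r_{i+1,h}|^2$. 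Repeating Step~2 verbatim with exponent $1$ in place of $9/10$ gives \eqref{general_differences_rotations2}.

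\textbf{Main obstacle.} The delicate point is the volume bookkeeping: one must verify that the common dominant set $D$ has volume bounded below by a fixed fraction of $h^3$ \emph{with constants independent of $h$ and $\rho$}, which relies on the fractions $\tfrac15$ in \eqref{big_volume_of_dominant_set} and on $Q^3_h(i)\cap Q^3_h(i+1) = Q_h(i+1)$ having relative volume $\tfrac13$; then one must track carefully how the powers of $h$ coming from Lemma~\ref{lemma: rigid motions} (which scales with $r^{-3}$ and $r^{-4}$, i.e., $h^{-3}$ and $h^{-4}$) combine with $\L^3(D)^{1/2}\sim h^{3/2}$ and the prefactor $h^{-2}$ to produce precisely the factor $h^{-3}$ on the right-hand side. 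This is routine but must be done with care, since a wrong power of $h$ would make the subsequent limit passage fail.
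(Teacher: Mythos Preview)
Your approach is exactly the one in the paper: compare $v_h^*$ to each rigid motion on the intersection of the dominant components, then invoke Lemma~\ref{lemma: rigid motions}. However, Step~1 contains an arithmetic error that makes the argument, as written, fail.

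You claim that $Q^3_h(i)$ and $Q^3_h(i+1)$ overlap only in the single $T$-cuboid $Q_h(i+1)$, i.e.\ in a set of relative volume $\tfrac13$. This is wrong: since $Q^3_h(i)=Q_h(i-1)\cup Q_h(i)\cup Q_h(i+1)$ and $Q^3_h(i+1)=Q_h(i)\cup Q_h(i+1)\cup Q_h(i+2)$, the overlap is $Q_h(i)\cup Q_h(i+1)$, of volume $2Th^3=\tfrac23\,\L^3(Q^3_h(i))$. With your fraction $\tfrac13$ the union bound gives
\[
\Big(\tfrac13-\tfrac15-\tfrac15\Big)\L^3(Q^3_h(i))=-\tfrac{1}{15}\,\L^3(Q^3_h(i))<0\,,
\]
which is vacuous, not ``$>0$'' as you write. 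With the correct overlap $\tfrac23$ one obtains $\tfrac23-\tfrac25=\tfrac{4}{15}>0$, hence $\L^3(D^3_{i,h}\cap D^3_{i+1,h})\ge \tfrac{4}{5}Th^3$, and the rest of your argument goes through. (There is also no need to further intersect with $Q_h(i+1)$; the paper works directly with $D^3_{i,h}\cap D^3_{i+1,h}$.)

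Your Step~2 reaches the right conclusion but the exposition is muddled. Cleanly: Lemma~\ref{lemma: rigid motions} with $r\sim Th$ and $\L^3(D)\sim h^3$ gives $\|r_{i,h}-r_{i+1,h}\|_{L^\infty}^2\le Ch^{-3}\|r_{i,h}-r_{i+1,h}\|_{L^2(D)}^2$ and $|R_{i,h}-R_{i+1,h}|^2\le Ch^{-5}\|r_{i,h}-r_{i+1,h}\|_{L^2(D)}^2$. Since Step~1 gives $\|r_{i,h}-r_{i+1,h}\|_{L^2(D)}^2\le Ch^2(\eps_{i,h}^{9/10}+\eps_{i+1,h}^{9/10})$, both terms are bounded by $Ch^{-3}(\eps_{i,h}^{9/10}+\eps_{i+1,h}^{9/10})$ as required. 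Step~3 is correct.
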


 \begin{proof}
 By \eqref{L2_gradient_estimates} and the triangle inequality we have 
\begin{align}\label{eq: the diff}
\int_{D^3_{i,h} \cap D^3_{i+1,h}}\big|r_{i,h}-r_{i+1,h}\big|^2\,\mathrm{d}x &\le 2 \int_{D^3_{i,h}}\big|v^*_h(x)-r_{i,h}(x)\big|^2\,\mathrm{d}x + 2\int_{D^3_{i+1,h}}\big|v^*_h(x)-r_{i+1,h}(x)\big|^2\,\mathrm{d}x \notag \\ 
& \le Ch^2(\eps_{i,h}^{9/10} + \eps_{i+1,h}^{9/10})\,.
\end{align}
Note that $\mathcal{L}^3(Q^3_{h}(i)  \cap Q^3_{h}(i+1)) = 2Th^3$ and  $\mathcal{L}^3( Q^3_{h}(j)  \setminus  D^3_{j,h}  ) \le \frac{3}{5}Th^3$ by \eqref{big_volume_of_dominant_set} for $j=i,i+1$. This yields 
$$\mathcal{L}^3(D^3_{i,h} \cap D^3_{i+1,h}) \ge \mathcal{L}^3(Q^3_{h}(i)  \cap Q^3_{h}(i+1)) -\mathcal{L}^3( Q^3_{h}(i)    \setminus  D^3_{i,h}  ) -\mathcal{L}^3( Q^3_{h}(i+1) \setminus  D^3_{i+1,h}  ) \ge  \frac{4}{5}Th^3\,.$$
 Moreover, we  observe that $Q^3_{h}(i)  \cup Q^3_{h}(i+1)$ is contained in a ball of radius $r = cTh$ for a universal constant $c>0$. This along with \eqref{eq: the diff} and Lemma \ref{lemma: rigid motions} shows  \eqref{general_differences_rotations}. Estimate \eqref{general_differences_rotations2} follows in the same fashion noting that with \eqref{properties_of_Sobolev_replacement}(ii) in place of \eqref{L2_gradient_estimates}  the exponents $9/10$ in \eqref{eq: the diff} can be replaced by $1$. 
\end{proof}

\subsection{Construction of \NNN blockwise \EEE Sobolev modifications and proofs of the propositions}\label{sec: global_constructions_proofs}

This subsection is devoted to the construction of $w_h$ and $R_h$, as well as to the proofs of  Proposition~\ref{Lipschitz_replacement} and Proposition \ref{prop: 2nd main}.

We start with the construction of $(w_h)_{h>0}$ and $(R_h)_{h>0}$. To this end, let $\psi^h \in C^\infty(\R^3)$ be a cut-off function satisfying $\psi^h(x) = \psi^h(x_1,0,0)$ for $x \in \R^3$, $0 \le \psi^h \le 1$, $\psi^h \equiv  1$  on $\lbrace x_1 \le -h \rbrace$, and  $\psi^h \equiv  0$  on $\lbrace x_1 \ge h \rbrace$ such that
\begin{align}\label{eq: h onfty}
\Vert \nabla \psi^h \Vert_\infty \le Ch^{-1}\,.
\end{align} Recalling \eqref{overlapping_rectangles}, for each $i=1,\ldots, N-1$, we set $\psi^h_{i,i+1}(x) = \psi^h(x- iTh e_1)$. For $i=1,\ldots,N-1$, we also define the sets 
\begin{align*}
\Psi^h_{i,i+1} :=\begin{cases}
\big((    iTh - h, iTh + h   ) \times \R^2\big) \cap \Omega_{h,\rho} & \text{ if } i,i+1 \in I^h_{\rm g},\\
\emptyset & \text{ else}\,,
\end{cases}
\end{align*}
i.e., $\lbrace \psi^h_{i,i+1} \in (0,1) \rbrace \cap \Omega_{h,\rho} \subset \Psi_{i,i+1}^h$, provided that $i,i+1 \in I^h_{\rm g}$. Note that the sets $(\Psi^h_{i,i+1})_i$ are pairwise disjoint by \eqref{eq: T1}. Moreover, since $ 0< \rho \le 1 -(19/20)^{1/3} \le 0.017$, we get that 
\begin{align}\label{eq: well defined}
 \big(Q_h(i) \cup  \Psi_{i-1,i}^h \cup  \Psi_{i,i+1}^h \big) \cap \Omega_{h,\rho} \subset Q^3_{h,\rho}(i) \quad \text{ for all $i=2,\ldots,N-1$},
 \end{align}
cf.\ \eqref{three_times_the_cuboid-rho}. To see this, by \eqref{eq: T1}, it suffices to note that $(T - \frac{3}{2} T\rho - 1)h \ge C_{\rm iso} (79-120\rho) h >0$.

We now define the sequences $(w_h)_{h>0}$  and $(R_h)_{h>0}$. First, we construct $w_h \in  SBV^2(\Omega_{h,\rho};\R^3)$ as follows. We set
\begin{align}\label{eq: construcc1}
w_h  :=  {\rm id} \quad\ \ \text{ on } \  Q_h(i) \cap \Omega_{h, \rho} \ \text{ for all } i \in I^h_{\rm u}\,,
\end{align}
and
\begin{align}\label{eq: construcc2}
w_h  := r_{i,h} \quad \text{ on }   Q_h(i) \cap \Omega_{h, \rho} \ \text{ for all } i \in I^h_{\rm b}\,,
\end{align}
where $ r_{i,h} $ denotes the rigid motion given in  \eqref{L2_gradient_estimates}. Eventually, recalling the definition of the Sobolev maps  $z_{i,h}\in W^{1,2}(Q^3_{h,\rho}(i);\R^3)$ in Proposition \ref{summary_estimates_proposition}, given $i \in I^h_{\rm g} \subset \lbrace 2,\ldots,N-1\rbrace$, and $x \in Q_h(i) \cap \Omega_{h, \rho}$,  we define
\begin{align}\label{eq: construcc3}
\begin{split}
w_h(x) := \begin{cases}  z_{i,h}(x) & \text{ if } x \in Q_h(i) \setminus (\Psi^h_{i-1,i} \cup \Psi^h_{i,i+1})\,,\\
\psi^h_{i-1,i}(x)z_{i-1,h}(x)  + (1-\psi^h_{i-1,i}(x))z_{i,h}(x)   & \text{ if } x \in Q_h(i) \cap \Psi^h_{i-1,i}\,, \\
\psi^h_{i,i+1}(x)z_{i,h}(x)  +  (1-\psi^h_{i,i+1}(x))z_{i+1,h}(x)  & \text{ if } x \in Q_h(i) \cap \Psi^h_{i,i+1}\,,
\end{cases}
\end{split}
\end{align}
where the second and third part of the definition might be empty if $\Psi^h_{i-1,i}  = \emptyset$ or $\Psi^h_{i,i+1}  = \emptyset$, respectively.  Note that this is well defined by \eqref{eq: well defined}, and the fact that $z_{i-1,h}$ or $z_{i+1,h}$ exist if $\Psi^h_{i-1,i}  \neq \emptyset$ or $\Psi^h_{i,i+1}  \neq \emptyset$, respectively.

In the absence of information on the second derivatives of $(v_h)_{h>0}$, we construct another sequence of functions $(R_h)_{h>0}$ with  $R_h \in SBV^2(\Omega_{h,\rho};\R^{3\times 3})$ which approximate $\nabla v_h$  and whose derivative can be controlled. We define 
\begin{align}\label{eq: construcc4}
R_h  :=  \nabla w_{h} \quad \text{ on }   Q_h(i) \cap \Omega_{h, \rho} \ \text{ for all } i \in I^h_{\rm b} \cup I^h_{\rm u}\,,
\end{align}
and for $x \in Q_h(i) \cap \Omega_{h, \rho}$, $i \in I^h_{\rm g}$, we let 
\begin{align}\label{eq: construcc5}
\begin{split}
R_h(x) := \begin{cases}  R_{i,h}%(x) 
& \text{ if } x \in Q_h(i) \setminus (\Psi^h_{i-1,i} \cup \Psi^h_{i,i+1})\,,\\
\psi^h_{i-1,i}(x) R_{i-1,h}%(x) 
+ (1-\psi^h_{i-1,i}(x)) R_{i,h}%(x)   
& \text{ if } x \in Q_h(i) \cap \Psi^h_{i-1,i}\,, \\
\psi^h_{i,i+1}(x) R_{i,h}%(x)  
+ (1-\psi^h_{i,i+1}(x)) R_{i+1,h}%(x)
& \text{ if } x \in Q_h(i) \cap \Psi^h_{i,i+1}\,,
\end{cases}
\end{split}
\end{align}
where $R_{i,h}$ are given by Proposition \ref{summary_estimates_proposition}.

Note that the construction implies that indeed $w_h \in  SBV^2(\Omega_{h,\rho};\R^3)$, $R_h \in  SBV^2(\Omega_{h,\rho};\R^{3\times 3})$, and the jump sets satisfy
\begin{align}\label{eq: whjum}
J_{w_h} \cup  J_{R_h} \subset \Omega_{h, \rho} \cap \bigcup_{i \in I^h_{\rm b} \cup I^h_{\rm u}} \partial Q_h(i)\,. 
\end{align}
We are now ready to give the proofs of the propositions.  

\begin{proof}[Proof of Proposition \ref{Lipschitz_replacement}]
First,  \eqref{almost_the same_and_control_of_energy}(i) follows from the construction \eqref{eq: construcc1}--\eqref{eq: construcc5}, the uniform control in  \eqref{properties_of_Sobolev_replacement}(iii), the bound $|b_{i,h}|\le CM$ for $i \in I^h_{\rm b}$,  and the fact that $SO(3) \subset \R^{3 \times 3}$ is compact. To see \eqref{almost_the same_and_control_of_energy}(ii), we use \eqref{eq: whjum} and the fact that $\# ( I^h_{\rm b} \cup  I^h_{\rm u} ) \le C$, see \eqref{cardinality_of_bad_cuboids}.

We proceed to show \eqref{almost_the same_and_control_of_energy}(iii),(iv) for $w_h$ and defer the proof for $R_h$ to the end.  Regarding \eqref{almost_the same_and_control_of_energy}(iii), we note that by the definition of $w_h$ and \eqref{properties_of_Sobolev_replacement}(i),
\begin{equation*}
%\label{set_inclusion_where_the_functions_are_equal}
\{w_h \neq  v_h\}\subset  B_h:= \bigcup_{i \in I^h_{\rm b} \cup I^h_{\rm u}} Q^3_{h}(i)  \cup \bigcup_{i\in I^h_{\rm g}} \big(Q^3_{h,\rho}(i)\setminus D^3_{i,h}    \big)  \cup \{v_h \neq  v^*_h\}\,.
\end{equation*}	
Since $\# ( I^h_{\rm b} \cup  I^h_{\rm u} ) \le C$, using also \eqref{uniform_energy_bound}, \eqref{eq: void-new}(ii), and  \eqref{big_volume_of_dominant_set}, we find
\begin{align}\label{measure_estimate_for_replacement}
\begin{split}
\L^3(\{w_h \neq  v_h\})   & \le \L^3(B_h) \leq CTh^3 + Ch\sum_{i\in I^h_{\rm g}} \H^{2}(\partial E_h^*\cap Q^3_{h}(i)) + \L^3(\{v_h \neq  v^*_h\}) \\
&\leq CTh^3+Ch \H^2(\partial E_h^*\cap \Omega_{h}) +  \L^3(\{v_h \neq  v^*_h\})\leq Ch^3 +   \L^3(\{v_h \neq  v^*_h\})\,,
\end{split}
\end{align}	
where  we also used the fact that each cuboid $Q^3_{h}(i)$ overlaps only with   neighboring ones, cf.\ \eqref{ 5number}. This along with \eqref{eq: *mod2} shows \eqref{almost_the same_and_control_of_energy}(iii) for $w_h$. 

Eventually, we show \eqref{almost_the same_and_control_of_energy}(iv) for $w_h$.  First,  by \eqref{eq: construcc1}--\eqref{eq: construcc2} we observe that
\begin{align}\label{eq: partis enough}
 \int_{\Omega_{h,\rho} } 
\mathrm{dist}^2(\nabla w_h,SO(3))\,\mathrm{d}x =  \sum_{i \in I^h_{\rm g}} \int_{ Q_h(i) \cap \Omega_{h,\rho} } 
\mathrm{dist}^2(\nabla w_h,SO(3))\,\mathrm{d}x\,. 
\end{align}
Moreover, by \eqref{eq: construcc3} and  \eqref{properties_of_Sobolev_replacement}(ii) we compute, for $ i \in I^h_{\rm g}$,
\begin{align*}
 \int_{ (Q_h(i) \cap \Omega_{h,\rho}) \setminus (\Psi^h_{i-1,i} \cup \Psi^h_{i,i+1}) } 
\mathrm{dist}^2(\nabla w_h,SO(3))\,\mathrm{d}x & \le  \int_{ Q^3_{h,\rho}(i) } 
\mathrm{dist}^2(\nabla z_{i,h},SO(3))\,\mathrm{d}x  \\
&\le  \int_{ Q^3_{h,\rho}(i) } 
|\nabla z_{i,h} - R_{i,h}|^2 \,\mathrm{d}x  \le C \eps_{i,h} \,. 
\end{align*}
This along with \eqref{localized_elastic_energy_on_good_cuboids} shows
\begin{align}\label{eq: the half}
\sum_{i \in I^h_{\rm g}} \int_{ (Q_h(i) \cap \Omega_{h,\rho}) \setminus (\Psi^h_{i-1,i} \cup \Psi^h_{i,i+1}) } 
\mathrm{dist}^2(\nabla w_h,SO(3))\,\mathrm{d}x \le C\sum_{i \in I^h_{\rm g}}  \eps_{i,h} \le Ch^2 \epsilon_h\,.
\end{align}
For all $\Psi^h_{i,i+1} \neq \emptyset$, i.e., $i,i+1 \in I^h_{\rm g}$, we estimate using \eqref{properties_of_Sobolev_replacement}(ii), \eqref{eq: well defined}, and \eqref{eq: construcc3}
\begin{align}\label{on_V_l_h_rho_i_elastic_energy}
\begin{split}
\hspace{-2em}\int_{ \Psi^h_{i,i+1}} \mathrm{dist}^2(\nabla w_h,SO(3))&= \int_{ \Psi^h_{i,i+1}} \mathrm{dist}^2\big(\nabla \big(z_{i+1,h}+\psi_{i,i+1}^h(z_{i,h}-z_{i+1,h})\big), SO(3)\big)\\
&\leq C\int_{ Q^3_{h,\rho}(i+1) }\mathrm{dist}^2(\nabla z_{i+1,h},SO(3))\\
&\ \ +C\int_{ \Psi^h_{i,i+1}}(|\nabla\psi_{i,i+1}^h|^2|z_{i,h}-z_{i+1,h}|^2+|\psi_{i,i+1}^h|^2|\nabla z_{i,h}-\nabla z_{i+1,h}|^2)\\
&\leq C\eps_{i+1,h}+C\int_{ \Psi^h_{i,i+1}}\big(h^{-2}|z_{i,h}-z_{i+1,h}|^2+|\nabla z_{i,h}-\nabla z_{i+1,h}|^2\big)\,,
\end{split}
\end{align}
where in the last step we used that $0 \le \psi_{i,i+1}^h \le 1$ and $\Vert \nabla \psi_{i,i+1}^h \Vert_{\infty} \le C h^{-1}$, see  \eqref{eq: h onfty}.
Since $\Psi^h_{i,i+1} \subset Q^3_{h,\rho}(i),  Q^3_{h,\rho}(i+1)$ by \eqref{eq: well defined},   we compute by \eqref{properties_of_Sobolev_replacement}(ii), \eqref{general_differences_rotations2}, and the triangle inequality 
\begin{align*}
\int_{ \Psi^h_{i,i+1}} |z_{i,h}-z_{i+1,h}|^2 \, {\rm d}x  &\le  C \int_{ Q^3_{h,\rho}(i)} |z_{i,h}-r_{i,h}|^2 \, {\rm d}x + C\int_{Q^3_{h,\rho}(i+1)} |z_{i+1,h}-r_{i+1,h}|^2 \, {\rm d}x \\ &  \ \ \ \  + C\int_{Q^3_{h,\rho}(i) \cup Q^3_{h,\rho}(i+1)} |r_{i,h}-r_{i+1,h}|^2 \, {\rm d}x  \le Ch^2(\eps_{i,h} + \eps_{i+1,h})\,,
\end{align*}
where we also used that $\mathcal{L}^3(Q^3_{h,\rho}(i) \cup Q^3_{h,\rho}(i+1)) \le CTh^3$. In a similar fashion, \eqref{properties_of_Sobolev_replacement}(ii) and  \eqref{general_differences_rotations2}  also imply 
\begin{align*}
\int_{ \Psi^h_{i,i+1}} |\nabla z_{i,h}- \nabla z_{i+1,h}|^2 \, {\rm d}x  &\le  C \int_{ Q^3_{h,\rho}(i)} |\nabla z_{i,h}-R_{i,h}|^2 \, {\rm d}x + C\int_{Q^3_{h,\rho}(i+1)} |\nabla z_{i+1,h}-R_{i+1,h}|^2 \, {\rm d}x  \\ &  \ \ \ \  + Ch^3|R_{i,h} - R_{i+1,h}|^2    \le C(\eps_{i,h} + \eps_{i+1,h})\,.
\end{align*}
The last two estimates along with \eqref{on_V_l_h_rho_i_elastic_energy} and \eqref{localized_elastic_energy_on_good_cuboids} show
$${\sum_{i,i+1 \in I^h_{\rm g}}\int_{ \Psi^h_{i,i+1}} \mathrm{dist}^2(\nabla w_h,SO(3))  \,\mathrm{d}x \le C \EEE \sum_{i=2}^{N-1} \eps_{i,h} \le C h^2 \epsilon_h\,.}$$
This together with \eqref{eq: partis enough}--\eqref{eq: the half} \EEE concludes the proof of the first inequality in  \eqref{almost_the same_and_control_of_energy}(iv). 

We now continue with the proof of  \eqref{almost_the same_and_control_of_energy}(iii) for $R_h$. By \eqref{measure_estimate_for_replacement} the set $B_h$ satisfies $h^{-2}\L^3(B_h) \to 0 $ as $h \to 0$.    To obtain an estimate on the complement  $\Omega_{h,\rho} \setminus B_h$, we recall the definition of  $w_h$ and $R_h$ in  \eqref{eq: construcc3} and \eqref{eq: construcc5}, respectively. In particular, as $z_{i,h} = z_{i+1,h}$ on $\Psi_{i,i+1}^h \setminus B_h$, see \eqref{properties_of_Sobolev_replacement}(ii),  we have $ \nabla v_h=\nabla w_h =  \nabla z_{i,h} = \nabla z_{i+1,h} =  \psi^h_{i,i+1}\nabla z_{i,h}+(1-\psi^h_{i,i+1})\nabla z_{i+1,h}$ on $\Psi_{i,i+1}^h \setminus B_h$.  Therefore, one can check that 
\begin{align*}
\int_{\Omega_{h,\rho} \setminus B_h} |\nabla v_h - R_h |^2    \, {\rm d} x & = \int_{\Omega_{h,\rho} \setminus B_h} | \nabla w_h - R_h |^2    \, {\rm d} x \le C\sum_{i \in I^h_{\rm g}} \int_{Q^3_{h,\rho}(i)} | \nabla z_{i,h} - R_{i,h} |^2   \, {\rm d} x \le Ch^2\epsilon_h\,,
\end{align*} 
where the last step follows from \eqref{properties_of_Sobolev_replacement}(ii) and  \eqref{localized_elastic_energy_on_good_cuboids}. Let $(\theta_h)_{h>0} \subset (0,+\infty)$ be an infinitesimal sequence such that  $\theta_h \epsilon_h^{-1/2} \to \infty$.  Then, by using  $h^{-2}\L^3(B_h) \to 0 $ we compute 
\begin{align*}
h^{-2} \L^3\big(\Omega_{h,\rho} \cap\big\{|\nabla v_h - R_h |>\theta_h\big\}\big) & \le h^{-2} \L^3\big((\Omega_{h,\rho} \setminus B_h ) \cap\big\{|\nabla v_h - R_h |>\theta_h\big\}\big)  +   h^{-2} \L^3(B_h) \\ 
& \le  h^{-2} \theta_h^{-2} \int_{\Omega_{h,\rho} \setminus B_h} |\nabla v_h - R_h |^2    \, {\rm d} x  +   h^{-2} \L^3(B_h) \\
& \le  C \epsilon_h \theta_h^{-2}    +   h^{-2} \L^3(B_h) \to 0\,.
\end{align*}
 This shows  \eqref{almost_the same_and_control_of_energy}(iii) for $R_h$. 
 
We finally show the second estimate in \eqref{almost_the same_and_control_of_energy}(iv). We observe $\nabla R_h = 0$ on $\Omega_{h,\rho} \setminus \bigcup_{i} \Psi_{i,i+1}^h$ (recall \eqref{eq: construcc4}, \eqref{eq: construcc5}). For all $\Psi^h_{i,i+1} \neq \emptyset$, i.e., $i,i+1 \in I^h_{\rm g}$,  by \eqref{general_differences_rotations2} and the fact that $\L^3(\Psi^h_{i,i+1}) \le 2h^3$, we compute
\begin{align*}
\int_{ \Psi^h_{i,i+1}} |\nabla R_h|^2 \, {\rm d}x  &= \int_{ \Psi^h_{i,i+1}} |\nabla \psi^h_{i,i+1}|^2 | R_{i,h} - R_{i+1,h}|^2 \, {\rm d}x    \\
& \le Ch^{-2}  \int_{ \Psi^h_{i,i+1}}  | R_{i,h} - R_{i+1,h}|^2 \, {\rm d}x  \le Ch^{-2}  (\eps_{i,h}+\eps_{i+1,h})\,,
\end{align*}
 where we again used that  $\Vert \nabla \psi_{i,i+1}^h \Vert \le C h^{-1}$.  Summing over all $i\in I^h_{\rm g}$ and using \eqref{localized_elastic_energy_on_good_cuboids} we conclude 
$$\sum_{i \in I^h_{\rm g}}\int_{ \Psi^h_{i,i+1}} |\nabla R_h|^2 \, {\rm d}x  \le C\epsilon_h\,. $$
This concludes the proof of the second estimate in \eqref{almost_the same_and_control_of_energy}(iv).  
\end{proof}

We close this section with the proof of Proposition \ref{prop: 2nd main}.
\begin{proof}[Proof of Proposition \ref{prop: 2nd main}]  
Fix the \NNN stripe \EEE $S_{h}^{2l}(x) $ as in the statement. We start the proof with the following observation:  Assumption \eqref{eq: small jump/vol} implies that
\begin{align}\label{iii}
i \in I^h_{\rm g} \cup I^h_{\rm b} \ \ \text{ for each  } \ \ i\in I^h  := \lbrace i\colon Q^3_h(i) \cap  S^l_{h,\rho}(x) \neq \emptyset\rbrace\,.
\end{align}
Indeed, since   $l \ge 6Th$, we first get that  $Q^3_h(i) \subset   S^{3l/2}_{h}(x)$ for all $i \in I^h  $, see \eqref{eq: D not}--\eqref{eq: D not2}. Then, in view of  \eqref{eq: small jump/vol}(i), we find  $\H^2\big(\partial E^*_h\cap  Q^3_{h,\rho}(i)) <   2 (1-\rho)^2h^2$ for all $i \in  I^h $. Thus, recalling \eqref{good_cuboids}--\eqref{very_good_mildly_good_cuboids}, to show that $i \in I^h_{\rm g} \cup I^h_{\rm b}$  it suffices to check that 
\begin{align}\label{eq: EEE}
\L^3\big( E^*_h \cap S^{3l/2}_{h}(x) \big) \le 2C_{\rm iso}h^3\,.
\end{align}
Thus, let us check \eqref{eq: EEE}.  
Again using  \eqref{eq: small jump/vol}(i) we can find a partition $S^{2l}_{h}(x) = U_- \cup U_+$ (up to a set of negligible $\L^3$- measure) with disjoint open cuboids $U_-,U_+$ such that 
\begin{align}\label{eq: ole} 
 \H^2\big(  \partial E^*_h \cap U_\pm \big) <  h^2\,.
\end{align}
($\pm$ is a shorthand for $+$ or $-$.) If  $U_\pm \cap  S^{3l/2}_{h%,\rho
}(x) = \emptyset$, the set is irrelevant for showing \eqref{eq: EEE}. Thus, we  suppose that  $U_\pm \cap   S^{3l/2}_{ h%,\rho
}(x) \neq \emptyset$. Then, Proposition \ref{nice_new_isoperimetric_inequality} implies 
$$\min\{\L^3(E_h^* \cap U_\pm),\L^3(U_\pm\setminus E_h^*)\}\leq C_{\rm iso}h \H^{2}(\partial E_h^*\cap U_\pm)\,. $$
(Note that the proposition is applicable as $U_\pm$ contains at least one $T$-cuboid.) By \eqref{eq: ole} we get  
\begin{align}\label{YYY}
\min\{\L^3(E_h^*\cap U_\pm),\L^3(U_\pm\setminus E_h^*)\}\leq C_{\rm iso}h^3\,. 
\end{align}
For $U_\pm \cap   S^{3l/2}_{ h %,\rho
}(x) \neq \emptyset$ we have $\L^3(U_\pm) \ge \frac{1}{8}\L^3(S^{2l}_{h}(x))$. Then, necessarily, $\L^3(E_h^*\cap U_\pm) \le \L^3(U_\pm\setminus E_h^*)$, since otherwise by \eqref{eq: small jump/vol}(ii)
$${\frac{1}{8} \L^3( S^{2l}_{h}(x) )\EEE \le \L^3(U_\pm)  \le \L^3(U_\pm\setminus E_h^*) + \L^3(U_\pm\cap E_h^*) \le C_{\rm iso}h^3 + \frac{1}{9}\L^3(S^{2l}_{h}(x))  \,.} $$
This yields a contradiction, since 
$$\L^3(S^{2l}_{h}(x)) = 4lh^2 \ge 24Th^3 \ge 1920 %960
C_{\rm iso} h^3\,,$$ 
see \eqref{eq: T1}. Thus, using \eqref{YYY} we conclude 
$$\L^3\big(E_h^* \cap   S^{3l/2}_{h\EEE%,\rho\EEE
}(x) \big) \le \L^3\big(E_h^*\cap U_- \cap   S^{3l/2}_{h\EEE%,\rho\EEE
}(x)\big) + \L^3\big(E_h^*\cap U_+ \cap   S^{3l/2}_{h\EEE%,\rho\EEE
}(x)\big) \le 2C_{\rm iso}h^3.$$
This shows \eqref{eq: EEE}, and thus \eqref{iii} holds.

We are now ready to verify \eqref{eq: small jump}. In view of \eqref{iii}, \eqref{eq: whjum} yields that $$J_{w_h} \cap S^l_{h,\rho}(x) \subset \bigcup_{i \in I^h_{\rm b}} \partial Q_h(i)  \cap \Omega_{h,\rho}\,.$$ 
Note that in  each cuboid $  Q_h(i)$, $i \in I^h_{\rm b}$, the trace   ${\rm tr}(w_h)$ on $\partial Q_h(i)  \cap \Omega_{h,\rho}$ coincides with $r_{i,h}$ (recall \eqref{eq: construcc2}). If the neighboring cuboid is good, i.e., $i-1 \in I^h_{\rm g}$ or $i+1 \in I^h_{\rm g}$, by \eqref{properties_of_Sobolev_replacement}(ii),  \eqref{eq: construcc3},  and the trace estimate on $Q_h(j)$ (with its scaling), for $j=i-1,i+1$, the trace ${\rm tr}(w_h)$  satisfies
$${\int_{\partial Q_h(i) \cap \partial Q_h(j)  \cap \Omega_{h,\rho} } |{\rm tr}(w_h) - r_{j,h}|^{2} \, {\rm d}\H^2 \le Ch\int_{Q_h(j)} \big(h^{-2}|w_h - r_{j,h}|^{2} + |\nabla w_h-R_{j,h}|^2\big)\,\mathrm{d}x \leq   Ch\eps_{j,h}\,.}$$
Thus, by a discrete H\"older's inequality we find for each $i \in I^h_{\rm b}$ that 
$${\gamma_i := \sum_{j = i-1,i+1} \int_{\partial Q_h(i) \cap \partial Q_h(j)  \cap \Omega_{h,\rho} } |{\rm tr}(w_h) - r_{j,h}|^{1/2} \, {\rm d}\H^2 \le C(h^2)^{3/4} h^{1/4}\big((\eps_{i-1,h})^{1/4} +  (\eps_{i+1,h})^{1/4} \big)\,.} $$
Now, by \eqref{general_differences_rotations} we compute
\begin{align*} 
\int_{J_{w_h}\cap S^l_{h,\rho}(x)}|[w_h]|^{1/2}\,\mathrm{d}\H^2&\leq  \sum_{i\in I^h_{\rm b}}\int_{\partial Q_h(i) \cap \Omega_{h,\rho}}|(w_h)_+-(w_h)_- |^{1/2}\,\mathrm{d}\H^2\nonumber\\
&\leq C\sum_{i\in I^h_{\rm b}}\big(  h^2  \|r_{i-1,h}-r_{i,h}\|^{1/2}_{L^{\infty}(Q^3_{h}(i))}  +  h^2  \|r_{i,h}-r_{i+1,h}\|^{1/2}_{L^{\infty}(Q^3_{h}(i))}   + \gamma_i\big)    \nonumber\\
&\leq  C 
\sum_{i\in I^h_{\rm b}} \Big(h^{7/4}\big(\eps^{9/40}_{i-1,h} + \eps^{9/40}_{i,h}+\eps^{9/40}_{i+1,h}\big)+h^{7/4}\big((\eps_{i-1,h})^{1/4} +  (\eps_{i+1,h})^{1/4} \big) \Big) \,.
\end{align*}
A discrete H\"older's  inequality along with $\# I^h_{\rm b}\leq C$   (recall \eqref{cardinality_of_bad_cuboids})  and \eqref{localized_elastic_energy_on_good_cuboids} then yields
 \begin{align*}
 {\int_{J_{w_h}\cap S^l_{h,\rho}(x)}|[w_h]|^{1/2}\,\mathrm{d}\H^2 \le  C h^{7/4} \Big( \sum_{i=2}^{N-1} \eps_{i,h} \Big)^{9/40}   \le Ch^{11/5} \epsilon_h^{9/40}\,.}
  \end{align*}
This shows the first part of \eqref{eq: small jump}. For the second part, we compute in a similar fashion, again using \eqref{localized_elastic_energy_on_good_cuboids} and \eqref{general_differences_rotations}, and the construction in \eqref{eq: construcc4}--\eqref{eq: construcc5}
\begin{align*} 
\int_{J_{R_h}\cap S^l_{h,\rho}(x)}|[R_h]|^{1/2}\,\mathrm{d}\H^2&\leq  \sum_{i\in I^h_{\rm b}}\int_{\partial Q_h(i) \cap \Omega_{h,\rho}}|(R_h)_+-(R_h)_- |^{1/2}\,\mathrm{d}\H^2\nonumber\\
&\leq C h^2 \sum_{i\in I^h_{\rm b}}\big(  |R_{i-1,h}-R_{i,h}|^{1/2}  +  |R_{i,h}-R_{i+1,h}|^{1/2} \big)   \nonumber\\
&\leq  C h^{2-3/4}
\sum_{i\in I^h_{\rm b}} \big(\eps^{9/40}_{i-1,h} + \eps^{9/40}_{i,h}+\eps^{9/40}_{i+1,h}\big) \le Ch^{17/10}\epsilon_h^{9/40}\,. 
\end{align*}
This along with   $\limsup_{h \to 0} \epsilon_h h^{-2} < +\infty$ concludes the proof. 
\end{proof}

\begin{remark}[Variant of  Proposition \ref{prop: 2nd main}]\label{rem: the lambda case}
{\normalfont

Let us briefly comment on Remark \ref{rem: main rem}. The proof of (i) basically follows from the previous proof by noting that the assumption \eqref{eq: small jump/vol}(i) with  $1$ in place of $2$ (on the right hand side) excludes the presence of ugly cuboids. For (ii), we also follow the estimates above and observe that, in the worst case $\epsilon_h \sim h^2$, the integral over jump heights $|[w_h]|^{1-\beta}$ and  $|[R_h]|^{1-\beta}$ can be estimated by $h^2 h^{13(1-\beta)/10}$ and $h^2 h^{3(1-\beta)/10}$, respectively. }
\end{remark}

\section{Compactness}\label{compactness}

 This section is devoted to the proof of   Theorem \ref{compactness_thm}. We again use  the continuum  subscript $h>0$ instead of the sequential subscript notation $(h_j)_{j\in \N}$ for convenience. We first recall the relevant result from the Sobolev setting.

\begin{lemma}[Compactness in the Sobolev setting]\label{lemma: mora}
Let $\Omega_{\ell_1,\ell_2} := (0,\ell_1) \times (-\ell_2,\ell_2)^2$ for $\ell_1, \ell_2 >0$, and let $(\tilde{w}_h)_{h >0}$ be a bounded sequence in  $W^{1,2}(\Omega_{\ell_1,\ell_2};\R^3)$ such that 
$$\limsup_{h \to 0} \frac{1}{h^2}\int_{ \Omega_{\ell_1,\ell_2}} {\rm dist}^2( \nabla_h \tilde{w}_h,SO(3)) \, {\rm d} x  + \Vert \tilde{w}_h\Vert_{W^{1,2}(\Omega_{\ell_1,\ell_2})}  \le C_0 < +\infty\,. $$
Then, there exist $\bar{y} \in W^{2,2}(\Omega_{\ell_1,\ell_2};\R^3)$ and $\bar{d_2},\bar{d_3} \in W^{1,2}(\Omega_{\ell_1,\ell_2};\R^3)$, all independent of $(x_2,x_3)$, and  a subsequence (not relabeled) such that
\begin{align}\label{eq: weki}
\tilde{w}_h \rightharpoonup \bar{y} \quad \text{weakly in $W^{1,2}(\Omega_{\ell_1,\ell_2};\R^3)$}, \quad \quad  \nabla_h \tilde{w}_h \to \big(\bar{y}_{,1}\big\vert \, \bar{d_2}\big \vert \, \bar{d_3}\big) \  \text{strongly  in $L^2(\Omega_{\ell_1,\ell_2};\R^{3 \times 3})$}\,.
\end{align}
Moreover,  $(\bar{y}_{,1}\big\vert \, \bar{d_2}\big \vert \, \bar{d_3}) \in SO(3)$ a.e.\ in $\Omega_{\ell_1,\ell_2}$, and
\begin{align}\label{eq: constant bound}
\Vert \bar{y} \Vert_{W^{2,2}(\Omega_{\ell_1,\ell_2})} + \Vert \bar{d_2} \Vert_{W^{1,2}(\Omega_{\ell_1,\ell_2})} + \Vert \bar{d_3} \Vert_{W^{1,2}(\Omega_{\ell_1,\ell_2})}  \le C
\end{align}
for a constant $C>0$ only depending on $C_0$.
\end{lemma}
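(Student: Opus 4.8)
The plan is to recognize this as essentially the classical compactness statement underlying the Friesecke--James--M\"uller dimension reduction from 3D to 1D, and to reduce it to known arguments. I would first apply the geometric rigidity theorem of \cite{friesecke2002theorem} on $\Omega_{\ell_1,\ell_2}$ in a localized form: partition the interval $(0,\ell_1)$ into $\sim h^{-1}$ subintervals of length comparable to $h$, obtaining on each associated cuboid $Q_j$ a rotation $\bar R_j \in SO(3)$ with $\int_{Q_j}|\nabla_h \tilde w_h - \bar R_j|^2 \le C\int_{Q_j}\mathrm{dist}^2(\nabla_h\tilde w_h, SO(3))$. Summing and using the energy bound gives $\int_{\Omega_{\ell_1,\ell_2}}|\nabla_h \tilde w_h - \bar R(x_1)|^2 \le Ch^2$ for a piecewise-constant rotation field $\bar R$, and the standard estimate on differences of rotations on adjacent cuboids (via a trace/Poincar\'e argument) shows that the piecewise-constant interpolant, suitably smoothed, is bounded in $W^{1,2}((0,\ell_1);SO(3))$. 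This is exactly the construction carried out in \cite{Mora} (see \cite[proof of Theorem~3.1]{Mora}), so I would invoke it rather than redo it.

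Next I would extract the limits. From the $W^{1,2}$-bound on $\tilde w_h$ we get, up to a subsequence, $\tilde w_h \rightharpoonup \bar y$ weakly in $W^{1,2}$; since $\tilde w_h$ is (asymptotically) independent of $(x_2,x_3)$ in the sense that $\frac1h\partial_2\tilde w_h, \frac1h\partial_3\tilde w_h$ are $L^2$-bounded, the weak limit $\bar y$ satisfies $\partial_2\bar y = \partial_3 \bar y = 0$, i.e. $\bar y$ depends only on $x_1$. The rescaled gradients $\nabla_h\tilde w_h$ are bounded in $L^2$, hence converge weakly to some $(\bar y_{,1}\vert \bar d_2 \vert \bar d_3)$; the first column is $\bar y_{,1}$ by consistency of the weak limits, and $\bar d_2,\bar d_3$ are independent of $(x_2,x_3)$ because the field $\bar R$ is (one shows $\nabla_h\tilde w_h$ is $L^2$-close to $\bar R$, which is a function of $x_1$ only). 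To upgrade weak to strong $L^2$-convergence of $\nabla_h\tilde w_h$, I would use that $\mathrm{dist}^2(\nabla_h\tilde w_h, SO(3)) \le Ch^2 \to 0$ in $L^1$, so any weak $L^2$-limit lies in $\overline{\mathrm{conv}}\,SO(3)$ pointwise a.e.; combined with the fact that the limit equals $\bar R(x_1)\in SO(3)$ a.e. and that $SO(3)$ consists of extreme points of its convex hull, one gets strong convergence (equivalently, $\|\nabla_h\tilde w_h - \bar R\|_{L^2}^2 \le \|\nabla_h\tilde w_h - \bar R\|_{L^2}\cdot(\text{bounded}) \to 0$ once one knows $\nabla_h\tilde w_h \to \bar R$ in measure, which follows from $\mathrm{dist}(\nabla_h\tilde w_h,SO(3))\to 0$ in $L^2$ plus the rigidity estimate comparing $\nabla_h\tilde w_h$ to the single $x_1$-dependent rotation). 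This then yields $(\bar y_{,1}\vert\bar d_2\vert\bar d_3) = \bar R \in SO(3)$ a.e.

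Finally, the $W^{2,2}$-regularity of $\bar y$ and the $W^{1,2}$-regularity of $\bar d_2,\bar d_3$, together with the bound \eqref{eq: constant bound}, follow from the $W^{1,2}((0,\ell_1))$-bound on the rotation field $\bar R$ established in the first step: indeed $\bar y_{,1} = \bar R e_1$, so $\bar y_{,11} = \bar R_{,1}e_1 \in L^2$, giving $\bar y \in W^{2,2}$, and similarly $\bar d_k = \bar R e_k \in W^{1,2}$ for $k=2,3$; the quantitative bound is inherited from the energy bound $C_0$ through the constants in the rigidity estimate and in the difference-of-rotations estimate, none of which depend on $\ell_1$ once the cuboid length is fixed proportional to $h$. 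The main obstacle, as usual in this circle of ideas, is the $W^{1,2}$-bound on the limiting rotation field, i.e. controlling $\int_{(0,\ell_1)}|\bar R_{,1}|^2$ uniformly in $h$: this requires the careful estimate on $|\bar R_j - \bar R_{j+1}|$ between adjacent cuboids and the observation that $h^{-2}\sum_j |\bar R_j - \bar R_{j+1}|^2 \le C$, which is precisely the mechanism by which the $h^2$-scaling of the 3D energy produces a finite bending-type energy in the limit; since this is done verbatim in \cite{Mora}, I would cite it and only indicate the (routine) adaptation to the present notation with the cuboids $Q_j$ and the rescaled gradients $\nabla_h$.
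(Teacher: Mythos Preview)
Your proposal is correct and takes essentially the same approach as the paper: both defer the proof to \cite{Mora}, with you providing a more detailed sketch of the mechanism (local rigidity on cuboids, difference-of-rotations estimate, extraction of limits) that underlies that reference. Two minor remarks: the precise citation should be \cite[Theorem~2.1]{Mora} rather than Theorem~3.1 (the latter is the $\Gamma$-convergence statement), and for the quantitative bound \eqref{eq: constant bound} the paper notes it follows from the Fr\'echet--Kolmogorov argument in \cite{Mora}, which amounts to the same difference-quotient control you describe via the $W^{1,2}$-bound on the rotation field.
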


For the proof we refer to \cite[Theorem 2.1]{Mora}. The weak convergence to $\bar{y}$ has not been mentioned in the original statement, but follows directly from weak compactness.  A simple argument shows that $\bar{y}$ is indeed independent of $(x_2,x_3)$. In fact,  \eqref{eq: weki} and \eqref{rescaled_deformation_gradient}   yield $\bar{y}_{,i} \equiv 0$ for $i=2,3$.  Property \eqref{eq: constant bound} has not been stated explicitly in \cite[Theorem 2.1]{Mora}, \NNN but is a consequence of the proof of \cite[Theorem 2.1]{Mora} (cf.\ the last estimate therein) and the equivalent characterization of Sobolev spaces via finite differences, see e.g.~\cite[Theorem 1.36]{Dacorogna}. \EEE We now proceed with the proof of our compactness result.

\begin{proof}[Proof of Theorem \ref{compactness_thm}] 
By the energy bound \eqref{uniform_rescaled_energy_bound} and  \eqref{eq: newenergy} we have that
\begin{equation}\label{energy_bound_in_the_order_1_domain}
h^{-2}\int_{\Omega\setminus \overline{V_h}} W(\nabla_hy_h(x))\,\mathrm{d}x+\int_{\partial V_h\cap \Omega}\big|\big(\nu^1_{V_h}(z), h^{-1}\nu^2_{V_h}(z), h^{-1}\nu^3_{V_h}(z)\big)\big|\, \mathrm{d}\H^2(z)\leq C\,,
\end{equation}
where $\nu_{V_h}(z):=(\nu^1_{V_h}(z),\nu^2_{V_h}(z), \nu^3_{V_h}(z))$ denotes the outward pointing unit normal to $\partial V_h\cap \Omega$ at the point $z$. Note that \eqref{energy_bound_in_the_order_1_domain} implies  
$$\sup_{h>0}\big(\L^3(V_h)+\H^2(\partial V_h\cap \Omega)\big)\leq C\,.$$
Therefore, by a compactness result for sets of finite perimeter (see \cite[Theorem 3.39]{Ambrosio-Fusco-Pallara:2000}), there exists $V\in \mathcal{P}(\Omega)$ such that,  up to a non-relabeled subsequence, we have
 \begin{equation*}
 %\label{V_j_converge_to_V}
\chi_{V_h}\to \chi_{V}\ \ \text{in\ } L^1(\Omega)\,.
\end{equation*}
By Reshetnyak's lower semicontinuity theorem (cf.\ \cite[Theorem 2.38]{Ambrosio-Fusco-Pallara:2000}) applied to the lower semicontinuous, positively 1-homogeneous, convex function $\phi:\S^2\to [0,+\infty)$ with $\phi(\nu):=|(0,\nu^2,\nu^3)|$, we get, using again  \eqref{energy_bound_in_the_order_1_domain}, that 
\begin{equation*}%\label{planarity_ineq}
\int_{\partial^*V\cap \Omega} |(0,\nu_{V}^2,\nu_{V}^3)| \,\mathrm{d}\H^2\leq \liminf_{h\to 0}\int_{\partial V_h\cap \Omega}|(0,\nu_{V_h}^2,\nu_{V_h}^3)|\,\mathrm{d}\H^2\leq C\liminf_{h\to 0}h=0\,, 
\end{equation*}
where $\nu_{V}$ denotes the  measure-theoretic outer unit normal to $\partial^* V$. This implies $\nu^2_{V}(x)=\nu^3_{V}(x)=0$ for $\H^2$-a.e.\ $x\in \partial^*V\cap\Omega$. We denote by $I:= \lbrace x_1 \in (0,L)\colon \mathcal{H}^2((\lbrace x_1 \rbrace \times \R^2) \cap V)>0\rbrace$ the measure-theoretic projection of $V$ onto the $x_1$-axis. The previous argument shows that indeed $V=V_I:=I\times (-1/2,1/2)^2$ for some $I\in \mathcal{P}(0,L)$, up to a set of negligible $\mathcal{L}^3$-measure. This proves \eqref{compactness_properties}(i). From now on (and also in the next sections) we will without loss of generality consider the representative in the $\L^1$-equivalence class of $I$ which consists of a finite union of open subintervals of $(0,L)$, without mentioning it further.

We now proceed with the compactness for the deformations. Denote by $(v_h,E_h)_{h>0}$ the sequence related to $(y_h,V_h)_{h>0}$ via \eqref{order_1_domain}--\eqref{from_v_to_y}. We first derive a compactness result for the \NNN blockwise \EEE Sobolev  modifications constructed in Proposition \ref{Lipschitz_replacement} and then we will show \eqref{compactness_properties}(ii),(iii) for the sequence $(y_h)_{h>0}$   afterwards. To this end, we fix $\rho >0$ sufficiently small. We apply Proposition \ref{Lipschitz_replacement} on $(v_h,E_h)_{h>0}$ and $\epsilon_h:= h^2$ to find a sequence  $(w_h)_{h>0}$. (Observe that \eqref{uniform_rescaled_energy_bound} implies \eqref{Lipschitz_replacement_energy_bound}.) Then, we consider the sequence $(\tilde w_h)_{h>0}\subset SBV^2(\Omega_{1,\rho};\R^3)$  defined by 
\begin{equation}\label{tilde_y_h}
\tilde w_h(x):=\tilde w_h(x_1,x_2,x_3):=w_h(x_1,hx_2,hx_3)\,.
\end{equation}
After passing to a subsequence, we may assume that there exists $n \in 2\N$ such that the sets  $\mathcal{Q}_{v_h}$ in  \eqref{almost_the same_and_control_of_energy}(ii) satisfy $\#\mathcal{Q}_{v_h} = n/2$ for all $h >0$. Thus, by \eqref{almost_the same_and_control_of_energy}(ii) we get $(x^h_i)_{i=1}^n \subset (0,L)$ such that
$$J_{w_h} \subset  \Omega_{h,\rho}  \cap   \bigcup_{i=1}^n  \, \big(  \lbrace x^h_i \rbrace \times \R^2\big)\,. $$
Up to a further subsequence, we can suppose that for each $i=1,\ldots,n$ we have $x_i^h \to x_i$ as $h \to 0$ for suitable $x_i \in [0,L]$. Thus, fixing an arbitrary  $\delta>0$ and defining the set 
\begin{equation}\label{Omega_delta_set}
\Omega_\rho^\delta:=\Omega_{1,\rho} \setminus \bigcup_{i=1}^n \, \Big([x_i-\delta,x_i+\delta]\times \R^2\Big)\,,
\end{equation}
we find that
$$\tilde{w}_h|_{\Omega_\rho^\delta} \in W^{1,2}(\Omega_\rho^\delta;\R^3) \quad \text{for all $h>0$ small enough}. $$
A change of variables together with \eqref{almost_the same_and_control_of_energy}(iv) (for $\epsilon_h:=h^2$) imply that
\begin{align}\label{elastic_energy_of_tilde_y_h}
h^{-2}\int_{\Omega_{\rho}^\delta}\mathrm{dist}^2(\nabla_h \tilde w_h,SO(3))\,\mathrm{d}x& \le h^{-4}\int_{\Omega_{h,\rho}}\mathrm{dist}^2(\nabla w_h,SO(3))\,\mathrm{d}x\leq C\,, 
\end{align}
for a constant $C>0$  independent of $h$, $\delta$, and $\rho$. This along with  \eqref{almost_the same_and_control_of_energy}(i)  and \eqref{tilde_y_h}  shows that the sequence $(\tilde w_h)_{h>0}$ is equibounded in $W^{1,2}(\Omega_\rho^\delta;\R^3)$, i.e.,
\begin{align}\label{elastic_energy_of_tilde_y_hXXX}
\Vert \tilde w_h \Vert_{W^{1,2}(\Omega_\rho^\delta)} \le C\,.
\end{align}
Therefore,   by Lemma \ref{lemma: mora} applied to   the sequence $(\tilde w_h)_{h>0}$    on the connected components of the fixed domain $\Omega_{\rho}^\delta$, we obtain a map $y_\rho^\delta\in W^{2,2}(\Omega_\rho^\delta;\R^3)$, and $(d_2)_{\rho}^\delta, (d_3)_{\rho}^\delta\in W^{1,2}(\Omega_\rho^\delta;\R^3)$, all independent of the $(x_2,x_3)$-coordinates, such  that
\begin{equation}\label{W12_convergence_of_y_rho_delta}
\tilde w_{h}\rightharpoonup y_{\rho}^\delta \ \  \text{weakly in } W^{1,2}(\Omega_\rho^\delta;\R^3) \quad  \text{and } \quad \nabla_h\tilde w_{h}\to R_\rho^\delta \ \ \text{strongly in } L^2(\Omega_\rho^\delta;\R^{3\times 3})\,,
\end{equation}
where
\begin{equation}\label{R_rho_delta_def}
R_\rho^\delta:=\big({y_{\rho,1}^\delta}\big\vert \, {(d_2)_{\rho}^\delta}\big \vert \, {(d_3)_{\rho}^\delta}\big)\in SO(3) \text{ a.e. in } \Omega_{\rho}^\delta\,.
\end{equation} 
Moreover, by \eqref{eq: constant bound}, \eqref{elastic_energy_of_tilde_y_h}, and \eqref{elastic_energy_of_tilde_y_hXXX} we have
\begin{align}\label{eq: constant bound2}
\Vert y_\rho^\delta \Vert_{W^{2,2}(\Omega_\rho^\delta)} + \Vert (d_2)_{\rho}^\delta \Vert_{W^{1,2}(\Omega_\rho^\delta)} + \Vert (d_3)_{\rho}^\delta \Vert_{W^{1,2}(\Omega_\rho^\delta)}  \le C
\end{align}
for a constant $C>0$ independent of $\rho$ and $\delta$.

We now replace $\tilde{w}_h$ by $y_h$ in \eqref{W12_convergence_of_y_rho_delta}. By \eqref{from_v_to_y}, \eqref{tilde_y_h}, a scaling argument,   and   \eqref{almost_the same_and_control_of_energy}(iii)  we have that 
\begin{align}\label{measure_of_set_of_difference_in_Omega_delta_rho}
\L^3\big(\{x\in \Omega_{\rho}^\delta\colon y_h(x)\neq \tilde w_h(x)\}\big)& \le  h^{-2} \L^3\big(\{x\in \Omega_{h,\rho}\colon v_h(x)\neq w_h(x)\}\big) \to  0 
\end{align}
as $h \to 0$. Thus, from \eqref{W12_convergence_of_y_rho_delta} and \eqref{measure_of_set_of_difference_in_Omega_delta_rho} we deduce that 
\begin{equation}\label{W12_convergence_of_y_rho_delta2}
y_{h}\to y_{\rho}^\delta \ \  \text{ in measure on } \Omega_\rho^\delta \quad  \text{and } \quad \nabla_h y_{h}\to R_\rho^\delta \ \ \text{ in measure on } \Omega_\rho^\delta\,.
\end{equation}
Now, by  \eqref{R_rho_delta_def}, \eqref{eq: constant bound2}, \eqref{W12_convergence_of_y_rho_delta2}, and a  monotonicity argument for $\rho \to 0$ and $\delta \to 0$ we find  $(y\vert \, d_2\vert \, d_3) \in \NNN(P\text{-}W^{2,2}\times P\text{-}W^{1,2}\times P\text{-}W^{1,2}\EEE)((0,L);\R^{3\times 3}\big)$,  such that the corresponding functions  $\bar y, \bar d_2, \bar d_3  \colon \Omega \to \R^3$ defined in \eqref{eq: convention1}  satisfy 
\begin{align}\label{eq: identi} 
\bar y  = y_{\rho}^\delta  \quad \text{ on $\Omega_\rho^\delta$}, \quad \quad \bar R:=(\bar y_{,1}\vert \, \bar d_2\vert \, \bar d_3) = R_{\rho}^\delta  \quad \text{ on $\Omega_\rho^\delta$}\,,  
\end{align}
and
\begin{equation}\label{W12_convergence_of_y_rho_delta3}
y_{h}\to \bar y  \ \  \text{ in measure on } \Omega \quad  \text{and } \quad \nabla_h y_{h}\to \bar R  \ \ \text{ in measure on } \Omega\,.
\end{equation}
Property \eqref{R_rho_delta_def} also implies that $({\bar y\EEE_{,1}}\big\vert \, {\bar d_2}\big \vert \, {\bar d_3}) \in SO(3)$ a.e.\ in $\Omega$, i.e., $(y\vert \, d_2 \vert \,  d_3)\in SBV^2_{\mathrm{isom}}(0,L)$, see \eqref{SBV_2_isom} and the convention introduced right after it. The measure convergence $y_{h}\to \bar y $ in \eqref{W12_convergence_of_y_rho_delta3} together with  $\|y_h\|_{L^\infty(\Omega)} \le  M$ shows \eqref{compactness_properties}(ii).  By \eqref{eq: nonlinear energy}(iv), \eqref{energy_bound_in_the_order_1_domain}, and the fact that $\nabla_hy_h=\mathrm{Id}$ on $V_h$ we have that
\begin{equation*}
\sup_{h>0} \int_{\Omega}|\nabla_hy_h|^2 \, {\rm d}x <+\infty\,.
\end{equation*}
%This along with  \eqref{rescaled_deformation_gradient} implies that  $\nabla_hy_h$ is bounded in $L^2(\Omega;\R^{3\times 3})$.
A compactness argument and \eqref{W12_convergence_of_y_rho_delta3} show that $\nabla_h y_{h} \rightharpoonup \bar R $ weakly in $L^2(\Omega;\R^{3\times 3})$. Recalling that  $\chi_{V_{h}}\to \chi_{V_I}$ in $L^1(\Omega)$ by \eqref{compactness_properties}(i), the proof of  \eqref{compactness_properties}(iii) is concluded. 

It finally remains to show that $((y\vert \, d_2\vert \, d_3),I)\in \mathcal{A}$. In fact, we have $\|y\|_{L^\infty(\Omega)}\leq M$ by  $\|y_h\|_{L^\infty(\Omega)} \le  M$  for all $ h>0$ and \eqref{W12_convergence_of_y_rho_delta3}.  Moreover, the second part of \eqref{W12_convergence_of_y_rho_delta3}, \eqref{compactness_properties}(i), and the fact that $y=T_h({\rm id})$, $\nabla_h y_{h} = {\rm Id}$ on $V_h$ (see \eqref{admissible_configurations_h_level}) show that  $\bar R\equiv {\rm Id}$ on $V_I$ and thus  $y(x_1) \equiv x_1$ and  $(y_{,1}\vert \, d_2\vert \, d_3) \equiv \mathrm{Id}$ on $I$.  As above we have already seen that $(y\vert \, d_2 \vert \,  d_3)\in SBV^2_{\mathrm{isom}}(0,L)$, the proof is concluded. 
 \end{proof}

\section{The $\Gamma$-liminf inequality}\label{gamma_liminf}

This section is devoted to the proof of the lower bound \eqref{gamma_liminf_inequality} of Theorem \ref{main_gamma_convergence_thm}, which is split into proving the lower bound for the bulk and the surface part of the energy separately.  

Recalling Definition \ref{type_of_convergence}, we consider a sequence $(y_{h}, V_{h})_{h>0}$ and $((y\vert \, d_2\vert \, d_3), I)\in \mathcal{A}$ such that $(y_{h}, V_{h}) \overset{\tau}{\longrightarrow}((y\vert \, d_2\vert \, d_3), I)$, i.e., \eqref{compactness_properties}(i)--(iii) hold true.  

Regarding the lower bound for the elastic energy,  we will use the following result from the purely elastic case. For its formulation, recall the definition of the elastic part of the limiting energy, as introduced in \eqref{limiting_one_dimensional_energy}--\eqref{linear_elasticity_quadratic_form}, and the convention after \eqref{SBV_2_isom}.

\begin{lemma}[Lower bound in the Sobolev setting]\label{lemma: liminf Mora}
Let $\Omega_{\ell_1,\ell_2} := (0,\ell_1) \times (-\ell_2,\ell_2)^2$ for $\ell_1, \ell_2 >0$, and let $(\tilde{w}_h)_{h >0}$ be a  sequence in  $W^{1,2}(\Omega_{\ell_1,\ell_2};\R^3)$ such that 
\begin{align}\label{eq: weki-new}
\tilde{w}_h \rightharpoonup \bar y  \quad \text{weakly in $W^{1,2}(\Omega_{\ell_1,\ell_2};\R^3)$}, \ \  \nabla_h \tilde{w}_h \to  \bar R \EEE =  \big({{\bar y\EEE}_{,1}}\big\vert \, {\bar d}_2\big \vert \, {\bar  d}_3\big) \  \text{strongly  in $L^2(\Omega_{\ell_1,\ell_2};\R^{3 \times 3})$}\,,
\end{align}
where ${\bar y}$, ${\bar d}_2$, and ${\bar d}_3$ are independent of $(x_2,x_3)$. Then, there exists a sequence of piecewise constant functions $\mathcal{R}_h \colon \Omega_{\ell_1,\ell_2} \to SO(3)$ and a limiting function $G \in L^2(\Omega_{\ell_1,\ell_2};\R^{3 \times 3})$ such that
\begin{align}\label{eq: mora-liminfy-eq}
\begin{split}
{\rm (i)} \quad & G_h := \frac{\mathcal{R}_h^T \nabla_h \tilde{w}_h - \mathrm{Id}}{h} \rightharpoonup G \quad \text{weakly in $L^2(\Omega_{\ell_1,\ell_2};\R^{3\times 3})$}\,, \\ 
{\rm (ii)} \quad & \liminf_{h \to 0} \frac{1}{h^2} \int_{\Omega_{\ell_1,\ell_2} } W(\nabla_h \tilde{w}_h) \, {\rm d}x \ge \frac{1}{2} \int_{\Omega_{\ell_1,\ell_2}} \mathcal{Q}_3(G) \, {\rm d}x\,,\\
{\rm (iii)} \quad &  \frac{1}{2} \int_{\Omega_{\ell_1,\ell_2}} \mathcal{Q}_3(G) \, {\rm d}x \ge \frac{1}{2}(2\ell_2)^{4}\int_{(0,\ell_1)}\Q_2({R}^T{R}_{,1})\,\mathrm{d}x_1\,,
\end{split}
\end{align}
where $R$ is %related to $\bar{R}$ 
defined via \eqref{SBV_2_isom}--\eqref{eq: convention2}.
\end{lemma}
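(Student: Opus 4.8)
\textbf{Proof plan for Lemma \ref{lemma: liminf Mora}.}

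The plan is to recognize that this lemma is, up to a cosmetic rescaling of the cross-section, exactly the content of the lower-bound part of the dimension-reduction analysis in \cite{Mora} (namely \cite[Theorem 2.2 and Section 3]{Mora}), and to spell out the small adaptations needed to go from the unit square cross-section to $(-\ell_2,\ell_2)^2$. First I would perform the change of variables $(x_2,x_3) \mapsto (2\ell_2 x_2, 2\ell_2 x_3)$, which maps $\Omega_{\ell_1,\ell_2}$ onto $(0,\ell_1)\times(-\tfrac12,\tfrac12)^2$; under this map the rescaled gradient $\nabla_h$ transforms with an extra factor $2\ell_2$ in the transverse directions, and the convergences in \eqref{eq: weki-new} are preserved (after absorbing $2\ell_2$ into the sequence and the limit appropriately). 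After this reduction one is in the exact setting of \cite{Mora}, and the factor $(2\ell_2)^4$ in \eqref{eq: mora-liminfy-eq}(iii) is precisely the Jacobian-type weight $\int_{(-\ell_2,\ell_2)^2} (x_2^2+x_3^2)\,\mathrm{d}x_2\,\mathrm{d}x_3$-scaling that appears when one passes back from the normalized cross-section to the original one, cf.\ the moment of inertia of the square cross-section in the definition \eqref{def_Q_2} of $\Q_2$.

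The main steps are then the following. \textbf{Step 1 (approximating rotation field).} Using the strong convergence $\nabla_h\tilde w_h \to \bar R$ with $\bar R \in SO(3)$ a.e.\ and $\bar R$ independent of $(x_2,x_3)$, together with the rigidity estimate of \cite{friesecke2002theorem} applied on a grid of cubes of size $h$ along the rod, one constructs the piecewise constant maps $\mathcal R_h\colon \Omega_{\ell_1,\ell_2}\to SO(3)$ with $\|\mathrm{dist}(\mathcal R_h,SO(3))\|=0$, $\|\mathcal R_h - \bar R\|_{L^2}\to 0$, and the key bound $\|\mathcal R_h^T\nabla_h\tilde w_h - \mathrm{Id}\|_{L^2} \le Ch$; this is exactly \cite[Theorem 2.2 and Lemma 3.2]{Mora}. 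Boundedness of $G_h := h^{-1}(\mathcal R_h^T\nabla_h\tilde w_h - \mathrm{Id})$ in $L^2$ then gives a weakly convergent subsequence $G_h \rightharpoonup G$, which is \eqref{eq: mora-liminfy-eq}(i). \textbf{Step 2 (Taylor expansion and $\liminf$ for the bulk).} Expanding $W$ around the identity using \eqref{eq: nonlinear energy}(iii) and frame indifference \eqref{eq: nonlinear energy}(i), writing $W(\nabla_h\tilde w_h) = W(\mathrm{Id} + hG_h) = \tfrac{h^2}{2}\Q_3(G_h) + o(h^2)$ (with the remainder controlled on the set where $|G_h|$ is not too large, the complement having vanishing measure by a Chebyshev argument, exactly as in \cite[Section 3]{Mora}), and using weak lower semicontinuity of the nonnegative quadratic functional $F\mapsto \int\Q_3(F)$, one obtains \eqref{eq: mora-liminfy-eq}(ii). \textbf{Step 3 (identification of the limit and optimization over the cross-section).} One identifies the structure of $G$: its symmetric part is constrained, and minimizing $\int_{\text{cross-section}}\Q_3(\cdot)$ over the admissible transverse dependence — precisely the minimization defining $\Q_2$ in \eqref{def_Q_2} — yields the pointwise (in $x_1$) inequality $\int_{(-\ell_2,\ell_2)^2}\Q_3(G(x_1,x_2,x_3))\,\mathrm{d}x_2\,\mathrm{d}x_3 \ge (2\ell_2)^4\,\Q_2(R^TR_{,1})(x_1)$, where the $(2\ell_2)^4$ comes from rescaling the minimization cell; integrating in $x_1$ gives \eqref{eq: mora-liminfy-eq}(iii). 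This last step is where the role of $R = (\bar y_{,1}|\bar d_2|\bar d_3) \in SO(3)$ enters: one shows $\partial_{x_1}(\mathcal R_h) $ encodes $R^TR_{,1}$ in the limit, so that the relaxed transverse fields $\alpha_{,2},\alpha_{,3}$ in \eqref{def_Q_2} appear naturally from the transverse components of $G$.

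The main obstacle is \textbf{Step 3}, the correct bookkeeping of the scaling factor $(2\ell_2)^4$ and the precise identification of $G$ in terms of $R^TR_{,1}$ and a free transverse corrector; in \cite{Mora} this is done only for $\ell_2 = \tfrac12$, and one must check that the change of variables reproduces both the weight in $\Q_2$'s defining integral (which scales like $\ell_2^2$ from the moment arm $A(0,x_2,x_3)^T$) and the Jacobian of the transverse integration (which scales like $\ell_2^2$), together giving $\ell_2^4$, i.e.\ $(2\ell_2)^4$ after absorbing the normalization. Steps 1 and 2 are routine given \cite{friesecke2002theorem} and \cite{Mora} and require no new ideas beyond localizing the rigidity estimate, which is already standard.
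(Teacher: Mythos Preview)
Your proposal is correct and matches the paper's own treatment: the paper does not give an independent proof but simply cites \cite[Theorem~3.1(i)]{Mora} (specifically equations (3.4)--(3.6) and (3.16) there) for the case $\ell_2=\tfrac12$, and then observes via the scaling identity $\Q_2^{\ell_2}(A)=(2\ell_2)^4\Q_2(A)$ that the result extends to general $\ell_2$. Your Steps~1--3 are an accurate summary of what happens inside \cite{Mora}, and your explanation of the $(2\ell_2)^4$ factor via the change of variables is equivalent to the paper's direct rescaling of the minimization problem defining $\Q_2$.
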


The proof can be found in \cite[Theorem 3.1(i)]{Mora}. In particular, we refer to  \cite[\NNN(3.4)--(3.6), (3.16), and Remark 3.2\EEE]{Mora}. The result is stated there only for cross sections with area $1$, corresponding to $\ell_2= \frac{1}{2}$. However, a standard scaling argument shows that 
\begin{align*}
(2\ell_2)^{4} \Q_2(A) = \Q_2^{\ell_2}(A):= \min_{a\in W^{1,2}\left(\left(-\ell_2,\ell_2\right)^2;\R^3\right)} \int_{\left(-\ell_2,\ell_2\right)^2}\Q_3\left( A \begin{pmatrix}
0 \\
x_{2} \\
x_{3}
\end{pmatrix}\Bigg\vert \, \alpha_{,2}\Bigg \vert \, \alpha_{,3}\right)\,\mathrm{d}x_2\, \mathrm{d}x_3\,,
\end{align*} 
where $\Q_2(A)$ is given by \eqref{def_Q_2}. This implies \eqref{eq: mora-liminfy-eq}(iii) in the present form. 

\NNN The issue in our framework is that Lemma \ref{lemma: liminf Mora} cannot be applied directly since the sequence $(y_h)_{h>0}$ is only in $W^{1,2}(\Omega \setminus \overline{E}_h;\R^3)$ and the geometry of $(E_h)_{h>0}$ cannot be controlled a priori. Therefore, as in the proof of  Theorem \ref{compactness_thm}, we will use the modification $(w_h)_{h>0}$ constructed in Proposition \ref{Lipschitz_replacement}. The advantage here is that, due to \eqref{almost_the same_and_control_of_energy}(ii), the geometry of the jump set of $(w_h)_{h>0}$ is well controlled in the sense that it is   contained in the vertical faces of finitely many cuboids. Therefore, far from these cuboids, we can reduce to the Sobolev setting.  \EEE

\begin{lemma}\label{elastic_lower_bound}
Suppose that  $(y_{h}, V_{h})\overset{\tau}{\longrightarrow}((y\vert \, d_2\vert \, d_3), I)$ for some  $((y\vert \, d_2\vert \, d_3),I)\in \mathcal{A}$. Then, 
\begin{equation}\label{lower_bound}
\liminf_{h\to 0} \Big(h^{-2}\int_{\Omega\setminus \overline{V_h} }W(\nabla_hy_h)\,\mathrm{d}x\Big)\geq 
\frac{1}{2}\int_{(0,L)\setminus I}\Q_2(R^TR_{,1})\,\mathrm{d}x_1\,.
\end{equation} 
\end{lemma}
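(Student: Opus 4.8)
\textbf{Proof proposal for Lemma \ref{elastic_lower_bound}.}

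The plan is to reduce the lower bound for the elastic energy of the non-Sobolev sequence $(y_h)_{h>0}$ to the Sobolev setting handled by Lemma \ref{lemma: liminf Mora}, using the piecewise Sobolev modification from Proposition \ref{Lipschitz_replacement}. First I would fix $\rho>0$ small and apply Proposition \ref{Lipschitz_replacement} to the unscaled pair $(v_h,E_h)$ with $\epsilon_h:=h^2$, obtaining $w_h\in SBV^2(\Omega_{h,\rho};\R^3)$ and the jump bound \eqref{almost_the same_and_control_of_energy}(ii); passing to a subsequence, the finitely many jump planes of $w_h$ have $x_1$-coordinates $x_i^h\to x_i\in[0,L]$, $i=1,\dots,n$. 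Rescaling via $\tilde w_h(x):=w_h(x_1,hx_2,hx_3)$ as in \eqref{tilde_y_h} and removing $\delta$-neighborhoods of the limiting jump planes gives the fixed domain $\Omega_\rho^\delta$ from \eqref{Omega_delta_set}, on which $\tilde w_h\in W^{1,2}(\Omega_\rho^\delta;\R^3)$ with the uniform bounds \eqref{elastic_energy_of_tilde_y_h}--\eqref{elastic_energy_of_tilde_y_hXXX} exactly as in the proof of Theorem \ref{compactness_thm}. Crucially, the compactness proof already identifies the limit: on each connected component of $\Omega_\rho^\delta$, Lemma \ref{lemma: mora} yields $y_\rho^\delta\in W^{2,2}$, $(d_k)_\rho^\delta\in W^{1,2}$ with $\tilde w_h\rightharpoonup y_\rho^\delta$ weakly in $W^{1,2}$ and $\nabla_h\tilde w_h\to R_\rho^\delta$ strongly in $L^2$, and by \eqref{eq: identi} these agree with the restrictions of $\bar y$, $\bar R$ associated to the limit $((y|\,d_2|\,d_3),I)\in\mathcal A$.

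With this in place, I would apply Lemma \ref{lemma: liminf Mora} on each connected component $\Omega'$ of $\Omega_\rho^\delta$ (these are cuboids $(a,b)\times(-\tfrac12,\tfrac12)^2$ after the rescaling, so $\ell_2=\tfrac12$ and the factor $(2\ell_2)^4=1$): the hypotheses \eqref{eq: weki-new} hold by the previous paragraph, so we get
\begin{equation*}
\liminf_{h\to 0}\frac{1}{h^2}\int_{\Omega'}W(\nabla_h\tilde w_h)\,\mathrm dx\ge\frac12\int_{a}^{b}\Q_2(R^TR_{,1})\,\mathrm dx_1\,.
\end{equation*}
Next I relate $\nabla_h\tilde w_h$ on $\Omega_\rho^\delta$ back to $\nabla_h y_h$ on the corresponding subset of $\Omega$: by \eqref{almost_the same_and_control_of_energy}(iii) and a change of variables as in \eqref{measure_of_set_of_difference_in_Omega_delta_rho}, the volume of $\{y_h\neq\tilde w_h\}$ in $\Omega_\rho^\delta$ tends to $0$, and on the coincidence set $W(\nabla_h y_h)=W(\nabla_h\tilde w_h)$; since $W\ge 0$ this gives, for every measurable $A\subset\Omega_\rho^\delta$ that avoids $V_h$ asymptotically,
\begin{equation*}
\liminf_{h\to0}\,h^{-2}\!\int_{\Omega\setminus\overline{V_h}}\!W(\nabla_h y_h)\,\mathrm dx\ \ge\ \liminf_{h\to0}\,h^{-2}\!\int_{A\cap(\Omega\setminus\overline{V_h})}\!W(\nabla_h\tilde w_h)\,\mathrm dx\,,
\end{equation*}
and here one absorbs the thin set $V_h\cap\Omega_\rho^\delta$ using $\L^3(V_h)\to\L^3(V_I)$ and the fact that $V_I=I\times(-\tfrac12,\tfrac12)^2$ is a union of full slices, so on $\Omega_\rho^\delta$ away from a $\delta$-neighborhood of $I$ one has $V_h$ of vanishing measure. (Technically, since $W\ge0$ and only a lower bound is sought, I can simply restrict the integral $\int_{\Omega\setminus\overline{V_h}}W(\nabla_h y_h)$ to a fixed compact subset of $\Omega_\rho^\delta$ disjoint from $V_I$; there $\chi_{\Omega\setminus\overline{V_h}}=1$ for $h$ small.)

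Summing the component-wise lower bounds and using nonnegativity of $\Q_2$, we obtain
\begin{equation*}
\liminf_{h\to0}\,h^{-2}\!\int_{\Omega\setminus\overline{V_h}}\!W(\nabla_h y_h)\,\mathrm dx\ \ge\ \frac12\int_{\Omega_\rho^\delta\,\cap\,((0,L)\setminus N_\delta(I))}\Q_2(R^TR_{,1})\,\mathrm dx_1\,,
\end{equation*}
where $N_\delta(I)$ is the $\delta$-neighborhood of $I$ and the one-dimensional integrand is understood via \eqref{eq: convention2}. Finally I let $\delta\to0$ and then $\rho\to0$: the removed set $\bigcup_i[x_i-\delta,x_i+\delta]\cup N_\delta(I)$ shrinks to the finite set $\{x_i\}\cup\overline I$ minus its interior, which is $\L^1$-null in $(0,L)\setminus I$ once one notes $\Q_2(R^TR_{,1})\in L^1$ by \eqref{eq: constant bound2}, and the domain restriction $\Omega_\rho^\delta$ exhausts $(0,L)$; monotone/dominated convergence then gives the full integral $\frac12\int_{(0,L)\setminus I}\Q_2(R^TR_{,1})\,\mathrm dx_1$, which is \eqref{lower_bound}. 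The main obstacle I anticipate is the bookkeeping needed to make the passage $\delta\to0$, $\rho\to0$ rigorous while keeping the excised neighborhoods of both the (moving) jump planes and the void slices under control — i.e.\ verifying that the one-dimensional elastic integrand is genuinely integrable across $(0,L)\setminus I$ so that removing null sets costs nothing in the limit, and that on the retained region $V_h$ really has asymptotically vanishing measure so the restriction to $\Omega\setminus\overline{V_h}$ is harmless.
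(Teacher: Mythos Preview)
Your overall architecture is right and matches the paper: pass to the piecewise Sobolev modification $\tilde w_h$ via Proposition~\ref{Lipschitz_replacement}, work on the fixed Sobolev domain $\Omega_\rho^\delta$, invoke Lemma~\ref{lemma: liminf Mora}, and let $\delta,\rho\to 0$. However, there is a genuine gap at the point where you ``relate $\nabla_h\tilde w_h$ back to $\nabla_h y_h$''. You correctly observe that
\[
\int_{\Omega\setminus\overline{V_h}} W(\nabla_h y_h)\,\mathrm dx \;\ge\; \int_{\Omega_\rho^\delta\cap\{y_h=\tilde w_h\}} W(\nabla_h\tilde w_h)\,\mathrm dx,
\]
but then you need the liminf of the right-hand side (times $h^{-2}$) to dominate $\tfrac12\int\mathcal Q_2(R^TR_{,1})$. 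Lemma~\ref{lemma: liminf Mora}(ii), as stated, gives the liminf inequality only over the \emph{full} cuboid, not over the $h$-dependent subset $\{y_h=\tilde w_h\}$. To close the gap you would need $h^{-2}\int_{\{y_h\ne\tilde w_h\}}W(\nabla_h\tilde w_h)\to 0$, i.e.\ equi-integrability of $h^{-2}W(\nabla_h\tilde w_h)$; but no upper growth bound on $W$ is assumed, and even the available control $h^{-2}\int\mathrm{dist}^2(\nabla_h\tilde w_h,SO(3))\le C$ from \eqref{almost_the same_and_control_of_energy}(iv) does not prevent concentration of $h^{-2}W(\nabla_h\tilde w_h)$ on the vanishing set $\{y_h\ne\tilde w_h\}$.

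The paper resolves this by \emph{not} applying Lemma~\ref{lemma: liminf Mora}(ii) as a black box. Instead it takes the sequence $G_h$ from Lemma~\ref{lemma: liminf Mora}(i), introduces the truncated coincidence set $\Theta_h=\{y_h=\tilde w_h\}\cap\{|G_h|\le\lambda_h\}$ with $\lambda_h\to\infty$, $h\lambda_h\to 0$, and performs the Taylor expansion $W(\mathrm{Id}+hG_h)=\tfrac{h^2}{2}\mathcal Q_3(G_h)+o(h^2|G_h|^2)$ \emph{after} inserting $\chi_{\Theta_h}$. On $\Theta_h$ the remainder is uniformly $o(1)\cdot|G_h|^2$, and since $\chi_{\Theta_h}\to 1$ boundedly in measure while $G_h\rightharpoonup G$ in $L^2$, convexity of $\mathcal Q_3$ gives $\liminf\int\chi_{\Theta_h}\mathcal Q_3(G_h)\ge\int\mathcal Q_3(G)$, after which Lemma~\ref{lemma: liminf Mora}(iii) applies. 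This linearization-with-cutoff is the missing ingredient in your argument. (A minor side remark: the cross-section of $\Omega_\rho^\delta$ has half-width $\tfrac{1-\rho}{2}$, not $\tfrac12$, so the factor $(2\ell_2)^4=(1-\rho)^4$ appears and is sent to $1$ only at the end.)
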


\begin{proof}
We apply Proposition \ref{Lipschitz_replacement} for $\rho >0$ small and  $\epsilon_h:=h^2$ and the sequence $(v_h, E_h)_{h>0}$ related to $(y_h,V_h)_{h>0}$ via 
\eqref{order_1_domain}--\eqref{from_v_to_y}.  Here, we note that it is not restrictive to assume that $(\E^{h}(y_{h}, V_{h}))_{h>0}$ is bounded, and thus \eqref{Lipschitz_replacement_energy_bound} holds. We denote the resulting sequence by $(w_h)_{h>0}$, and as in the proof of the compactness result, we consider the sequence $(\tilde w_h)_{h>0}\subset SBV^2(\Omega_{1,\rho};\R^3)$  defined by
\begin{equation}\label{eq: rescaliii}
\tilde w_h(x):=\tilde w_h(x_1,x_2,x_3):=w_h(x_1,hx_2,hx_3)\,.
\end{equation}
Similarly to the reasoning in the proof of Theorem \ref{compactness_thm}, see  \eqref{Omega_delta_set}, \eqref{W12_convergence_of_y_rho_delta}, and \eqref{eq: identi}, we can  define a set $\Omega_\rho^\delta$ for $\rho,\delta>0$ with $\mathcal{L}^3(\Omega \setminus \Omega_\rho^\delta) \to 0$ as $\rho,\delta \to 0$ such that
$$\tilde{w}_h|_{\Omega_\rho^\delta} \in W^{1,2}(\Omega_\rho^\delta;\R^3) \quad \text{for all $h>0$ small enough}\,,$$
and 
\begin{equation*}
%\label{W12_convergence_of_y_rho_delta-new}
\tilde w_{h}\rightharpoonup \bar y \ \  \text{weakly in } W^{1,2}(\Omega_\rho^\delta;\R^3) \quad  \text{and } \quad \nabla_h\tilde w_{h}\to \bar R  \ \ \text{strongly in } L^2(\Omega_\rho^\delta;\R^{3\times 3})\,.
\end{equation*} 
This means that \eqref{eq: weki-new} is satisfied  and we can thus apply Lemma \ref {lemma: liminf Mora} on each connected component of $\Omega_\rho^\delta$ to find corresponding $G_h$  and $G$ such that \eqref{eq: mora-liminfy-eq} holds on the set $\Omega_\rho^\delta$. The main part of the proof will consist in confirming that \eqref{eq: mora-liminfy-eq}(ii) also holds with $y_h$ in place of $\tilde{w}_h$. Then, the liminf inequality follows from \eqref{eq: mora-liminfy-eq}(iii).
 
To show \eqref{eq: mora-liminfy-eq}(ii) for $y_h$ in place of $\tilde{w}_h$, we will perform a by now classical linearization argument which we sketch here for convenience:  we consider a sequence of positive numbers $(\lambda_h)_{h>0}\subset (0,\infty)$ with
\begin{equation}\label{lambda_h_sequence}
\lambda_h\to \infty\,,\  h\lambda_h  \to 0 \ \ \text{as } h\to 0\,,
\end{equation}
and define
\begin{equation}\label{set_of_big_gradient}
\Theta_h:= \lbrace x\in \Omega_{\rho}^\delta \colon \tilde{w}_h(x) = y_h(x) \rbrace \cap\{x\in \Omega_{\rho}^\delta\colon |G_h(x)|\leq \lambda_h\}\,.
\end{equation}
Note that $\mathcal{L}^3(\lbrace \tilde{w}_h \neq y_h \rbrace) \to 0$  by \eqref{almost_the same_and_control_of_energy}(iii) and a scaling argument. This together with  the fact that  $\Vert  G_h  \Vert_{L^2(\Omega_{\rho}^\delta)} \le C$, see \eqref{eq: mora-liminfy-eq}(i), $\lambda_h \to + \infty$, and Chebyshev's inequality implies that 
\begin{align}\label{eq: cheby}
\mathcal{L}^3(\Omega_{\rho}^\delta \setminus \Theta_h) \to 0 \quad \text{as $h\to 0$}\,. 
\end{align}
This yields $\chi_{\Theta_h}\to 1$ boundedly in measure in $\Omega_\rho^\delta$ as $h\to 0$. By   \eqref{admissible_configurations_h_level}, $W(\mathrm{Id})=0$, $W \ge 0$, and by the definition of $\Theta_h$ we get 
\begin{align*}
\liminf_{h \to 0} \Big(h^{-2}\int_{\Omega \setminus \overline{V_h}} W(\nabla_hy_h)\, {\rm d}x\Big) & = \liminf_{h \to 0} \Big(h^{-2}\int_{\Omega} W(\nabla_hy_h)\, {\rm d}x\Big)  \\
& \ge  \liminf_{h \to 0} \Big(h^{-2}\int_{\Omega_\rho^\delta} \chi_{\Theta_h} W(\nabla_h \tilde{w}_h)\, {\rm d}x\Big)\,.
\end{align*}
By the regularity and the structural hypotheses on $W$ (recall \eqref{eq: nonlinear energy})  we get 
$$W({\rm Id}+F) = \tfrac{1}{2}\mathcal{Q}_3({\rm sym}(F)) + \Phi(F)\,,$$
where $\Phi\colon\R^{3 \times 3}\to  \R $ is a function  satisfying 
\begin{align}\label{PPPhi}
\sup \big\{ \tfrac{|\Phi(F)|}{|F|^2} \colon \, |F| \le \sigma \big\} \to 0 \quad \text{ as $\sigma \to 0$.}
\end{align} 
Then,  together with the definition of $G_h$ in  \eqref{eq: mora-liminfy-eq}(i) this gives
\begin{align}\label{liminf_first_inequality}
\liminf_{h \to 0} \Big(h^{-2}\int_{\Omega\setminus \overline{V_h}} W(\nabla_hy_h)\, {\rm d}x\Big) & \ge \liminf_{h\to 0}  \Big(h^{-2}\int_{\Omega_\rho^\delta} \chi_{\Theta_h} W({\rm Id} + hG_h) \, {\rm d}x\Big) \nonumber
\\
&\ge   \liminf_{h\to 0} \int_{\Omega_\rho^\delta} \chi_{\Theta_h}\Big( \tfrac{1}{2}\Q_3(\mathrm{sym}(G_h)) +h^{-2} \Phi(hG_h)  \Big) \, {\rm d}x \nonumber
\\
&= \liminf_{h\to 0} \frac{1}{2}\int_{\Omega_\rho^\delta} \chi_{\Theta_h} \Q_3(\mathrm{sym}( G_h)) \,{\rm d}x\,.
\end{align}
 Here, in the last step we used that 
$${\limsup_{h\to 0}  \int_{\Omega_\rho^\delta} \chi_{\Theta_h} h^{-2} |\Phi(hG_h)|  \, {\rm d}x \le \limsup_{h \to 0}\left( \sup\big\{ \tfrac{|\Phi(hG_h)|}{|h G_h|^2} \colon \, |hG_h| \le  h\lambda_h  \big\}   \int_{\Omega_\rho^\delta} \chi_{\Theta_h} |G_h|^2 \, {\rm d}x\right)= 0\,,    }$$
which follows from the fact that $(G_h)_h$ is bounded in $L^2(\Omega_\rho^\delta;\R^{3 \times 3})$,  \eqref{set_of_big_gradient}, \eqref{PPPhi}, and   $h\lambda_h\to 0$ (see  \eqref{lambda_h_sequence}).    Hence,   \eqref{eq: mora-liminfy-eq}(i),(iii), the fact that $\chi_{\Theta_h}\to 1$ boundedly in measure in $\Omega_\rho^\delta$, see \eqref{eq: cheby}, and the convexity of $\Q_3$ imply that 
\begin{equation}\label{liminf_second_inequality}
\liminf_{h\to 0} \frac{1}{2}\int_{\Omega_\rho^\delta} \chi_{\Theta_h}\Q_3(\mathrm{sym}( G_h)) \,{\rm d}x\geq \frac{1}{2}\int_{\Omega_\rho^\delta} \Q_3(  G)\,{\rm d}x\geq \frac{1}{2}(1-\rho)^4\int_{\pi_1(\Omega_\rho^\delta)}\Q_2\big(R^T R_{,1}\big)\,\mathrm{d}x_1\,,\\
\end{equation}
where $\pi_1$ is the projection onto the $x_1$-axis, and $\Omega^\delta_\rho$ is defined in \eqref{Omega_h_local} and \eqref{Omega_delta_set}. As $\mathcal{L}^3(\Omega \setminus \Omega_\rho^\delta) \to 0$ for $\rho,\delta \to 0$, we also get that  $\mathcal{L}^1((0,L) \setminus \pi_1(\Omega_\rho^\delta)) \to 0$ as $\rho,\delta \to 0$. Thus, \eqref{liminf_first_inequality}, \eqref{liminf_second_inequality}, and monotone convergence yield the lower bound \eqref{lower_bound}.  Note that the last integral can also be taken on $(0,L) \setminus I$ only since $\Q_2(0)=0$ and $R = {\rm Id}$ on $I$, see \eqref{limiting_admissible_pairs}, \eqref{SBV_2_isom} and \eqref{def_Q_2}.  This concludes the proof. 
\end{proof}

We now proceed with the lower bound for the surface part of the energy, namely  
\begin{equation}\label{surface_part}
\mathcal{E}^h_{\mathrm{surf}}(V_h):= \mathcal{E}^h(y_h,V_h)-h^{-2}\int_{\Omega\setminus \overline{V_h}}W(\nabla_hy_h) \, \mathrm{d}x=h^{-2}\mathcal{G}^{\kappa_h}_{\mathrm{surf}}(E_h; \Omega_h)\,,
\end{equation}
where we refer to the definitions in  \eqref{rescaled_energy} and  \eqref{F_surf_energy}. Our approach deviates significantly from the proof of lower bounds in relaxation results for energies defined on pairs of functions and sets, cf.\   \cite{BraChaSol07} or \cite{CrismaleFriedrich}. This is mainly due to the fact that the nonlinear geometric rigidity result allows us to control the elastic energy only in a large part of $\Omega \setminus \overline{V_h}$. Our argument to derive the lower bound for the surface energy term related to collapsing voids correctly hinges  on Proposition \ref{prop: 2nd main} along with an argument by contradiction. We again suppose that $I$ is the representative consisting of  a finite union of open  intervals.

\begin{lemma}\label{surface_lower_bound}
Suppose that  $(y_{h}, V_{h})\overset{\tau}{\longrightarrow}((y\vert \, d_2\vert \, d_3), I)$ for some  $((y\vert \, d_2\vert \, d_3),I)\in \mathcal{A}$.  Then,
\begin{equation}\label{surface_part_lower_bound}
\liminf_{h\to 0} \E^h_{\mathrm{surf}}(V_h)\geq \H^0(\partial I\cap (0,L))+2\H^0\big((J_{ y}\cup J_{R})\setminus \partial I\big)\,.
\end{equation} 
\end{lemma}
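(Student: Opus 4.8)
The plan is to prove the lower bound by separating the contributions of the limiting void set $I$ and of the ``extra'' jumps in $(J_y\cup J_R)\setminus\partial I$, and to argue for each point individually. Since $I$ consists of finitely many open intervals, write $\partial I\cap(0,L)=\{a_1,\dots,a_m\}$ and pick finitely many points $\{b_1,\dots,b_n\}\subset (J_y\cup J_R)\setminus\partial I$; it suffices to prove a lower bound of the form $\liminf_{h\to 0}\E^h_{\mathrm{surf}}(V_h)\ge m+2n$ for any such finite selection, and then let $n$ exhaust the (at most countable, but actually finite by Lemma~\ref{elastic_lower_bound} combined with $SBV^2$-regularity of the limit) set of extra jump points. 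Choose $l>0$ small enough that the stripes $S^{2l}_{h}(a_i)$, $S^{2l}_{h}(b_j)$ are pairwise disjoint and compactly contained in $\Omega_h$ (in rescaled variables, that the corresponding intervals $(a_i-2l,a_i+2l)$, $(b_j-2l,b_j+2l)$ are disjoint subintervals of $(0,L)$), and also $l\ge 6Th$ for $h$ small. Recalling $\E^h_{\mathrm{surf}}(V_h)=h^{-2}\mathcal G^{\kappa_h}_{\rm surf}(E_h;\Omega_h)$ and that by \eqref{eq: void-new}(ii) we have $\liminf_h h^{-2}\H^2(\partial E^*_h\cap\Omega_h)\le\liminf_h\E^h_{\mathrm{surf}}(V_h)$, it is enough to bound $\liminf_h h^{-2}\sum_{i}\H^2(\partial E^*_h\cap S^{2l}_h(a_i))$ from below by $m$ and similarly $\liminf_h h^{-2}\sum_j\H^2(\partial E^*_h\cap S^{2l}_h(b_j))$ from below by $2n$, then add, using disjointness of the stripes and the fact that both left-hand sides are part of the same sum over $\partial E_h^*\cap\Omega_h$; here one passes to a subsequence realizing the liminf so that the two partial liminfs can be added.

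\textbf{The void points $a_i$.} At a point $a_i\in\partial I\cap(0,L)$, by \eqref{compactness_properties}(i) we have $\chi_{V_h}\to\chi_{V_I}$ in $L^1$, hence $h^{-2}\L^3(V_h\cap S^{2l}_h(a_i))\to \L^1((a_i-2l,a_i+2l)\cap I)$, which is a number strictly between $0$ and $4l$ since $a_i$ is a boundary point of (the nice representative of) $I$; in particular, for $h$ small, $V_h\cap S^{2l}_h(a_i)$ has volume fraction bounded away from $0$ and from $1$ in each of the two sub-stripes $S^l_h(a_i\pm l)$, say (more precisely: it is substantial on one side and not full on the other, because $a_i$ separates $I$ from its complement locally). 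Then the relative isoperimetric inequality on cuboids, Proposition~\ref{nice_new_isoperimetric_inequality} (applied on $S^{2l}_h(a_i)$ in the rescaled-to-$h$ picture, with $\sigma=h$, $l/\sigma\ge 1$), forces $\H^2(\partial^* V_h\cap S^{2l}_h(a_i))\ge h^2$: indeed if it were $<h^2=\sigma^{d-1}$, then $\min\{\L^3(V_h\cap S^{2l}_h(a_i)),\L^3(S^{2l}_h(a_i)\setminus V_h)\}\le C_{\rm iso}h\,\H^2(\partial^*V_h\cap S^{2l}_h(a_i))=o(h^2\cdot l)$, contradicting the fact that the limiting volume fraction on the $a_i$-side is bounded away from both $0$ and $1$. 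Since $E_h\subset E^*_h$ and $V_h=T_{1/h}(E_h)$, and (rescaling back) $\partial V_h$ corresponds to $\partial E_h$, we get $h^{-2}\H^2(\partial E_h^*\cap S^{2l}_h(a_i))\ge h^{-2}\H^2(\partial E_h\cap S^{2l}_h(a_i))\ge 1-o(1)$, hence in the limit each $a_i$ contributes at least $1$.

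\textbf{The extra jump points $b_j$ --- the main obstacle, via contradiction.} This is where Proposition~\ref{prop: 2nd main} is essential and where the factor $2$ must be produced. At $b_j\notin\partial I$, we have $\L^1((b_j-2l,b_j+2l)\cap I)\to 0$ is false in general --- rather, since $b_j\notin\partial^* I$ and (choosing $l$ small) $(b_j-2l,b_j+2l)$ can be taken to avoid a neighborhood of $I$ except possibly $b_j$ itself lying in the interior or exterior; the relevant point is that $b_j$ is not a boundary point of $I$, so locally either $b_j$ is in the interior of $(0,L)\setminus I$ or of $I$ --- but on $I$ the limit satisfies $y(x_1)=x_1$, $R=\mathrm{Id}$, so $b_j\in J_y\cup J_R$ forces $b_j\notin\overline I$; hence for $l$ small, $\L^3(V_h\cap S^{2l}_h(b_j))/\L^3(S^{2l}_h(b_j))\to 0$, so in particular assumption \eqref{eq: small jump/vol}(ii) of Proposition~\ref{prop: 2nd main} holds for $h$ small. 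The argument is then by contradiction: suppose $\liminf_h h^{-2}\H^2(\partial E^*_h\cap S^{2l}_h(b_j))<2$; passing to a subsequence we may assume $\frac{1}{((1-\rho)h)^2}\H^2(\partial E^*_h\cap S^{2l}_h(b_j))<2$ for all $h$ (after possibly shrinking, the constant $(1-\rho)$ can be absorbed since $\rho$ is sent to $0$ at the end --- more carefully, one first fixes $\rho$ small, proves the bound $\ge 2$ up to $\rho$-errors, then lets $\rho\to0$; the statement of Prop.~\ref{prop: 2nd main} already incorporates the $(1-\rho)$ factor so this is consistent). Then both hypotheses \eqref{eq: small jump/vol}(i),(ii) hold, and Proposition~\ref{prop: 2nd main} gives $h^{-2}\int_{S^l_{h,\rho}(b_j)\cap J_{w_h}}\sqrt{|[w_h]|}\,{\rm d}\H^2+h^{-2}\int_{S^l_{h,\rho}(b_j)\cap J_{R_h}}\sqrt{|[R_h]|}\,{\rm d}\H^2\to 0$. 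Combining this with the control of $w_h,R_h$ on the bulk (Proposition~\ref{Lipschitz_replacement}, in particular \eqref{almost_the same_and_control_of_energy}(iii),(iv) and the rescaling $\tilde w_h(x_1,x_2,x_3)=w_h(x_1,hx_2,hx_3)$ as in the compactness proof, together with a one-dimensional Sobolev/slicing argument on the rescaled stripe $(b_j-l,b_j+l)\times(-1/2,1/2)^2$), the smallness of $\int\sqrt{|[w_h]|}$ and $\int\sqrt{|[R_h]|}$ forces the rescaled traces of $\tilde w_h$ and $\nabla_h\tilde w_h$ to be (asymptotically) continuous across $b_j$ --- a Fubini/slicing argument shows that $\H^2$-control of the square-root of the jump height translates into $L^1$-control of the one-dimensional jump heights on $\H^2$-many slices, which with the $W^{1,2}$ bulk bounds gives strong $L^1$ convergence of $\tilde w_h$ and $\nabla_h\tilde w_h$ on a full neighborhood $(b_j-l,b_j+l)$ with no jump. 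But then the limit $(y|d_2|d_3)$ would be in $W^{1,2}\times W^{1,2}\times W^{1,2}$, hence $R=(y_{,1}|d_2|d_3)\in W^{1,2}$, on $(b_j-l,b_j+l)$, contradicting $b_j\in J_y\cup J_R$. The main obstacle is precisely this last implication: carefully showing that the conclusion \eqref{eq: small jump} of Proposition~\ref{prop: 2nd main}, phrased in terms of $\H^2$-integrals of $\sqrt{|[w_h]|}$ and $\sqrt{|[R_h]|}$ on the three-dimensional jump set, combined with the piecewise-Sobolev structure and the elastic bounds, genuinely excludes a jump of the one-dimensional limit at $b_j$ --- this requires the slicing argument, the observation $J_{w_h}\cup J_{R_h}\subset\bigcup_{Q_h\in\mathcal Q_{v_h}}\partial Q_h$ with $\#\mathcal Q_{v_h}\le C$ (so jumps sit on finitely many cross-sections), and a diagonal/lower-semicontinuity argument upon sending $\rho\to0$ and then $l\to 0$. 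Finally, summing the contributions: $\liminf_h\E^h_{\mathrm{surf}}(V_h)\ge\liminf_h h^{-2}\H^2(\partial E^*_h\cap\Omega_h)\ge \sum_i 1+\sum_j 2=m+2n$, and taking the supremum over all admissible finite selections of points yields \eqref{surface_part_lower_bound}.
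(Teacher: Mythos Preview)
Your overall architecture matches the paper's: localize to disjoint stripes around the finitely many points of $\partial I\cap(0,L)$ and of $(J_y\cup J_R)\setminus\partial I$, pass from $\mathcal G^{\kappa_h}_{\rm surf}(E_h;\Omega_h)$ to $\H^2(\partial E_h^*\cap\Omega_h)$ via \eqref{eq: void-newXXX}, and prove local lower bounds of $1$ and $2$ respectively. However, there is one genuine error and one point where you are vague in a way that hides the key step.

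\textbf{The error at void points.} Your inequality $\H^2(\partial E_h^*\cap S^{2l}_h(a_i))\ge\H^2(\partial E_h\cap S^{2l}_h(a_i))$ is false: $E_h\subset E_h^*$ does \emph{not} imply monotonicity of the perimeter --- indeed the whole purpose of the thickening in Proposition~\ref{prop:setmodification} is to simplify the boundary, and \eqref{eq: partition-new}(ii) only gives an upper bound for $\H^2(\partial E_h^*)$. You can salvage your isoperimetric route by applying Proposition~\ref{nice_new_isoperimetric_inequality} directly to $E_h^*$ rather than $E_h$ (using $h^{-3}\L^3(E_h^*\setminus E_h)\to 0$ to transfer the volume-fraction information). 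The paper instead bypasses this by a cleaner argument: it rescales $E_h^*$ to $V_h^*:=T_{1/h}(E_h^*)$, observes $\chi_{V_h^*}\to\chi_{V_I}$ in $L^1$, and invokes lower semicontinuity of the perimeter to get $\liminf_h\H^2(\partial V_h^*\cap S^{2\delta}_1(a_i))\ge\H^2(\partial V_I\cap S^{2\delta}_1(a_i))\ge 1$, noting that $h^{-2}\H^2(\partial E_h^*\cap S^{2\delta}_h(a_i))\ge\H^2(\partial V_h^*\cap S^{2\delta}_1(a_i))$.

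\textbf{The vagueness at jump points.} Your contradiction strategy for $b_j\in(J_y\cup J_R)\setminus\partial I$ via Proposition~\ref{prop: 2nd main} is exactly the paper's, but your passage from the conclusion \eqref{eq: small jump} to ``the limit has no jump at $b_j$'' is hand-waved as a ``Fubini/slicing argument''. The paper executes this step by Ambrosio's lower semicontinuity theorem in $SBV$: after rescaling $w_h$, $R_h$ to $\tilde w_h$, $\tilde R_h\in SBV^2(\Omega_{1,\rho})$ (which satisfy uniform bounds on $\int|\nabla\tilde w_h|^2+\H^2(J_{\tilde w_h})$ and converge in $L^1$, resp.\ in measure, to $\bar y$, $\bar R$), one obtains directly
\[
\int_{S^{\delta}_{1,\rho}(b_j)\cap J_{\bar y}}\sqrt{|[\bar y]|}\,{\rm d}\H^2\le\liminf_{h\to 0}\int_{S^{\delta}_{1,\rho}(b_j)\cap J_{\tilde w_h}}\sqrt{|[\tilde w_h]|}\,{\rm d}\H^2=0,
\]
and similarly for $\bar R$. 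This immediately rules out a jump of $\bar y$ (resp.\ $\bar R$) on the cross-section through $b_j$, giving the contradiction. Your slicing idea can in principle be made to work, but Ambrosio's theorem is the standard one-line tool here and should replace it. Note also that the paper handles the $(1-\rho)$ factor more carefully than your parenthetical: it assumes $\liminf<2$, picks $\mu<1$ with $h^{-2}\H^2\le 2\mu$ along a subsequence, and then chooses $\rho$ so small that $\mu/(1-\rho)^2<1$, which is exactly what is needed to trigger \eqref{eq: small jump/vol}(i).
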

\begin{proof}

Let $(E_h)_{h>0}$ be the void sets associated to  $(V_h)_{h>0}$ according to \eqref{order_1_domain}. By $(E^*_h)_{h>0}$  we denote the open sets given by Proposition \ref{prop: 2nd main} satisfying $E_{h} \subset E^*_h \subset \Omega_h$ and   \eqref{eq: void-newXXX}. We  also  introduce the rescaled sets 
\begin{equation}\label{enlarged_voids_V_h}
V_h^*:=T_{1/h}(E_h^*)\,,
\end{equation} 
and note by \eqref{eq: void-newXXX}, a scaling argument,  and \eqref{compactness_properties}(i) that
\begin{equation}\label{enlarged_voids_V_h2}
\chi_{V^*_{h}}\longrightarrow \chi_{V_I}\ \text{ in } L^1(\Omega)\,.
\end{equation} 
By \eqref{eq: void-newXXX}  and \eqref{surface_part} we have
\begin{equation}\label{1st_lower_bound_surface}
\liminf_{h \to 0}\E^h_{\mathrm{surf}}(V_h) = \liminf_{h \to 0} h^{-2}\mathcal{G}^{\kappa_h}_{\mathrm{surf}}(E_h; \Omega_h) \geq \liminf_{h \to 0}h^{-2}\H^{2}(\partial E_h^*\cap \Omega_h)\,.
\end{equation}
Since $((y\vert \, d_2\vert \, d_3),I)\in \mathcal{A}$, there exist finitely many $(x_j)_{j=1}^n \subset (0,L)$ such that $$\{x_1,\dots,x_n\}%(x_j)^n_{j=1}  
= (\partial I \cap (0,L)) \cup J_y \cup J_R\,.$$  We choose  $\delta>0$ sufficiently small such that the sets $S^{2\delta}_{h}(x_j)$, $j=1,\ldots,n$, are pairwise disjoint and contained in $\Omega_h$,   cf.\ \eqref{eq: D not}.  Our goal is to prove
\begin{align}\label{eq: maon lsc}
\begin{split}
{\rm (i)} \quad &  \liminf_{h \to 0 }   h^{-2} \H^{2}\big(\partial E_h^*\cap S^{2\delta}_{h}(x_j)\big) \ge 1 \quad \text{ if $x_j \in \partial I \cap (0,L)$}\,,\\
{\rm (ii)} \quad &  \liminf_{h \to 0 }   h^{-2} \H^{2}\big(\partial E_h^*\cap S^{2\delta}_{h}(x_j)\big) \ge 2 \quad \text{ if $x_j \in J_y \setminus \partial I$}\,,\\
{\rm (iii)} \quad &  \liminf_{h \to 0 }   h^{-2} \H^{2}\big(\partial E_h^*\cap S^{2\delta}_{h}(x_j)\big) \ge 2 \quad \text{ if $x_j \in J_R \setminus \partial I$}\,.
\end{split}
\end{align}
Once \eqref{eq: maon lsc} is shown, we can conclude as follows. By \eqref{1st_lower_bound_surface} and the fact that the sets $S^{2\delta}_{h}(x_j) \subset \Omega_h$, $j=1,\ldots,n$, are pairwise disjoint, we get
\begin{align*}
\liminf_{h \to 0}\E^h_{\mathrm{surf}}(V_h) & \ge  \liminf_{h \to 0} \sum_{j=1}^n  h^{-2}\H^{2}(\partial E_h^*\cap S^{2\delta}_{h}(x_j)) \ge \H^0(\partial I \cap (0,L)) + 2  \H^0\big( (J_y \cup J_R) \setminus \partial I \big)\,. 
\end{align*}
This shows \eqref{surface_part_lower_bound}. 

We now proceed with the proof of the properties stated in \eqref{eq: maon lsc}.  We start with (i). By a change of variables and the definition in \eqref{enlarged_voids_V_h} we find  
\begin{align*} 
\liminf_{h \to 0 }   h^{-2} \H^{2}\big(\partial E_h^*\cap S^{2\delta}_{h}(x_j)\big) & = \liminf_{h \to 0}\int_{\partial V^*_h\cap S^{2\delta}_{1}(x_j)}|(\nu^1_{V^*_h},h^{-1}\nu^2_{V^*_h},h^{-1}\nu^3_{V^*_h})| \, {\rm d}\mathcal{H}^2\,
\end{align*}
where we use the notation $\nu_{V^*_h}=(\nu^1_{V^*_h},\nu^2_{V^*_h}, \nu^3_{V^*_h})\in \S^2$ for the outer unit normal  to $V_h^*$.  By \eqref{enlarged_voids_V_h2} and the lower semicontinuity of the perimeter (cf.\ \cite[Proposition 3.38]{Ambrosio-Fusco-Pallara:2000}),
we get
$$ {\liminf_{h \to 0 }   h^{-2} \H^{2}\big(\partial E_h^*\cap  S^{2\delta}_{h}(x_j)\big) \ge \liminf_{h \to 0 }    \H^{2}\big(\partial V_h^*\cap S^{2\delta}_{1}(x_j)\big) \ge  \mathcal{H}^2\big( \partial V_I\cap S^\delta_{1}(x_j)\big)\,. } $$
The fact that $\lbrace x_j \rbrace \times (-\frac{1}{2},\frac{1}{2}) \subset \partial V_I$ yields (i).

We proceed to show  (ii). Suppose the statement was wrong, i.e., there exists $0<\mu <1$ and a subsequence (not relabeled) such that $2 \mu \ge    h^{-2} \H^{2}(\partial E_h^*\cap S^{2\delta}_{h}(x_j))$ for all $h>0$. Choose $\rho >0$ small enough such that $ \frac{\mu}{(1-\rho)^2} <1$. Then, we get
$$ \H^{2}\big(\partial E_h^*\cap S^{2\delta}_{h}(x_j)\big)   \le 2\mu h^2   < 2   ((1-\rho)h)^2 \,. $$
This implies that \eqref{eq: small jump/vol}(i) (for $l=\delta$) holds. The fact that the sets $S^{2\delta}_{1}(x_j)$, $j=1,\ldots,n$, are pairwise disjoint implies that $S^{2\delta}_{1}(x_j) \cap V_I = \emptyset$. Thus, by \eqref{enlarged_voids_V_h2} we get $\lim_{h \to \infty} \mathcal{L}^3(V_h^* \cap S^{2\delta}_{1}(x_j)) = 0$. By the definition of $V_h^*$ and a change of variables we find
$$ \frac{\mathcal{L}^3(E_h^* \cap S^{2\delta}_{h}(x_{j}))}{\mathcal{L}^3(S^{2\delta}_{h}(x_{j}))}  \le \frac{1}{9} $$
for $h>0$ sufficiently small, i.e., \eqref{eq: small jump/vol}(ii) is satisfied. Let $(w_h)_{h>0}$ be the sequence from  Proposition~\ref{Lipschitz_replacement} and let again    $(\tilde w_h)_{h>0}\subset SBV^2(\Omega_{1,\rho};\R^3)$  be the rescaled sequence, see \eqref{eq: rescaliii}. Thus, by a change of variables and by \eqref{eq: small jump}, we find that
\begin{align*}
\int_{S_{1,\rho}^{\delta}(x_j)    \cap  J_{\tilde{w}_h}   }  \sqrt{|[\tilde{w}_h]|}  \, {\rm d} \mathcal{H}^2 &  \le \int_{S_{1,\rho}^{\delta}(x_j)\cap  J_{\tilde{w}_h}}  \sqrt{|[\tilde{w}_h]|} \  |(\nu^1_{\tilde{w}_h},h^{-1}\nu^2_{\tilde{w}_h},h^{-1}\nu^3_{\tilde{w}_h})| \, {\rm d} \mathcal{H}^2  \\
& = \frac{1}{h^2}\int_{S_{h,\rho}^{\delta}(x_j)   \cap  J_{{w}_h}   }  \sqrt{|[{w}_h]|}   \, {\rm d} \mathcal{H}^2  \to 0 \,.
\end{align*}
By \eqref{almost_the same_and_control_of_energy}(ii)--(iv), \eqref{from_v_to_y},  \eqref{compactness_properties}(ii), and\eqref{eq: rescaliii} we get $\tilde{w}_h \to \bar{y}$ in $L^1(\NNN\Omega_{1,\rho}\EEE;\R^3)$ and
\begin{align*}
\sup_{h>0}\Big(\int_{\Omega_{1,\rho}} |\nabla \tilde{w}_h|^2  {\rm d}x + \mathcal{H}^2( J_{\tilde{w}_h})\Big) \le C\,, 
\end{align*}
for a constant $C>0$ independent of $h>0$.  By Ambrosio's lower semicontinuity  theorem in $SBV$ (cf.\ \cite[Theorem 4.7]{Ambrosio-Fusco-Pallara:2000})   and the fact that $\tilde{w}_h \to \bar y$ in $L^1(\Omega;\R^3)$ we get 
$$ \int_{S_{1,\rho}^{\delta}(x_j)   \cap  J_{\bar y}   }  \sqrt{|[\bar y]|} \, {\rm d} \mathcal{H}^2 \le \liminf_{h \to 0} \int_{S_{1,\rho}^{\delta}(x_j) \cap  J_{\tilde{w}_h}   }  \sqrt{|[\tilde{w}_h]|} \, {\rm d} \mathcal{H}^2 = 0\,.  $$
This shows that $J_{\bar y}$ does not jump on $(\lbrace x_j \rbrace \times \R^2) \cap \Omega_{1,\rho}$ which contradicts the fact that $x_j \in J_y$.  

For (iii) we proceed in a similar fashion and first get that  \eqref{eq: small jump/vol} is satisfied. We let $(R_h)_{h>0}$ be the sequence in Proposition \ref{prop: 2nd main}  and introduce the rescaled sequence $(\tilde R_h)_{h>0}\subset SBV^2(\Omega_{1,\rho};\R^{ 3\times 3\EEE})$ by
\begin{equation*}
\tilde R_h(x):=\tilde R_h(x_1,x_2,x_3):=R_h(x_1,hx_2,hx_3)\,.
\end{equation*}
By a  change of variables,  the properties  \eqref{almost_the same_and_control_of_energy}(ii)--(iv), as well as  \eqref{from_v_to_y} and \eqref{rescaled_deformation_gradient} we get
$${\sup_{h>0}\Big(\int_{\Omega_{1,\rho}} |\nabla \tilde{R}_h|^2  {\rm d}x + \mathcal{H}^2( J_{\tilde{R}_h})\Big) \le C,  \quad \quad    |\nabla_h y_h - \tilde{R}_h |  \to 0 \text{ in measure on $\Omega_{1,\rho}$}\,,} $$
for a constant $C>0$ again independent of $h>0$. Then, again by Ambrosio's lower semicontinuity  theorem,  \eqref{eq: small jump},  \eqref{compactness_properties}(iii), and the fact that $S_{1,\rho}^{\delta}(x_j)  \cap V_I = \emptyset$, we derive  
$$ \int_{ S_{1,\rho}^{\delta}(x_j)   \cap  J_{\bar R}   }  \sqrt{|[\bar R]|} \, {\rm d} \mathcal{H}^2 \le \liminf_{h \to 0} \int_{ S_{1,\rho}^{\delta}(x_j) \cap  J_{\tilde{R}_h}   }  \sqrt{|[\tilde{R}_h]|} \, {\rm d} \mathcal{H}^2 = 0\,.  $$
As in (ii), this yields a contradiction, and the proof of (iii) is concluded.
\end{proof}

\section{The $\Gamma$-limsup inequality}\label{gamma_limsup}

In this last section, we construct recovery sequences for admissible limits $((y\vert \, d_2\vert \, d_3), I)\in\mathcal{A}$. We start by recalling the relevant result for elastic rods, using again the convention in \eqref{eq: convention1}--\eqref{eq: convention2}. \EEE 

\begin{lemma}[Recovery sequences in the Sobolev setting]\label{lemma: limsup Mora}
Let $\Omega_{\ell} := (0,\ell) \times (-\frac{1}{2},\frac{1}{2})^2$ for $\ell >0$. Let  $((y\vert \, d_2\vert \, d_3),I)\in \mathcal{A}$ be such that $\bar y|_{\Omega_{\ell}} \in W^{2,2}(\Omega_{\ell};\R^3)$ and $\bar d|_{\Omega_{\ell}}, \bar  d_3|_{\Omega_{\ell}} \in W^{1,2}(\Omega_{\ell};\R^3)$. Then, there exists a sequence $(y_{ h\EEE})_{h >0} \subset W^{1,2}(\Omega_{\ell};\R^3)$ such that
\begin{align}\label{eq: recov1}
y_h \to \bar y\ \ \text{strongly in }   W^{1,2}(\Omega_{\ell};\R^3), \quad \quad   \nabla_h y_{h}  \to (\bar y_{,1}\vert \, \bar d_2\vert \, \bar d_3\EEE) \ \text{strongly  in }  L^2(\Omega_{\ell};\R^{3 \times 3})\,,
\end{align}
and we have
\begin{align}\label{eq: recov2}
\lim_{h \to 0} \frac{1}{h^2} \int_{\Omega_{\ell} } W(\nabla_h y_h) \, {\rm d}x =  \frac{1}{2}\int_{(0,\ell)}\Q_2({R}^T{R}_{,1})\,\mathrm{d}x_1 \,, \end{align}
where $R := (y_{,1}\vert \, d_2\vert \, d_3)$. Moreover, if $ \bar y \EEE \in L^\infty(\Omega_{\ell};\R^3)$, it holds that $\limsup_{h \to \infty} \Vert y_h \Vert_\infty \le \Vert  \bar y \EEE \Vert_\infty$.
\end{lemma}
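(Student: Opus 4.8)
The plan is to recall the recovery sequence construction of Mora--M\"uller \cite[Theorem 3.1(ii)]{Mora} applied on each connected subinterval of $(0,\ell)$ and then to argue that the prescribed $L^\infty$ bound can be enforced by a truncation argument. First I would invoke \cite[Theorem 3.1(ii)]{Mora}: for the given limiting triple $(\bar y\vert\,\bar d_2\vert\,\bar d_3)\in W^{2,2}\times W^{1,2}\times W^{1,2}$ on $\Omega_\ell$ with $R=(\bar y_{,1}\vert\,\bar d_2\vert\,\bar d_3)\in SO(3)$ a.e., this result produces a sequence of rescaled deformations $(y_h)_{h>0}\subset W^{1,2}(\Omega_\ell;\R^3)$ of the standard ansatz form
\begin{equation*}
y_h(x)=\bar y(x_1)+h\big(x_2\bar d_2(x_1)+x_3\bar d_3(x_1)\big)+h^2 \beta_h(x)
\end{equation*}
for a suitable corrector $\beta_h$, satisfying \eqref{eq: recov1} and the energy convergence \eqref{eq: recov2}. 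One should note that the original statement is phrased for cross section $(-\tfrac12,\tfrac12)^2$, which is exactly our situation, so no rescaling of the cross section is needed here; the factor $(2\ell_2)^4$ from Lemma \ref{lemma: liminf Mora} does not appear. The convergences \eqref{eq: recov1} and the energy identity \eqref{eq: recov2} are then immediate from \cite{Mora}.

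The remaining point is the $L^\infty$-bound. Assume $\bar y\in L^\infty(\Omega_\ell;\R^3)$ and set $M_0:=\|\bar y\|_{L^\infty(\Omega_\ell)}$. I would fix $\eps>0$ and apply a truncation: let $T_{M_0+\eps}\colon\R^3\to\R^3$ be the (1-Lipschitz, radial) projection onto the closed ball $\overline{B}_{M_0+\eps}(0)$, and set $\hat y_h:=T_{M_0+\eps}\circ y_h$. Then $\hat y_h\in W^{1,2}(\Omega_\ell;\R^3)$ with $\|\hat y_h\|_{L^\infty}\le M_0+\eps$, and since $T_{M_0+\eps}$ is $1$-Lipschitz we have $|\nabla_h\hat y_h|\le|\nabla_h y_h|$ pointwise; moreover $\hat y_h=y_h$ and $\nabla_h\hat y_h=\nabla_h y_h$ on the set $\{|y_h|<M_0+\eps\}$. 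Since $y_h\to\bar y$ in $W^{1,2}$ (hence in measure) and $|\bar y|\le M_0<M_0+\eps$ a.e., the complementary set $\{|y_h|\ge M_0+\eps\}$ has vanishing measure, and on it $|\nabla_h\hat y_h|^2\le|\nabla_h y_h|^2$ with the right-hand side equi-integrable (it converges strongly in $L^2$ by \eqref{eq: recov1}). Therefore $\hat y_h\to\bar y$ in $W^{1,2}(\Omega_\ell;\R^3)$ and $\nabla_h\hat y_h\to(\bar y_{,1}\vert\,\bar d_2\vert\,\bar d_3)$ strongly in $L^2$, so \eqref{eq: recov1} persists for $\hat y_h$. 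For the energy, frame indifference and the $C^2$-regularity near $SO(3)$ give, on the good set, the same contribution as for $y_h$; on the bad set one uses $W(\nabla_h\hat y_h)\le C\,\mathrm{dist}^2(\nabla_h\hat y_h,SO(3))+C\le C(|\nabla_h y_h|^2+1)$ together with the vanishing measure and equi-integrability of $|\nabla_h y_h|^2$ to conclude that $h^{-2}\int_{\{|y_h|\ge M_0+\eps\}}W(\nabla_h\hat y_h)\,\mathrm{d}x\to 0$ — here we also use $h^{-2}\mathcal L^3(\{|y_h|\ge M_0+\eps\})\to 0$, which follows from Chebyshev's inequality and $\|y_h-\bar y\|_{W^{1,2}}\to0$ being of order $o(1)$, actually one only needs $\mathcal L^3(\{|y_h|\ge M_0+\eps\})=o(h^2)$; if this last decay is not automatic one instead keeps the crude bound and exploits that $W(\nabla_h\hat y_h)=W(\nabla_h y_h)$ on the good set while on the bad set the integrand is controlled by the $L^2$-equi-integrable function $C(|\nabla_h y_h|^2+1)$ times $\chi$ of a null set, which still forces the contribution to vanish since $h^{-2}$ is compensated by nothing — so one argues more carefully via $\int_{A_h}|\nabla_h y_h|^2\to0$ for $|A_h|\to0$ and separately estimates the $h^{-2}\mathcal L^3(A_h)$ term using that $y_h$ is bounded in $L^\infty$ by a constant independent of $h$ plus $o(1)$, hence in fact $A_h=\emptyset$ for $h$ small by uniform convergence on a subsequence. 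Thus $\lim_h h^{-2}\int_{\Omega_\ell}W(\nabla_h\hat y_h)\,\mathrm{d}x=\tfrac12\int_{(0,\ell)}\Q_2(R^TR_{,1})\,\mathrm{d}x_1$.

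Finally I would remove the $\eps$: choosing $\eps=\eps_h\to0$ slowly (a diagonal argument) and running the above with $M_0+\eps_h$ in place of $M_0+\eps$, one obtains a single sequence, still denoted $(y_h)_{h>0}$, satisfying \eqref{eq: recov1}, \eqref{eq: recov2}, and $\limsup_{h\to0}\|y_h\|_{L^\infty(\Omega_\ell)}\le M_0=\|\bar y\|_{L^\infty(\Omega_\ell)}$, as claimed. The main obstacle is the bookkeeping in the truncation step: one must verify that the truncation does not destroy the sharp energy asymptotics, i.e.\ that the elastic energy contribution from the region where truncation is active is $o(1)$ after division by $h^2$. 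This is where the strong $L^2$-convergence of $\nabla_h y_h$ (yielding equi-integrability) and the uniform-in-$h$ $L^\infty$-control are essential; once these are in hand, the argument is routine. Everything else is a direct citation of \cite{Mora}.
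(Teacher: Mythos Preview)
Your citation of \cite[Theorem 3.1(ii)]{Mora} for \eqref{eq: recov1} and \eqref{eq: recov2} is exactly what the paper does; the paper's entire proof is that reference, with no further argument.

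Your truncation route for the $L^\infty$ bound, however, is both unnecessary and does not close. It is unnecessary because the bound is immediate from the ansatz you yourself wrote: $y_h(x)=\bar y(x_1)+h\big(x_2\bar d_2+x_3\bar d_3\big)+h^2\beta_h$ with $|\bar d_i|=1$, $|x_i|\le\tfrac12$, and $\beta_h$ bounded (for smooth data) gives $\|y_h-\bar y\|_{L^\infty}=O(h)\to 0$, hence $\limsup_h\|y_h\|_\infty\le\|\bar y\|_\infty$ directly. The density step in \cite{Mora} proceeds via mollification, which does not increase the $L^\infty$ norm, so the bound survives.

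Your truncation argument has two genuine gaps. First, the estimate $W(\nabla_h\hat y_h)\le C(|\nabla_h y_h|^2+1)$ requires quadratic growth of $W$ from above, which is nowhere assumed in the paper; only coercivity from below, see \eqref{eq: nonlinear energy}(iv), is imposed. Second, even granting such a bound, you need $h^{-2}\mathcal L^3(A_h)\to 0$ with $A_h=\{|y_h|\ge M_0+\eps\}$, and you correctly flag this as the obstacle. Your attempted repair via ``uniform convergence on a subsequence'' fails: $W^{1,2}$ convergence in three dimensions does not embed into $L^\infty$, and a subsequence statement cannot deliver a $\limsup$ bound along the full sequence. The visible hedging in your text (``if this last decay is not automatic\ldots'', ``compensated by nothing'', ``so one argues more carefully'') is the argument telling you it is not closing. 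Drop the truncation entirely and read the $L^\infty$ control off the explicit recovery ansatz.
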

  
For the  proof we refer to \cite[Theorem 3.1(ii)]{Mora}. We now proceed with the construction of recovery sequences.

\begin{proof}[Proof of Theorem \ref{main_gamma_convergence_thm}$\rm(ii)$]
Consider an admissible limit $((y\vert \, d_2\vert \, d_3), I)\in \mathcal{A}$, see \eqref{limiting_admissible_pairs}--\eqref{SBV_2_isom}.   We will combine ideas from  \cite[Section 3]{Mora} and  \cite[Subsection 5.4]{schmidt2017griffith}. We first treat the case $\Vert y \Vert_\infty < M$ and address the changes for $\Vert y \Vert_\infty = M$ at the end of the proof.  By choosing a suitable representative, we can assume that $I$ is the union of   finitely many open subintervals of $(0,L)$. We also denote
\begin{equation}\label{points_of_jump}
(J_{ y}\cup J_{ R})\setminus \partial I:=\{t_1,\dots,t_m\} \subset (0,L) \,,
\end{equation}
where $0<t_1<\dots<t_m<L$. Denote by $(J_i)_i^n$  the connected components of $(0,L)\setminus(I\cup \{t_1,\dots,t_m\})$.

We apply  Lemma \ref{lemma: limsup Mora} on each connected component $J_i$ to find recovery sequences $y_h^i \in W^{1,2}( \tilde{J}_i ; \R^3   )$, where $\tilde{J}_i:= J_i \times (-\frac{1}{2},\frac{1}{2})^2$ such that \eqref{eq: recov1}--\eqref{eq: recov2} are satisfied for the respective functions on the respective sets. For $h>0$ sufficiently small, consider the sets $(V_h)_{h>0}\subset \mathcal{A}_{\rm{reg}}(\Omega)$ defined by
\begin{equation}\label{recovery_sequence_for_voids}
V_h:=\Big(I\cup\bigcup_{i=1}^m(t_i-h,t_i+h)\Big)\times (-\tfrac{1}{2},\tfrac{1}{2})^2\,.
\end{equation}
Recalling \eqref{anisotropic_dilation}, we introduce the deformations $(y_h)_{h>0} \subset SBV^2(\Omega;\R^3)$ defined  by
\begin{equation}\label{recovery_sequence_for_deformations}
y_h(x):= \begin{cases}    y_h^i(x) & \text{ if } x \in \tilde{J}_i \setminus V_h, \\
  T_h(\mathrm{id})     & \text{ if } x \in  V_h\,. 
 \end{cases}
\end{equation}
Since $\Vert y \Vert_\infty < M$, Lemma \ref{lemma: limsup Mora} also implies that $\Vert y_h \Vert_\infty \le M$ for $h>0$ sufficiently small.  This shows that   $(y_h,V_h)_{h>0}\subset\hat{\mathcal{A}}_h$, cf.\ \eqref{admissible_configurations_h_level}. Clearly, in view of \eqref{recovery_sequence_for_voids},  we have $\chi_{V_h} \to \chi_{V_I}$ in $L^1(\Omega)$. Moreover, \eqref{eq: recov1} and  \eqref{recovery_sequence_for_deformations} show that $(y_h)_{h>0}$ also satisfies \eqref{compactness_properties}(ii),(iii).   Thus, by the definition of $\tau$-convergence in Definition \ref{type_of_convergence} we have $(y_{h},V_{h})\overset{\tau}{\longrightarrow}((y\vert \, d_2\vert \, d_3),I)$  as $h\to 0$.

Regarding the elastic part of the energy, from \eqref{eq: recov2} and   \eqref{recovery_sequence_for_deformations} we directly infer
\begin{align}\label{lim_sup_surf_for_smooth-before}
\limsup_{h \to 0} \frac{1}{h^2} \int_{\Omega \setminus \overline{V_h}} W(\nabla_h y_h) \, {\rm d}x  &\le  \lim_{h \to 0} \frac{1}{h^2} \sum_{i=1}^{n} \int_{\tilde{J}_i} W(\nabla_h y_h) \, {\rm d}x  \notag\\ 
& =   \frac{1}{2}  \sum_{i=1}^{n} \int_{J_i}\Q_2({R}^T{R}_{,1})\,\mathrm{d}x_1  =  \frac{1}{2}   \int_{(0,L) \setminus I}\Q_2({R}^T{R}_{,1})\,\mathrm{d}x_1 \,. 
\end{align}

We now address the surface part of the energy introduced in \eqref{surface_part}. First, we  set   $E_h:=T_h(V_h)$, where $(V_h)_{h>0}$ are defined in \eqref{recovery_sequence_for_voids}. By  \eqref{surface_part}, \eqref{F_surf_energy}, and the fact that $\partial E_h\cap \Omega_h$ consists of planar interfaces with unit normal  $\pm e_1$,  we have that
\begin{align}\label{lim_sup_surf_for_smooth}
\begin{split}
\lim_{h\to 0}\E_{\mathrm{surf}}^h(V_h)&=\lim_{h\to 0} h^{-2}\Big(\H^2(\partial E_h\cap \Omega_h)+\kappa_h\int_{\partial E_h\cap \Omega_h}|\bm A_h|^{2}\,\mathrm{d}\mathcal{H}^2\Big)\\
&=\lim_{h\to 0}h^{-2}\H^2\Big(\partial\Big(\Big(I\cup\bigcup_{i=1}^m(t_i-h,t_i+h)\Big)\times (-\tfrac{h}{2},\tfrac{h}{2})^2\Big)\cap \Omega_h \Big)\\&=\H^0(\partial I\cap (0,L))+2m= \H^0(\partial I\cap (0,L))+2\H^0((J_{y}\cup J_{R})\setminus \partial I)\,,
\end{split}
\end{align}
where the last step follows from \eqref{points_of_jump}. Now, \eqref{lim_sup_surf_for_smooth-before} and \eqref{lim_sup_surf_for_smooth} show \eqref{gamma_limsup_inequality} in the case $\|y\|_{L^\infty}<M$.

We conclude the proof by addressing the case $\Vert y \Vert_\infty = M$. In this case, we extend  $y, d_2, d_3$ on $(L,L+1)$ such that $y_{,1}, d_2, d_3$ are constant on  $[L,L+1)$ and  $({y}\vert \,  {d_2}\vert \, {d_3}) \in SBV^2_{\mathrm{isom}}(0,L+1)$. For $0<\sigma<1$, we  consider the functions $y^\sigma(x_1) := \sigma y(x_1/\sigma)$, $d^\sigma_2(x_1):= d_2(x_1 /\sigma)$, and $d^\sigma_3(x_1):= d_3(x_1 /\sigma)$  on $(0,\sigma (L+1))$. Now, $\Vert y^\sigma \Vert_\infty < M$ and we can construct a recovery sequence as above. Moreover, one can check that $\lim_{\sigma \to 0}\E^0((y^\sigma\vert \, d^\sigma_2\vert \, d^\sigma_3), \sigma I) = \E^0((y\vert \, d_2\vert \, d_3), I)$. Thus, the conclusion follows by  a standard diagonal sequence argument in the theory of $\Gamma$-convergence.  
\end{proof}

\section*{Acknowledgements} 
This work was supported by the DFG project FR 4083/3-1 and by the Deutsche Forschungsgemeinschaft (DFG, German Research Foundation) under Germany's Excellence Strategy EXC 2044 -390685587, Mathematics M\"unster: Dynamics--Geometry--Structure. \NNN The research of LK was supported by the DFG through the Emmy Noether Programme (project number 509436910). The authors would like to thank the anonymous referees for their valuable comments and suggestions for the final version of this manuscript. \EEE

\typeout{References}

 \end{document}